\documentclass[12pt]{article}
\usepackage[T1]{fontenc}
\usepackage[utf8]{inputenc}
\usepackage{lmodern}
\usepackage[margin=1in]{geometry}
\usepackage{amsmath,amssymb,amsthm,mathtools}
\usepackage{microtype}
\usepackage{enumitem}
\usepackage{booktabs,longtable}
\usepackage[hidelinks]{hyperref}
\numberwithin{equation}{section}
\newtheorem{theorem}{Theorem}[section]
\newtheorem{maintheorem}{Theorem}

\newtheorem{lemma}[theorem]{Lemma}
\newtheorem{proposition}[theorem]{Proposition}
\newtheorem{corollary}[theorem]{Corollary}
\theoremstyle{definition}
\newtheorem{definition}[theorem]{Definition}
\theoremstyle{remark}
\newtheorem{remark}[theorem]{Remark}
\DeclareMathOperator{\Cov}{Cov}
\DeclareMathOperator{\Var}{Var}
\DeclareMathOperator{\supp}{supp}
\newcommand{\E}{\mathbb E}
\newcommand{\R}{\mathbb R}

\newcommand{\weakstar}{\overset{*}{\rightharpoonup}}
\newcommand{\Cweak}{C_{\mathrm{weak}}}
\allowdisplaybreaks[1]
\hypersetup{pdftitle={Endpoint Compactness and Finite-Energy Weak Solutions to 2D  Compressible Navier-Stokes},pdfauthor={Cheng Yu}}

\title{Endpoint Compactness and Finite-Energy Weak Solutions for the 2D Isothermal Compressible Navier--Stokes}
\author{Cheng Yu\\\small Department of Mathematics, University of Florida}
\date{\today}
\begin{document}
\maketitle
\begin{abstract}

We prove global existence of finite-energy renormalized weak solutions
to the 2D isothermal compressible Navier--Stokes equations in bounded $C^2$ domains with no-slip boundary conditions.
The initial data are arbitrary in the natural energy class, allowing
vacuum and unbounded density. This resolves the isothermal endpoint
$\gamma=1$ of the classical 2D finite-energy renormalized existence
theory for power-law pressures. 
We develop a new endpoint compactness argument based on a
truncation-independent integrable bound for centered density
oscillations. This argument yields strong $L^1$ compactness of the
approximating densities at the natural energy level.

\end{abstract}

\section{Introduction}\label{sec:intro}

Global existence of finite-energy renormalized weak solutions at the 2D isothermal endpoint has remained a long-standing open problem for general natural-energy data.
In this paper, we construct global finite-energy renormalized
weak solutions for large initial data,
including vacuum and unbounded density, to the following
system:
\begin{align}
 \partial_t\rho+\operatorname{div}(\rho u)&=0,\label{eq:mass}\\
 \partial_t(\rho u)+\operatorname{div}(\rho u\otimes u)+\nabla\rho
 &=\operatorname{div}\mathbb S(u),\label{eq:momentum}\\
 \mathbb S(u)&=\mu(\nabla u+\nabla u^{\mathsf T})
                  +\lambda\operatorname{div}u\,I,\label{eq:stress}
\end{align}
in a bounded domain $\Omega\subset\mathbb R^2$ with $C^2$ boundary.
Here $\rho$ is the density and $u$ is the velocity. We impose the
no-slip boundary condition and assume that the constant viscosity
coefficients satisfy
\begin{equation}\label{eq:viscosity}
 u|_{\partial\Omega}=0,\qquad \mu>0,\qquad \mu+\lambda\ge0.
\end{equation}
The isothermal pressure law is $p(\rho)=\rho$, with its coefficient
normalized to one. The corresponding nonnegative internal energy is
\begin{equation}\label{eq:H}
 H(r)=r\log r-r+1\quad (r>0),\qquad H(0)=1.
\end{equation}

The initial density and momentum are prescribed by
\begin{equation}\label{intro:initial-data}
 \rho(0,\cdot)=\rho_0,\qquad
 (\rho u)(0,\cdot)=m_0
 \quad\text{in }\Omega,
\end{equation}
and satisfy only the natural finite-energy assumptions
\begin{equation}\label{eq:data}
 \rho_0\ge0,\quad H(\rho_0)\in L^1(\Omega),\quad
 m_0=0\ \text{a.e. on }\{\rho_0=0\},\quad
 |m_0|^2/\rho_0\in L^1(\Omega).
\end{equation}
In all kinetic quotients, $|z|^2/r$ is zero at $(r,z)=(0,0)$ and
is $+\infty$ when $r=0$ and $z\ne0$. We write
$E_0=\int_\Omega[|m_0|^2/(2\rho_0)+H(\rho_0)]\,dx$ for the
initial energy.

Our main result is that every pair of initial data satisfying
\eqref{eq:data} gives rise to a global  finite-energy renormalized
weak solution of \eqref{eq:mass}--\eqref{eq:stress}.
The result allows vacuum and unbounded initial density, with
no symmetry assumption. We also prove a sequential stability
theorem for the artificial-pressure approximation used in the
construction. Along a subsequence, the approximating densities
converge strongly in $L^1((0,T)\times\Omega)$ for every finite $T$.
The proof relies on an endpoint compactness argument that
recovers renormalized continuity from the distributional limit
before strong density convergence is known.

The main obstruction is the removal of the source in the truncated
continuity equation: the supercritical interpolation exponent
$(\gamma-1)/\gamma$ vanishes at $\gamma=1$. We establish effective flux
for each fixed density truncation while retaining this source.
On sets with bounded truncation mean, the resulting variance provides
a single truncation-independent integrable majorant for the squared
source coefficient. Dominated convergence then removes the renormalized
source, recovering renormalized continuity before the truncated-entropy
comparison yields strong density convergence.

For power pressures $p(r)=r^\gamma$, the finite-energy theory developed
by Lions \cite{Lions}, Feireisl \cite{Feireisl} and Feireisl--Novotn\'y--Petzeltov\'a
\cite{FNP} gives global weak solutions
for $\gamma>1$ in 2D; see \cite[p.~391]{FNP}. 
In his 1998 monograph, Lions \cite[Remark~7.8, p.~181]{Lions} identified the relaxation of the pressure-exponent restriction as a fundamental problem in the mathematical study of compressible Navier--Stokes equations. We resolve the 2D isothermal endpoint of this finite-energy
renormalized existence problem. Together with the classical results for $\gamma>1$, this closes the finite-energy existence range for power pressures
$p(\rho)=\rho^\gamma$ at the isothermal endpoint $\gamma=1$ in 2D.

The effective viscous flux plays a central role in this theory.
Its structural importance was recognized in the works of Hoff \cite{Hoff1995ARMA}, Serre \cite{Serre1991}
and Lions \cite{Lions}. Its weak-continuity
mechanism couples the momentum equation to renormalized continuity
and supplies the density oscillation control needed in the
Lions \cite{Lions}, Feireisl \cite{Feireisl} and Feireisl--Novotn\'y--Petzeltov\'a \cite{FNP}  compactness argument. Bresch and Jabin \cite{BJ} develop quantitative
density compactness estimates for more general pressure laws
and viscous stresses; Chaudhuri, Mucha and Zatorska \cite{CMZ}
provide a related weak solution construction.

At $\gamma=1$, however, the natural energy controls only
$\rho\log(1+\rho)$ and hence uniform integrability of the density;
it provides no $L^\infty_tL^{1+\varepsilon}_x$ bound for
$\varepsilon>0$. Although the linear pressure passes directly to
the limit under weak convergence, the inertial terms and the
nonlinear functions of density required for renormalized continuity
and entropy comparison still have to be identified.

For suitable density-dependent viscosities,
the Bresch--Desjardins entropy \cite{BD} gives additional spatial
control of the density. Mellet and Vasseur \cite{MV} combined this
structure with additional kinetic integrability to obtain weak
stability. Vasseur and Yu recovered a Mellet--Vasseur type estimate at the
weak-solution level and established global existence for large data
allowing vacuum, for $\gamma>1$ in 2D and
$1<\gamma<3$ in 3D.
Li and Xin
\cite{LX} obtained independent existence results. Later  Bresch,
Vasseur and Yu \cite{BVY} treated a broad class of nonlinear
density-dependent viscosities satisfying the BD relation.
These theories impose energy and entropy conditions adapted to the
viscosity law. The present problem
has constant viscosities and only the natural finite-energy data
in \eqref{eq:data}.

For the 2D isothermal system, Plotnikov and Weigant
\cite{PW} overcome the difficulty of identifying inertia through
a Radon-transform estimate. Their argument yields local density
integrability $\rho\in L^{1+\alpha}_{\mathrm{loc}}$ for
$0<\alpha<1/6$, together with the negative-Sobolev compactness
needed to identify momentum and convection. Their existence
theorem assumes
\[
 \rho_0\in L^\infty(\Omega),\qquad \rho_0\ge c>0,\qquad
 u_0\in W^{1,2}_0(\Omega;\R^2)\cap L^\infty(\Omega;\R^2).
\]
The solution concept in \cite[Definition~1.2]{PW} is distributional;
the limiting argument in Section~7 of that work does not establish renormalized
continuity or strong density convergence. Weigant and Plotnikov
\cite{WP2017} extend density and kinetic integrability estimates
to the boundary and explicitly identify the existence of
renormalized solutions as an open problem
\cite[p.~1116]{WP2017}. The handbook treatment
\cite[Condition~1 and Theorem~2, pp.~1603--1605]{PWHandbook}
retains the same initial-data assumptions and proves weak existence
with local density gain for $0<\alpha<1/8$; renormalized continuity
of the limit and strong density convergence are not conclusions
of that theorem.\footnote{The renormalization lemma
\cite[Lemma~3, p.~1620]{PWHandbook} is applied to artificial-pressure
approximations with densities in $L^\infty_tL^6_x$. The
2D argument in
\cite[Section~5, pp.~1624--1637]{PWHandbook} derives local density
and kinetic integrability and refers to \cite{PW} for further details.}

We use the Radon-transform and local-density estimates of \cite{PW} to identify momentum and convection. This yields a distributional limit. Their required dependence on the natural energy quantities
is verified in Subsection~\ref{appendix:PW} of
Appendix~\ref{appendix:approx}. The endpoint argument developed
below then establishes fixed-truncation effective flux, renormalized
continuity, and strong $L^1$ density convergence. Together with
the energy-admissible approximation scheme, this allows the
construction from arbitrary data in \eqref{eq:data}, including
vacuum and unbounded density.

The remaining obstacle is the removal of the source in the
truncated continuity equation. For the standard smooth concave density
truncations $T_k(r)=kT(r/k)$, which we also call caps, set
\[
 a_k(r)=rT_k'(r)-T_k(r).
\]

The classical source-removal arguments in
\cite[Proposition~7.1, (7.6)]{Feireisl}
and \cite[Lemma~4.4, (4.25)]{FNP}
interpolate a small $L^1$ tail of $a_k$
with an $L^{\gamma+1}$ oscillation bound.
In the estimate for the squared $L^2$ norm, the exponent of the small
$L^1$ factor is $(\gamma-1)/\gamma$. At $\gamma=1$, this exponent
vanishes, so the classical interpolation step no longer yields the
residual smallness needed for source removal.
Thus the classical source-removal step does not close even after
inertia and fixed-cap effective flux have been identified.

We now state the main results. Theorem~\ref{thm:main} is our main
existence result; Theorem~\ref{thm:stability} gives the endpoint
compactness used in its construction. We first specify the notion
of weak solution used in the paper. For $T>0$, write
$Q_T=(0,T)\times\Omega$ and $Q_D=(0,T)\times D$ for
$D\Subset\Omega$. We use $L^p_tL^q_x(D)$ for the corresponding
mixed spaces; $W^{-1,q}(B)$ is the dual of $W^{1,q'}_0(B)$,
where $q'=q/(q-1)$. The notation $L\log L$ refers to
$r\log(1+r)$. The initial data satisfy \eqref{eq:data}, and
their energy is $E_0$. A smooth test on
$[0,T)\times\overline\Omega$ means the restriction of a smooth
function on a neighborhood, with compact support away from
the terminal time.
\begin{definition}[Finite-energy renormalized weak solution]
\label{def:renormalized-solution}
Let $T>0$, 
and assume \eqref{eq:viscosity} and \eqref{eq:data}.
A pair $(\rho,u)$, with $\rho\ge0$ and $m=\rho u$, is a
\emph{finite-energy renormalized weak solution} of the
isothermal system \eqref{eq:mass}--\eqref{eq:stress} on $Q_T$ with
initial data $(\rho_0,m_0)$ if the following conditions hold.
\begin{enumerate}[label=\textup{(\roman*)},leftmargin=*,itemsep=\smallskipamount]
\item \emph{Regularity and initial traces.} We have
\begin{align}
 \rho&\in L^\infty(0,T;L\log L(\Omega))
       \cap\Cweak([0,T];L^1(\Omega)),\label{eq:rho-class}\\
 u&\in L^2(0,T;W^{1,2}_0(\Omega;\R^2)),\qquad
 \sqrt\rho\,u\in L^\infty(0,T;L^2(\Omega;\R^2)),\label{eq:u-class}\\
 m&\in\Cweak([0,T];L^1(\Omega;\R^2)),\qquad
 \rho(0)=\rho_0,\quad m(0)=m_0.\label{eq:m-class}
\end{align}

\item \emph{Weak continuity and momentum equations.} For every smooth
scalar test $\psi$ on $[0,T)\times\overline\Omega$,
\begin{equation}\label{weak:mass}
 \int_0^T\!\int_\Omega(\rho\partial_t\psi+m\cdot\nabla\psi)
 +\int_\Omega\rho_0\psi(0)=0.
\end{equation}
For every smooth vector test $\varphi$ on the same set with zero
boundary trace,
\begin{equation}\label{weak:mom}
 \int_0^T\!\int_\Omega
 [m\cdot\partial_t\varphi+(\rho u\otimes u):\nabla\varphi
       +\rho\operatorname{div}\varphi-\mathbb S(u):\nabla\varphi]
 +\int_\Omega m_0\cdot\varphi(0)=0.
\end{equation}

\item \emph{Renormalized continuity.} For every
$b\in C^1([0,\infty))$ whose derivative is compactly supported,
\begin{equation}\label{eq:renormalized}
 \partial_t b(\rho)+\operatorname{div}(b(\rho)u)
 +(\rho b'(\rho)-b(\rho))\operatorname{div}u=0
\end{equation}
with initial value $b(\rho_0)$. Precisely, for every scalar test $\psi$
as in \textup{(ii)},
\begin{equation}\label{weak:renorm}
 \int_0^T\!\int_\Omega
 [b(\rho)\partial_t\psi+b(\rho)u\cdot\nabla\psi
  -(\rho b'(\rho)-b(\rho))\operatorname{div}u\,\psi]
 +\int_\Omega b(\rho_0)\psi(0)=0.
\end{equation}

\item \emph{Initial-time energy inequality.} For almost every
$t\in(0,T)$,
\begin{equation}\label{eq:energy}
 \int_\Omega\left[\frac12\rho|u|^2+H(\rho)\right](t)\,dx
 +\int_0^t\!\int_\Omega\mathbb S(u):\nabla u\,dx\,ds
 \le E_0.
\end{equation}
\end{enumerate}
A solution on $[0,\infty)$ is \emph{global} if these conditions hold
on every finite time interval with the same initial data.
\end{definition}

The continuity formulations may equivalently be read after extending
$\rho,u$ by zero outside $\Omega$ and $b(\rho)$ by $b(0)$.
The velocity in the definition is the Sobolev function in
\eqref{eq:u-class}, including on the vacuum set.

We now state our two main results.
Theorem~A establishes endpoint sequential stability and compactness,
while Theorem~B gives global finite-energy existence for arbitrary
natural-energy data.

For fixed $\delta>0$ and $\beta\ge6$, we consider weak solutions
$(r,v)$ with pressure $p_\delta(r)=r+\delta r^\beta$ and initial density
$r_0$. The weak equations and initial terms are understood as in
\eqref{weak:mass}--\eqref{weak:mom}, with $p_\delta(r)$ in the momentum
equation. On each finite interval $(0,T)$ on which the solution is
defined, we require
\[
 \begin{gathered}
 r\in L^\infty(0,T;L^\beta(\Omega))\cap L^{\beta+1}(Q_T)
      \cap\Cweak([0,T];L^\beta(\Omega)),\qquad r\ge0,\\
 v\in L^2(0,T;W^{1,2}_0(\Omega;\mathbb R^2)),\qquad
 \sqrt r\,v\in L^\infty(0,T;L^2(\Omega;\mathbb R^2)),\\
 rv\in\Cweak([0,T];L^{2\beta/(\beta+1)}(\Omega;\mathbb R^2)).
 \end{gathered}
\]
The renormalized continuity identity \eqref{eq:renormalized}, with
$(\rho,u)$ replaced by $(r,v)$ and initial term $b(r_0)$, is required
for every $b\in C^1([0,\infty))$ whose derivative is compactly supported.
The same identity is required for every $b\in C^2([0,\infty))$ satisfying
\[
 |b(r)|+|rb'(r)|+|r^2b''(r)|\le C_b(1+r^2).
\]
These fixed-parameter bounds need not be uniform in $\delta$.
The construction for regularized initial data is given in
Proposition~\ref{app:fixed}.

\begin{maintheorem}[Endpoint sequential stability]\label{thm:stability}
Assume \eqref{eq:viscosity}. Let $\beta\ge6$ be fixed and let
$\delta_n\downarrow0$. Let $(\rho_n,u_n)$ be renormalized weak solutions
with no-slip velocity and pressure $\rho_n+\delta_n\rho_n^\beta$,
satisfying the fixed-parameter regularity and renormalization properties
stated above. Write $m_n=\rho_nu_n$.

Suppose the initial density and momentum converge in $L^1$ to
$(\rho_0,m_0)$ satisfying \eqref{eq:data}, their natural energy integrals
converge to $E_0$, and $\delta_n\int_\Omega\rho_{0,n}^\beta\to0$.
Assume that, on each finite interval $(0,T)$ on which the sequence is
defined, the following energy inequality holds for almost every
$t\in(0,T)$, with $E_{0,n}$ denoting the corresponding initial energy:
\begin{equation}\label{exist:approxenergy}
 E_n(t)+\int_0^t\!\int_\Omega\mathbb S(u_n):\nabla u_n\le E_{0,n},
 \quad E_n(t)=\int_\Omega\left[
 \frac12\rho_n|u_n|^2+H(\rho_n)+\frac{\delta_n}{\beta-1}\rho_n^\beta
 \right](t),
\end{equation}

On each such interval, assume in addition that
\[
 \underset{0<t<T}{\operatorname{ess\,sup}}
 \int_\Omega\left(\rho_n|u_n|^2+\rho_n\log(1+\rho_n)
                    +\delta_n\rho_n^\beta\right)dx
 +\int_{Q_T}|\nabla u_n|^2\,dx\,dt\le C
\]
and
\[
 \int_{Q_T}\zeta^2\bigl(\rho_n+\delta_n\rho_n^\beta\bigr)
                  \rho_n^{1/8}\,dx\,dt\le C_{\zeta,T}
 \qquad\text{for every }0\le\zeta\in C_c^\infty(\Omega),
\]
with constants independent of $n$.
Then, on every finite time interval $(0,T)$ on which the sequence is defined, a
subsequence satisfies
\begin{align*}
 \rho_n&\longrightarrow\rho&&\text{strongly in }L^1(Q_T),\\
 u_n&\rightharpoonup u&&\text{weakly in }L^2_tW^{1,2}_0,\\
 m_n&\rightharpoonup\rho u&&\text{weakly in }L^1(Q_T),\\
 \chi m_n&\longrightarrow\chi\rho u
       &&\text{strongly in }L^2_tW^{-1,192/95}_x
               \quad(\chi\in C_c^\infty(\Omega)),\\
 \rho_nu_n\otimes u_n&\rightharpoonup\rho u\otimes u
       &&\text{locally in }L^{6/5}_tL^{192/191}_x.
\end{align*}
The limit is a finite-energy renormalized weak solution in the sense of
Definition~\ref{def:renormalized-solution} on this interval. Artificial
pressure converges to zero in $L^1(Q_D)$ for every $D\Subset\Omega$.
\end{maintheorem}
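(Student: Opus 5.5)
The proof has four stages: extract weak limits, identify momentum and convection, prove effective flux for each fixed cap with a truncation-independent bound on the source coefficient, and finally obtain strong convergence by comparing truncated entropies.

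\textbf{Weak limits.} From the uniform energy bound, $H(\rho_n)$ is bounded in $L^\infty_tL^1_x$. By de la Vallée-Poussin this makes $\{\rho_n\}$ equi-integrable, so a subsequence satisfies $\rho_n\rightharpoonup\rho$ weakly in $L^1(Q_T)$ and in $\Cweak([0,T];L^1)$. We also have $u_n\rightharpoonup u$ in $L^2_tW^{1,2}_0$. For each cap we pass to $T_k(\rho_n)\weakstar\overline{T_k(\rho)}$ in $L^\infty$, and similarly for $\overline{\rho\log\rho}$ and related functions.

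The local bound $\int\zeta^2(\rho_n+\delta_n\rho_n^\beta)\rho_n^{1/8}\le C_{\zeta,T}$ gives local higher integrability $\rho_n\in L^{9/8}_{\mathrm{loc}}$. It also gives $\delta_n\rho_n^{\beta}\rho_n^{1/8}$ bounded locally. Combined with $\delta_n^{1/\beta}\rho_n$ bounded in $L^\infty_tL^\beta_x$, this yields $\delta_n\rho_n^\beta\to0$ in $L^1(Q_D)$ by an interpolation between $\delta_n\int\rho_n^\beta\,\rho_n^{1/8}$ and the set where $\rho_n$ is small.

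\textbf{Momentum and convection.} Using the momentum equation, $\partial_t(\chi m_n)$ is bounded in a negative Sobolev space. Together with the Sobolev bound on $u_n$ and the local integrability of $\rho_n$, an Aubin--Lions/Lions lemma argument gives $\chi m_n\to\chi\rho u$ strongly in $L^2_tW^{-1,192/95}_x$. Pairing this with $u_n\rightharpoonup u$ in $L^2_tW^{1,2}$ identifies $\rho_nu_n\otimes u_n\rightharpoonup\rho u\otimes u$. The exponents $6/5$ and $192/191$ come from Hölder with the $\rho^{1+1/8}$ bound and the embedding $W^{1,2}\subset L^q$ for all $q<\infty$, applied in the manner of the Plotnikov--Weigant estimates (verified in Appendix~\ref{appendix:PW}). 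The limit then satisfies \eqref{weak:mass}, \eqref{weak:mom}, and the energy inequality follows by lower semicontinuity and convexity of $H$.

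\textbf{Fixed-cap effective flux.} For each fixed $k$, test the momentum equation with $\zeta\nabla\Delta^{-1}[\zeta T_k(\rho_n)]$. Using the renormalized equation for $T_k(\rho_n)$ and the div--curl/commutator lemma, this yields
\[
\overline{p\,T_k(\rho)}-(2\mu+\lambda)\overline{T_k(\rho)\operatorname{div}u}
=\rho\,\overline{T_k(\rho)}-(2\mu+\lambda)\overline{T_k(\rho)}\operatorname{div}u
\]
locally, with the artificial pressure term vanishing. Since $p(\rho)=\rho$ and $T_k$ is increasing, monotonicity gives $\overline{\rho T_k(\rho)}\ge\rho\overline{T_k(\rho)}$, so $\overline{T_k(\rho)\operatorname{div}u}-\overline{T_k(\rho)}\operatorname{div}u\ge0$. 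Passing to the limit in the renormalized equation for $T_k(\rho_n)$ gives
\[
\partial_t\overline{T_k(\rho)}+\operatorname{div}(\overline{T_k(\rho)}u)+\overline{a_k(\rho)\operatorname{div}u}=0.
\]
Regularizing \`a la DiPerna--Lions, we then derive the renormalized equation for $b(\overline{T_k(\rho)})$, with source
\[
b'(\overline{T_k(\rho)})\bigl(\overline{a_k\operatorname{div}u}-\overline{a_k}\operatorname{div}u\bigr).
\]

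\textbf{Source removal (main obstacle).} This is where the classical interpolation fails, because the exponent $(\gamma-1)/\gamma$ equals $0$. The plan is as follows.
\begin{enumerate}[label=(\alph*)]
\item On sets $\{\overline{T_k(\rho)}\le M\}$, the effective flux identity expresses $(2\mu+\lambda)(\overline{T_k\operatorname{div}u}-\overline{T_k}\operatorname{div}u)$ as the centered covariance $\overline{\rho T_k}-\rho\overline{T_k}$.
\item Bound this covariance, uniformly in $k$, by an integrable function built from $\overline{\rho\log(1+\rho)}$. The point is to obtain a single $k$-independent $L^1$ majorant $G$ for the squared oscillation of $a_k$, namely $\operatorname{osc}^2(a_k)\lesssim G$ on the truncated set.
\item Since $a_k(\rho_n)\to0$ pointwise where $\rho_n\ll k$, and is bounded by $C\rho_n 1_{\{\rho_n\ge k\}}$, dominated convergence sends the source to zero as $k\to\infty$ for $b$ with $b'$ supported in $[0,M]$.
\end{enumerate}
Letting $k\to\infty$ yields renormalized continuity for $\rho$ itself, because $\overline{T_k(\rho)}\to\rho$ in $L^1$ by uniform integrability.

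\textbf{Strong convergence.} Finally, compare the equations for $L_k(\rho)=\rho\int^\rho T_k(s)/s^2\,ds$ and $\overline{L_k(\rho)}$. The effective flux sign gives
\[
\int(\overline{L_k(\rho)}-L_k(\rho))(t)\le\int_0^t\!\int(\overline{T_k}-T_k(\rho))\operatorname{div}u+o_k(1).
\]
The right-hand side is controlled by the $k$-uniform majorant from the source-removal step. Letting $k\to\infty$ gives $\overline{\rho\log\rho}=\rho\log\rho$, and strict convexity then yields $\rho_n\to\rho$ strongly in $L^1(Q_T)$. The remaining renormalized identities for general $b$ then follow directly.
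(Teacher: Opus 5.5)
Your outline follows the paper up to the fixed-cap flux identity. You also correctly see what the endpoint requires: one $k$-independent $L^1$ majorant for the squared source coefficient on $\{t_k\le M\}$, followed by dominated convergence. The gap is in how you propose to build that majorant. No function of $\overline{\rho\log(1+\rho)}$ can dominate $C_k=\overline{\rho T_k(\rho)}-\rho\,t_k$ uniformly in $k$. Since $C_k\uparrow V=\E|X-\rho|^2$, such a bound would force the Young-measure variance to be finite, yet entropy information is compatible with $\E X^2=\infty$. Step (c) fails for the same reason. Because $\operatorname{div}u_n$ is only bounded in $L^2$, you need $\int_{\{t_k\le M\}}\E|a_k(X)|^2\to0$, which is an $L^2$-level statement. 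The bound $|a_k(\rho_n)|\le\rho_n\mathbf 1_{\{\rho_n>k\}}$ only gives smallness in $L^1$. With entropy information alone, $\E|a_k(X)|^2$ need not tend to zero. Since $a_k=-2k$ on $[3k,\infty)$, a Young measure with tails $\E\mathbf 1_{\{X>s\}}\sim s^{-2}$ has finite entropy but $\E|a_k(X)|^2\ge4k^2\,\E\mathbf 1_{\{X\ge3k\}}\not\to0$. This is exactly the vanishing interpolation exponent $(\gamma-1)/\gamma$ at $\gamma=1$. Your step (c) therefore runs into the classical obstruction rather than removing it.

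The missing ingredient is that the majorant comes from the dissipation, through the flux identity itself. Integrating the identity gives $\int_{Q_T}C_k=\nu\lim_n\int_{Q_T}d_n(T_k(\rho_n)-t_k)\le\nu D_*^{1/2}(\int_{Q_T}V_k)^{1/2}$, where $V_k=\Var(T_k(X))$. Because $T_k$ is increasing and one-Lipschitz, an independent-copy computation gives $V_k\le C_k$. Hence $\int C_k\le\nu^2D_*$ uniformly in $k$, and by monotone convergence $\int V\le\nu^2D_*$. Now take $k>2M$ and $t_k\le M$. Since $a_k=0$ on $[0,k]$ and $T_k(s)\ge k>2t_k$ for $s>k$, one has $|a_k(s)|\le T_k(s)\le2|T_k(s)-t_k|$. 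So $\E|a_k(X)|^2\le4V_k\le4V\in L^1$, while $\E|a_k(X)|^2\to0$ a.e.\ because $\E X^2=V+\rho^2<\infty$ a.e. The same variance is what closes your final step. The term $\int_0^t\!\int_\Omega(T_k(\rho)-t_k)\operatorname{div}u$ tends to zero only because $|T_k(\rho)-t_k|^2\le V$, by Jensen and the Lipschitz bound, which with dominated convergence gives $L^2$ convergence. There are also two smaller points. First, the renormalized cap source is $-b'(t_k)S_k$, where $S_k$ is the weak limit of $a_k(\rho_n)\operatorname{div}u_n$; subtracting $\overline{a_k}\operatorname{div}u$ only moves a term of the same size to the left side, where it must be removed by the same majorant. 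Second, the entropy comparison needs the time-zero trace of $\int_\Omega\overline{L_k(\rho)}$, which you do not address; the paper obtains it from $H^1(0,T)$ compactness of $\int_\Omega L_k(\rho_n)$.
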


Theorem~A supplies the compactness needed for the construction of
solutions from arbitrary natural-energy data. This yields our second
main result.

\begin{maintheorem}[Global finite-energy existence]\label{thm:main}
Assume
\eqref{eq:viscosity}. Let the initial data satisfy \eqref{eq:data}.
Then there exists a global finite-energy renormalized weak solution
$(\rho,u)$ of \eqref{eq:mass}--\eqref{eq:stress} in the sense of
Definition~\ref{def:renormalized-solution}.
The solution is obtained as the limit of artificial-pressure approximants
whose densities converge strongly in $L^1(Q_T)$ for every finite $T$.
\end{maintheorem}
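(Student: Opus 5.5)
We deduce Theorem~\ref{thm:main} from Theorem~\ref{thm:stability} by approximating the data, constructing approximants, and passing to the limit on an exhausting sequence of time intervals. The first step regularizes the data. We fix $\beta\ge6$ and $\delta_n\downarrow0$, and choose $\rho_{0,n}\in C^\infty(\overline\Omega)$ with $\rho_{0,n}\ge 1/n$ and $\rho_{0,n}\to\rho_0$ in $L^1$, with $H(\rho_{0,n})\to H(\rho_0)$ in $L^1$. This can be done by truncating at level $k_n$, mollifying, and adding $1/n$. Convexity of $H$ and Jensen under mollification give the energy convergence: the upper bound comes from Jensen and the lower bound from Fatou. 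We set $m_{0,n}=\sqrt{\rho_{0,n}}\,w_n$, where $w_n$ is a smooth, compactly supported approximation of $m_0/\sqrt{\rho_0}$ in $L^2$ (defined as zero on $\{\rho_0=0\}$, which is legitimate by \eqref{eq:data}). Then $|m_{0,n}|^2/\rho_{0,n}=|w_n|^2$ converges in $L^1$, and $m_{0,n}\to m_0$ in $L^1$ by Cauchy--Schwarz. Finally we take $k_n$ growing slowly relative to $\delta_n$, so that $\delta_n\int\rho_{0,n}^\beta\le\delta_n|\Omega|(k_n+1)^\beta\to0$. With these choices, the initial energies $E_{0,n}$ converge to $E_0$.

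The second step invokes Proposition~\ref{app:fixed}. For each $n$ it yields a global renormalized weak solution $(\rho_n,u_n)$ with pressure $\rho_n+\delta_n\rho_n^\beta$, with the fixed-parameter regularity and renormalization properties, and with the energy inequality \eqref{exist:approxenergy} on every finite interval. The energy inequality, together with $\mu>0$, $\mu+\lambda\ge0$ and the Korn/Poincaré inequality for no-slip fields, gives the uniform bound on $\sup_t\int(\rho_n|u_n|^2+\rho_n\log(1+\rho_n)+\delta_n\rho_n^\beta)+\|\nabla u_n\|_{L^2}^2$. The bound on $\sup_t\int\rho_n\log(1+\rho_n)$ follows from $H(r)\ge r\log(1+r)-Cr$ and mass conservation. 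The local higher-integrability bound $\int\zeta^2(\rho_n+\delta_n\rho_n^\beta)\rho_n^{1/8}\le C_{\zeta,T}$ is the Plotnikov--Weigant Radon-transform estimate, whose dependence only on the natural energy is verified in Subsection~\ref{appendix:PW}. We quote it with constants depending only on $E_0$ (bounded uniformly since $E_{0,n}\to E_0$), on $T$, on $\zeta$ and on $\Omega$.

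The third step applies Theorem~\ref{thm:stability} on $(0,T)$ for $T=1,2,3,\dots$ and extracts a diagonal subsequence. On each $(0,T)$ the limit is a finite-energy renormalized weak solution, and $\rho_n\to\rho$ strongly in $L^1(Q_T)$. The limits on nested intervals agree on the overlaps because they are limits of a common subsequence. Hence a single pair $(\rho,u)$ is defined on $[0,\infty)$ and satisfies Definition~\ref{def:renormalized-solution} on every finite interval with the same data $(\rho_0,m_0)$, i.e.\ it is global.

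The only delicate point is the data approximation. The hypotheses of Theorem~\ref{thm:stability} require convergence of the full natural energy, not merely its boundedness, and simultaneously $\delta_n\int\rho_{0,n}^\beta\to0$; the coupling of $k_n$ to $\delta_n$ resolves this. Everything else is the substantive content of Theorem~\ref{thm:stability} and Proposition~\ref{app:fixed}, which we take as given.
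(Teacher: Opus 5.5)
Your proposal is correct and follows the paper's proof essentially step for step: you regularize the data, take the fixed-$\delta$ solutions of Proposition~\ref{app:fixed}, derive the uniform energy and Plotnikov--Weigant bounds, and then apply Theorem~\ref{thm:stability} on $(0,L)$, $L=1,2,\dots$, with a diagonal extraction. The only differences are in the data step, and they are harmless: you build the momentum as $\sqrt{\rho_{0,n}}\,w_n$, so that $u_{0,n}=w_n/\sqrt{\rho_{0,n}}\in C_c^\infty$ and no coupling with $\|u_{0,n}\|_\infty$ is needed, and you tie the truncation level to $\delta_n$ instead of choosing $\delta_n$ after the data as in \eqref{app:delta}.
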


The key point is
to use the available quadratic oscillation control as an integrable
majorant for the cap source.
Let $\eta_{t,x}$ be the density Young measure and let $X$ denote
its coordinate variable. For an integrable function $f$, write
\[
 \E f(X):=\int_{[0,\infty)} f(r)\,\mathrm d\eta_{t,x}(r),
\]
suppressing the dependence on $(t,x)$.
The symbols $\Cov$ and $\Var$ denote covariance and variance
with respect to the same measure. Thus $\E X=\rho$, and we set
\[
 \begin{aligned}
 t_k&:=\E T_k(X),\\
 C_k&:=\Cov(X,T_k(X))
       =\E[X T_k(X)]-\rho\,t_k,\\
 V_k&:=\Var(T_k(X))
       =\E|T_k(X)-t_k|^2.
 \end{aligned}
\]
The caps are increasing, concave and one-Lipschitz, agree with
the identity on $[0,k]$, and equal $2k$ on $[3k,\infty)$.
The fixed-cap effective-flux identity reads
\begin{equation}\label{intro:flux}
 C_k=\nu(h_k-t_k\operatorname{div}u),\qquad \nu=2\mu+\lambda>0,
\end{equation}
where $h_k$ is the weak limit of
$T_k(\rho_n)\operatorname{div}u_n$.
We prove this identity from the distributional limit equations,
using the 2D compactness estimates of \cite{PW}
and retaining the cap source throughout the passage.

The monotone pressure-product inequalities of Lions \cite{Lions}
and the cap-oscillation estimates of Feireisl
\cite[Proposition~6.1]{Feireisl} and Feireisl, Novotn\'y and
Petzeltov\'a \cite[Lemma~4.3]{FNP} provide the underlying
quadratic information. At linear pressure, we express it in
the centered form
\begin{equation}\label{intro:variance}
 0\le V_k\le C_k,\qquad C_k\uparrow V:=\E|X-\rho|^2.
\end{equation}
The oscillation estimate obtained from \eqref{intro:flux} yields
\begin{equation}\label{intro:bound}
 \int_{Q_T}V\le\nu^2D_*,\qquad
 D_*:=\sup_n\int_{Q_T}|\operatorname{div}u_n|^2,
 \qquad Q_T=(0,T)\times\Omega.
\end{equation}
This controls the variance of the Young measure rather than the
square of its mean: a Dirac measure has zero variance even when
the density is not in $L^2(Q_T)$.

The new ingredient is to use this integrable centered
variance as a cap-independent majorant for the truncation source.
More precisely, for every fixed $M<\infty$ and $k>2M$,
\begin{equation}\label{intro:majorant}
 \mathbf1_{\{t_k\le M\}}\E|a_k(X)|^2\le4V,
 \qquad V\in L^1(Q_T).
\end{equation}
The source coefficient vanishes on $[0,k]$. On the remaining
tail, a bounded cap mean gives
$|a_k(X)|\le T_k(X)\le2|T_k(X)-t_k|$, which converts
the centered oscillation bound into \eqref{intro:majorant}.
Finite variance and finite mean also give a finite second moment
in the Young probability space at almost every $(t,x)$, so
$\E|a_k(X)|^2\to0$ pointwise. Dominated convergence with the
single majorant $4V$ therefore gives
\begin{equation}\label{intro:residual}
 \int_{\{t_k\le M\}}\E|a_k(X)|^2\longrightarrow0.
\end{equation}
This estimate supplies the smallness lost in the classical
interpolation argument at $\gamma=1$. The integrable variance
majorant is sufficient even when the mean density is not
square integrable.

To use this estimate, we first renormalize the bounded cap-mean
equation
\[
 \partial_t t_k+\operatorname{div}(t_ku)=-S_k,
 \qquad S_k=\mathop{\mathrm{w}\!\!-\!\!\lim}_{n\to\infty}
               a_k(\rho_n)\operatorname{div}u_n.
\]
The transport commutator applies to $t_k$ before strong density
convergence is available. If $b'$ has compact support, the
renormalized source $b'(t_k)S_k$ is supported on a bounded
cap-mean set. Cauchy--Schwarz and \eqref{intro:residual} remove
this source as $k\to\infty$, giving renormalized continuity
for $\rho$. The classical truncated-entropy comparison then
eliminates the remaining density oscillation and yields strong
$L^1$ convergence. The initial entropy trace needed in this
comparison is proved separately by compactness in time of the
spatial entropy integrals. Thus renormalization is recovered
before the entropy defect collapses, and strong density
convergence follows only at the end of the argument.

The rest of the paper is organized as follows.
Section~\ref{sec:approx} constructs energy-admissible
artificial-pressure approximations and collects the uniform
estimates. Section~\ref{sec:momentum} identifies momentum
and convection before strong density convergence is known,
and Section~\ref{sec:flux} establishes fixed-cap effective flux.
Section~\ref{sec:variance} derives the integrable majorant for
the cap source. Section~\ref{sec:closure} uses it to recover
renormalized continuity and then closes the truncated-entropy
argument, yielding strong density convergence. The density,
momentum, cap, and truncated-entropy traces are established
at the stages where they are needed.
Section~\ref{sec:existence} completes the energy and boundary
passages and proves the two main theorems. The approximation
details and the natural-energy dependence of the estimates
from \cite{PW} are given in Appendix~\ref{appendix:approx}.

Throughout the paper, all expectations refer to the single
density Young measure $\eta_{t,x}$. The symbols $t_k,P_k,h_k,S_k$
denote the cap, pressure--cap, cap--divergence and cap-source weak
limits. Constants may depend on the fixed time interval and localization;
cap dependence is permitted only while the cap is fixed.

\section{Approximation and uniform estimates}\label{sec:approx}

We collect the approximation results needed for the endpoint argument.
The initial-data regularization and the fixed high-pressure construction
follow the finite-energy framework of \cite{FNP,Lions}; the local
estimates are those of \cite{PW}. We state the conclusions used below
and give the details of the adaptations to the isothermal setting
and natural-energy data in Appendix~\ref{appendix:approx}.

\subsection{Approximation of the initial data}

\begin{lemma}[Energy-consistent initial approximation]\label{app:data}
Suppose that $(\rho_0,m_0)$ satisfies the initial-data assumptions of the
main theorem. There are functions $\rho_{0,n}$, smooth on a neighborhood
of $\overline\Omega$, and $u_{0,n}\in C_c^\infty(\Omega;\mathbb R^2)$,
such that $0<c_n\leq\rho_{0,n}\leq C_n<\infty$ and, with
$m_{0,n}=\rho_{0,n}u_{0,n}$,
\begin{align}
 \rho_{0,n}&\longrightarrow\rho_0,
 &H(\rho_{0,n})&\longrightarrow H(\rho_0),
 &m_{0,n}&\longrightarrow m_0 &&\text{in }L^1(\Omega),\label{app:dataL1}\\
 \sqrt{\rho_{0,n}}u_{0,n}&\longrightarrow
 \frac{m_0}{\sqrt{\rho_0}} &&&&&\text{in }L^2(\Omega;\mathbb R^2).
 \label{app:weighteddata}
\end{align}
The last quotient is zero on $\{\rho_0=0\}$.
\end{lemma}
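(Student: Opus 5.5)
The plan is to build the approximation in two stages: first a bounded, strictly positive density together with a velocity field whose weighted version converges, and then a mollification with compactly supported velocity. Throughout, the momentum is kept in the form $m_{0,n}=\rho_{0,n}u_{0,n}$.

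\emph{Stage 1: lifting the density and fixing a velocity.} Write $w_0:=m_0/\sqrt{\rho_0}\in L^2(\Omega;\R^2)$, with $w_0=0$ on $\{\rho_0=0\}$. For $\varepsilon>0$, set $\rho^\varepsilon:=\min(\rho_0,1/\varepsilon)+\varepsilon$. Then $\varepsilon\le\rho^\varepsilon\le 1/\varepsilon+\varepsilon$ and $\rho^\varepsilon\to\rho_0$ in $L^1$ as $\varepsilon\to0$. Since $H$ is continuous on $[0,\infty)$ and $0\le H(r)\le C(1+H(s))$ whenever $r\le s+1$, dominated convergence with the majorant $C(1+H(\rho_0))$ gives $H(\rho^\varepsilon)\to H(\rho_0)$ in $L^1$.

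For the velocity, take $w^\varepsilon:=w_0\mathbf 1_{\{\rho_0>\varepsilon,\,|w_0|\le1/\varepsilon\}}$, so that $w^\varepsilon\to w_0$ in $L^2$, and put $u^\varepsilon:=w^\varepsilon/\sqrt{\rho^\varepsilon}$. This field is bounded. The weighted velocity $\sqrt{\rho^\varepsilon}\,u^\varepsilon=w^\varepsilon$ converges in $L^2$. For the momentum, $\rho^\varepsilon u^\varepsilon=\sqrt{\rho^\varepsilon}\,w^\varepsilon$; on the support of $w^\varepsilon$ we have $\sqrt{\rho^\varepsilon}\to\sqrt{\rho_0}$ in $L^2$, so the product converges in $L^1$ to $\sqrt{\rho_0}\,w_0=m_0$.

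\emph{Stage 2: smoothing.} Fix $\varepsilon$. Extend $\rho^\varepsilon$ by $\varepsilon$ outside $\Omega$ and mollify it. The result is smooth on a neighbourhood of $\overline\Omega$, stays within $[\varepsilon,1/\varepsilon+\varepsilon]$, and converges to $\rho^\varepsilon$ in every $L^p$ and a.e.; hence $H$ of it converges in $L^1$ by bounded convergence.

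For the velocity, multiply $u^\varepsilon$ by a cutoff $\mathbf 1_{\Omega_j}$, where $\Omega_j\Subset\Omega$ exhausts $\Omega$, and then mollify with radius below $\operatorname{dist}(\Omega_j,\partial\Omega)$. This produces $C_c^\infty(\Omega)$ fields that are uniformly bounded by $\|u^\varepsilon\|_\infty$ and converge to $u^\varepsilon$ in $L^2$ and a.e. along a subsequence. With density and velocity both bounded, $\sqrt{\rho}\,u$ and $\rho u$ then converge in $L^2$ and $L^1$ by dominated convergence.

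\emph{Conclusion and main difficulty.} A diagonal choice $\varepsilon_n\to0$, with the Stage 2 parameters chosen so that each error is below $1/n$, yields the lemma with $c_n=\varepsilon_n$ and $C_n=1/\varepsilon_n+\varepsilon_n$. The only delicate point is the weighted convergence \eqref{app:weighteddata} near vacuum. Because the velocity is defined directly through $w^\varepsilon/\sqrt{\rho^\varepsilon}$, the weighted quantity is exactly $w^\varepsilon$ before smoothing, so Stage 1 involves no division by small densities. Stage 2 is harmless because all quantities there are bounded above and below.
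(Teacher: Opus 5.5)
Your proof is correct. Your treatment of the density is essentially the paper's: truncate, lift by a small constant, mollify, and pass $H$ through domination and continuity. Your treatment of the velocity, however, takes a genuinely different route.

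The paper sets $u_0=m_0/\rho_0$ and uses density of $C_c^\infty(\Omega;\mathbb R^2)$ in the weighted space $L^2(\rho_0\,dx)$ to obtain $v_n$ with $\|\sqrt{\rho_0}(v_n-u_0)\|_2\le n^{-1}$. It approximates the density independently. It then couples the two approximations by choosing the density index so fast that $(1+\|v_n\|_\infty^2)\|\rho_{0,n}-\rho_0\|_1\le n^{-1}$. With $|\sqrt a-\sqrt b|^2\le|a-b|$, this gives $\|(\sqrt{\rho_{0,n}}-\sqrt{\rho_0})v_n\|_2\le\|v_n\|_\infty\|\rho_{0,n}-\rho_0\|_1^{1/2}$, which closes the weighted and momentum estimates in one line.

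You instead work with $w_0=m_0/\sqrt{\rho_0}$ in unweighted $L^2$, truncate it, and divide by the lifted density. The intermediate pair then has weighted velocity exactly $w^\varepsilon$, so the vacuum difficulty disappears before any smoothing. At fixed $\varepsilon$ all weights are bounded above and below, so only unweighted $L^2$ density of compactly supported smooth fields is needed. Your momentum step in Stage~1 is slightly loosely worded, but sound: $\sqrt{\rho^\varepsilon}\to\sqrt{\rho_0}$ in $L^2(\Omega)$ globally, and $w^\varepsilon\to w_0$ in $L^2$, so their product converges in $L^1$.

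The two routes trade off differently. The paper's is shorter: it has one index, one coupling inequality, and boundedness of $v_n$ comes for free. Yours avoids the weighted density theorem and makes the structure near vacuum irrelevant, at the cost of an extra truncation of $w_0$ and a two-level diagonal choice.
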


\begin{proof}
The weighted kinetic approximation and the logarithmic-energy
regularization are given in Appendix~\ref{appendix:initial}.
\end{proof}

Fix $\beta\ge6$. With $A_j=\int_\Omega\rho_{0,j}^\beta\,dx$, set
\begin{equation}\label{app:delta}
 \delta_n=\frac{1}{n(1+\sum_{j=1}^n A_j)},\qquad
 p_n(r)=r+\delta_n r^\beta.
\end{equation}
Then $\delta_n\downarrow0$ and $\delta_n A_n\le n^{-1}$, so
\begin{equation}\label{app:initialenergy}
 E_{0,n}:=\int_\Omega
 \left(\frac12\rho_{0,n}|u_{0,n}|^2+H(\rho_{0,n})
       +\frac{\delta_n}{\beta-1}\rho_{0,n}^\beta\right)dx
 \longrightarrow
 E_0:=\int_\Omega\left(\frac{|m_0|^2}{2\rho_0}+H(\rho_0)\right)dx.
\end{equation}

\subsection{An energy-admissible artificial-pressure solution}

For fixed $\delta>0$, write
\[
 p_\delta(r)=r+\delta r^\beta,\qquad
 H_\delta(r)=H(r)+\frac{\delta}{\beta-1}r^\beta.
\]
The coercivity identity, valid also when $\mu+\lambda=0$, is
\begin{equation}\label{app:coercivity}
 \int_\Omega\mathbb S(w):\nabla w
 =\mu\|\nabla w\|_2^2+(\mu+\lambda)\|\operatorname{div}w\|_2^2
 \ge\mu\|\nabla w\|_2^2,
 \qquad w\in W^{1,2}_0(\Omega;\mathbb R^2).
\end{equation}

\begin{proposition}[Fixed artificial pressure]\label{app:fixed}
Let $\delta>0$, $\beta\geq6$, $0<c\leq r_0\leq C$, with $r_0$ smooth
on a neighborhood of $\overline\Omega$, and let
$v_0\in C_c^\infty(\Omega;\mathbb R^2)$. There is a global solution
$(r,v)$ of
\begin{equation}\label{app:fixedsystem}
 \partial_t r+\operatorname{div}(rv)=0,\qquad
 \partial_t(rv)+\operatorname{div}(rv\otimes v)+\nabla p_\delta(r)
       =\operatorname{div}\mathbb S(v),\qquad v|_{\partial\Omega}=0,
\end{equation}
with the prescribed density and momentum initial terms $r_0$ and $r_0v_0$.
For every $T<\infty$,
\begin{gather*}
 r\in L^\infty(0,T;L^\beta(\Omega))\cap L^{\beta+1}(Q_T)
          \cap C_{\mathrm{weak}}([0,T];L^\beta(\Omega)),\qquad r\geq0,\\
 v\in L^2(0,T;W^{1,2}_0(\Omega;\mathbb R^2)),\qquad
 \sqrt r\,v\in L^\infty(0,T;L^2(\Omega;\mathbb R^2)),\\
 rv\in C_{\mathrm{weak}}([0,T];L^{2\beta/(\beta+1)}(\Omega;\mathbb R^2)).
\end{gather*}
For every $b\in C^1([0,\infty))$ whose derivative is compactly
supported, continuity is renormalized:
\begin{equation}\label{app:fixed-renormalized}
 \partial_t b(r)+\operatorname{div}(b(r)v)
 +(rb'(r)-b(r))\operatorname{div}v=0,
\end{equation}
with initial term $b(r_0)$. The same identity also holds for
\begin{equation}\label{app:PWclass}
 b\in C^2([0,\infty)),\qquad
 |b(r)|+|rb'(r)|+|r^2b''(r)|\leq C_b(1+r^2).
\end{equation}
For almost every $t>0$,
\begin{equation}\label{app:fixedenergy}
 E_\delta(t)+\int_0^t\!\int_\Omega\mathbb S(v):\nabla v\,dx\,ds
 \leq E_\delta(0),\qquad
 E_\delta(t)=\int_\Omega\left(\frac12r|v|^2+H_\delta(r)\right)(t)dx.
\end{equation}
\end{proposition}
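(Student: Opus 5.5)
The plan follows the Feireisl--Novotn\'y--Petzeltov\'a construction \cite{FNP,Lions}, with the isothermal part of the pressure treated as a harmless lower-order term. Fix $\delta>0$ and $\beta\ge6$, so that $p_\delta(r)=r+\delta r^\beta$ is dominated by the artificial pressure.

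We work with the standard three-level approximation. The first level is a Faedo--Galerkin scheme for the velocity in finite-dimensional spaces $X_N\subset C_c^\infty$-type bases. The second level adds parabolic regularization $\partial_t r+\operatorname{div}(rv)=\varepsilon\Delta r$ with homogeneous Neumann condition, together with the compensating term $\varepsilon\nabla r\cdot\nabla v$ in the momentum equation. The initial data $r_0,v_0$ are smooth and $0<c\le r_0\le C$, so the parabolic problem is solvable by the classical fixed-point argument. It gives a positive smooth density and the Galerkin energy identity with dissipation $\varepsilon\int H_\delta''(r)|\nabla r|^2\ge0$.

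We then pass $N\to\infty$ at fixed $\varepsilon$. The resulting energy inequality controls $\sup_t\int H_\delta(r)$, and hence $r\in L^\infty_tL^\beta_x$. It also controls $\sqrt r\,v\in L^\infty_tL^2$ and $v\in L^2_tW^{1,2}_0$; the latter uses the coercivity \eqref{app:coercivity}, which holds even when $\mu+\lambda=0$. Testing the momentum equation with $\mathcal B[r^\theta-\overline{r^\theta}]$, where $\mathcal B$ is the Bogovskii operator on the $C^2$ domain, yields the improved integrability $r\in L^{\beta+1}(Q_T)$ uniformly in $\varepsilon$. In 2D the convective term is handled with $\beta\ge6$ and Sobolev embedding. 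Lower-order terms coming from the linear pressure $r$ are bounded by the $L^\beta$ energy bound.

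The main obstacle is the limit $\varepsilon\to0$, where strong convergence of $r_\varepsilon$ must be recovered. We follow \cite{FNP}. First we establish the effective viscous flux identity $\overline{p_\delta(r)T_k(r)}-\overline{p_\delta(r)}\,\overline{T_k(r)}=\nu(\overline{T_k(r)\operatorname{div}v}-\overline{T_k(r)}\operatorname{div}v)$ by localized div--curl/commutator arguments. Next we obtain the oscillation defect bound $\sup_k\limsup\|T_k(r_\varepsilon)-T_k(r)\|_{L^{\beta+1}}<\infty$. This bound comes from the monotonicity of $p_\delta$; here $p_\delta'\ge\delta\beta r^{\beta-1}$ plays the role of $\gamma=\beta>1$. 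With these in hand, the limit density satisfies the renormalized equation by the classical interpolation. The exponent $(\beta-1)/\beta>0$ makes the interpolation work, so the source removal closes. Comparing the truncated entropies $L_k(r)=r\int^r T_k(s)s^{-2}\,ds$ then gives $\overline{r\log r}=r\log r$, hence strong convergence in $L^1$ and, by the $L^{\beta+1}$ bound, in $L^p$ for $p<\beta+1$.

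It remains to collect the stated conclusions. Renormalization for $b$ with compactly supported derivative follows from DiPerna--Lions, since $r\in L^2$ and $v\in L^2W^{1,2}$. For the quadratic class \eqref{app:PWclass} we use $r\in L^{\beta+1}\subset L^{4}$ together with $\beta\ge6$. In that case $b(r),\ rb'(r)\in L^2_{loc}$, and we pass to the limit in the truncations $b\circ T_k$. The energy inequality \eqref{app:fixedenergy} follows by weak lower semicontinuity, using the convexity of $H_\delta$ and of $(r,m)\mapsto|m|^2/r$. The weak time continuity follows from the equations, and global existence comes from a diagonal argument in $T$.
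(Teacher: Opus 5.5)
There is a genuine gap in your $\varepsilon\to0$ step. You transplant the truncation machinery that \cite{FNP} use for the limit $\delta\to0$: the cap flux identity, the oscillation defect, interpolation, and truncated entropies. That machinery requires the approximate densities to satisfy the renormalized equation exactly, and at the parabolic level they do not. For the cap one has
\[
 \partial_tT_k(r_\varepsilon)+\operatorname{div}\bigl(T_k(r_\varepsilon)v_\varepsilon\bigr)
 +a_k(r_\varepsilon)\operatorname{div}v_\varepsilon
 =\varepsilon\Delta T_k(r_\varepsilon)-\varepsilon T_k''(r_\varepsilon)|\nabla r_\varepsilon|^2 .
\]
After the inverse divergence, the first term gives $\varepsilon\nabla T_k(r_\varepsilon)\to0$ in $L^2$, which is harmless. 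The second term is nonnegative and bounded in $L^1(Q_T)$, because the dissipation bounds $\varepsilon\int_{Q_T}|\nabla r_\varepsilon|^2$. However, it is not small as $\varepsilon\to0$ with $k$ fixed.

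In the flux argument this term enters as $\int\phi\,r_\varepsilon v_\varepsilon\cdot\nabla\Delta^{-1}[\varepsilon T_k''(r_\varepsilon)|\nabla r_\varepsilon|^2]$. The inverse divergence of an $L^1$ function lies only in $L^1_t$ with weak-$L^2_x$ values. The momentum, on the other hand, is bounded only in $L^\infty_tL^{2\beta/(\beta+1)}_x$ and in $L^2_tL^q_x$. So this pairing is neither uniformly bounded nor passable to the limit, and the limiting cap equation carries an unidentified defect measure. The truncated flux identity you state is therefore not available at this level, and the truncated-entropy comparison that depends on it does not close.

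The missing idea is that no truncation is needed at fixed $\delta$; this is the paper's route, following \cite[Section~3]{FNP}.
\begin{itemize}
\item The Bogovski\u{\i} bound $\int p_\delta(r_\varepsilon)r_\varepsilon\le C_\delta$ puts the limit in $L^2$, so DiPerna--Lions renormalizes it directly.
\item The flux identity is proved for the full density, $\overline{p_\delta(r)r}-\nu\,\overline{r\operatorname{div}v}=\overline p\,r-\nu r\operatorname{div}v$. Here diffusion contributes only $\varepsilon\nabla r_\varepsilon\to0$.
\item The convex logarithm at the parabolic level gives an inequality with the favorable sign, $\limsup\int_0^\tau\!\int r_\varepsilon\operatorname{div}v_\varepsilon\le\int_0^\tau\!\int r\operatorname{div}v$.
\item Monotonicity of $p_\delta$ (Minty) then yields $\overline p=p_\delta(r)$, and uniform convexity gives strong $L^\beta$ convergence.
\end{itemize}
Your oscillation-defect and interpolation steps are then superfluous, as your own closing appeal to DiPerna--Lions already indicates.

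Two smaller points. First, the Schauder fixed point of \cite{FNP} assumes a $C^{2+\nu}$ boundary and data with $\partial_{\boldsymbol n}r_0=0$. On a $C^2$ domain the paper instead makes the data constant near $\partial\Omega$ and uses $L^p$ maximal regularity. Second, passing $N\to\infty$ requires strong convergence of $\nabla r_N$ to identify $\varepsilon(\nabla r_N\cdot\nabla)v_N$.
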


\begin{proof}
We adapt the fixed high-pressure scheme of \cite[Sections~2--3]{FNP}
to the planar domain and logarithmic physical potential. The monotone
pressure law is also covered, on smooth bounded domains, by
\cite[Theorem~7.2, p.~180, Section~7.5, p.~205, and Example~7.1, p.~207]{Lions}.
Appendix~\ref{appendix:fixed} gives the $C^2$ parabolic input and the
passages $N\to\infty$, then $\varepsilon\downarrow0$ at fixed $\delta$,
retaining the prescribed initial energy and the stated renormalization
class. Constants in this construction may depend on $\delta$.
\end{proof}

\subsection{The Radon estimate and its energy dependence}

Apply Proposition~\ref{app:fixed} to \eqref{app:delta} and choose one
global solution $(\rho_n,u_n)$ for each $n$. We retain this sequence
throughout the paper. By \eqref{app:initialenergy},
\eqref{app:fixedenergy} and \eqref{app:coercivity},
\begin{equation}\label{app:uniformenergy}
 \underset{0<t<T}{\operatorname{ess\,sup}}
 \int_\Omega\left(\rho_n|u_n|^2+\rho_n\log(1+\rho_n)
                    +\delta_n\rho_n^\beta\right)dx
 +\int_{Q_T}|\nabla u_n|^2\,dx\,dt\le C.
\end{equation}
Mass, $\|m_{0,n}\|_1$ and $\|p_n(\rho_n)\|_{L^\infty_tL^1_x}$
are therefore uniformly bounded.

\begin{lemma}[Localized estimates of Plotnikov--Weigant]\label{app:PW}
Let $\zeta\in C_c^\infty(\Omega)$ be nonnegative and
$0<\alpha<1/6$. The selected solutions satisfy
\begin{align}
 \sup_{\omega\in\mathbb S^1}\int_0^T\!\int_{\mathbb R}
 \left(\int_{\{\omega\cdot x=\tau\}}\zeta(x)\rho_n(t,x)\,d\ell\right)^2
 d\tau\,dt&\leq C_{\zeta,T},\label{app:Radon}\\
 \|\zeta\rho_n\|_{L^2(0,T;H^{-1/2}(\mathbb R^2))}
 &\leq C_{\zeta,T},\label{app:halfderivative}\\
 \int_{Q_T}\zeta^2\bigl(\rho_n+\delta_n\rho_n^\beta\bigr)
                         \rho_n^\alpha\,dx\,dt
 &\leq C_{\zeta,\alpha,T}.\label{app:localgain}
\end{align}
The zero extension is used in \eqref{app:Radon}--\eqref{app:halfderivative}.
The constants depend only on the uniform initial augmented energy,
mass, $T$, $\Omega$, $\mu$, $\lambda$, the fixed $\beta$, and the displayed
localization and exponent. They require no common bound on
$\|\rho_{0,n}\|_\infty$, $(\inf\rho_{0,n})^{-1}$, or
$\|u_{0,n}\|_{W^{1,2}}$.
\end{lemma}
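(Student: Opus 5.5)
The plan is to reproduce the Plotnikov--Weigant argument \cite{PW} for the selected artificial-pressure solutions $(\rho_n,u_n)$ while tracking every constant, so that only the quantities listed in the statement enter. Each $(\rho_n,u_n)$ has the fixed-parameter regularity of Proposition~\ref{app:fixed}: $\rho_n\in L^{\beta+1}(Q_T)$ and renormalization in the class \eqref{app:PWclass}. This justifies all test functions used below at fixed $n$. The only uniform inputs are \eqref{app:uniformenergy}, the mass, and $\|m_{0,n}\|_1$, all of which are controlled by $E_{0,n}\to E_0$.

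\textbf{Step 1: the Radon bound \eqref{app:Radon}.} Fix a direction $\omega$ and a one-dimensional primitive in the variable $\tau=\omega\cdot x$. I test the momentum equation with
\[
 \varphi(t,x)=\zeta(x)\,\omega\int_{-\infty}^{\omega\cdot x}\zeta(y)\rho_n(t,y)\,\text{restricted to the line }\{\omega\cdot y=s\}\,ds,
\]
that is, $\zeta\omega$ multiplied by the cumulative line-integral function of $\zeta\rho_n$. The pressure term $\rho_n\operatorname{div}\varphi$ then produces the positive quantity $\int(\int_{\omega\cdot x=\tau}\zeta\rho_n)^2\,d\tau$, up to commutator terms involving $\nabla\zeta$. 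These commutators are bounded by mass times $\|\zeta\rho_n\|_{L^1}$. The artificial pressure $\delta_n\rho_n^\beta$ contributes a term of the same nonnegative sign, which I simply drop.

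The remaining terms are handled as follows. The time-derivative term is moved onto $\varphi$ through the continuity equation. This creates boundary terms at $t=0,T$, bounded by $\|m_n\|_{L^\infty L^1}\|\varphi\|_\infty\lesssim E_0$ and mass, and a flux term $\int m_n\cdot\partial_t\varphi$. Since $\partial_t\varphi$ involves the line integral of $\operatorname{div}(\zeta\rho_nu_n)$, the flux term reduces to a quadratic expression in $\rho_nu_n$ of kinetic type, bounded by $\|\sqrt{\rho_n}u_n\|_{L^\infty L^2}^2$. The convective term $\rho_nu_n\otimes u_n:\nabla\varphi$ is treated the same way. Finally, the viscous term uses $\|\nabla u_n\|_{L^2}$ against $\nabla\varphi$, whose worst part is $\zeta\rho_n$ itself. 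This part is absorbed via the two-dimensional embedding $\|\zeta\rho_n\|_{H^{-1/2}}^2\approx\sup_\omega$ of the Radon $L^2$ norm. Here I follow \cite{PW} exactly and only check that each bound uses energy, mass and $\mu,\lambda$.

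\textbf{Step 2: the $H^{-1/2}$ bound \eqref{app:halfderivative}.} This follows from Step 1 by the Fourier slice theorem, since $\|f\|_{\dot H^{-1/2}(\R^2)}^2\simeq\int_{\mathbb S^1}\|\mathcal R_\omega f\|_{L^2_\tau}^2\,d\omega$. The low frequencies are controlled by the mass.

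\textbf{Step 3: the local gain \eqref{app:localgain}.} I test the momentum equation with $\zeta^2\nabla\Delta^{-1}(\zeta\rho_n^\alpha)$, in the standard Bogovskii/Lions fashion. This is legitimate by renormalization with $b(r)=r^\alpha$, which lies in the class \eqref{app:PWclass}. The pressure term yields $\int\zeta^2p_n(\rho_n)\rho_n^\alpha$. The other terms need to be controlled using Steps 1--2 and the smallness $\alpha<1/6$:
\begin{itemize}[leftmargin=*]
 \item The convective term $\rho_n|u_n|^2\,|\nabla^2\Delta^{-1}\rho_n^\alpha|$ is estimated with $\rho_n^\alpha\in L^{1/\alpha}$ and an interpolation between the kinetic bound and the $H^{-1/2}$ bound.
 \item The time derivative involves $\partial_t\rho_n^\alpha$, which contains $(\alpha-1)\rho_n^\alpha\operatorname{div}u_n$ and $\operatorname{div}(\rho_n^\alpha u_n)$, both controlled in $L^2_tL^{q}$.
\end{itemize}

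\textbf{Main obstacle.} The hard part is the convective/time-derivative term in Step 3, where the threshold $\alpha<1/6$ arises. To close it, I would use the Radon bound to place $\rho_n u_n$ in a negative Sobolev space through $\sqrt{\rho_n}\cdot\sqrt{\rho_n}u_n$. I would then apply Hölder's inequality with exponents chosen so that only $\|\sqrt{\rho_n}u_n\|_{L^\infty L^2}$ and the Step 1 constant appear. Throughout, I would verify that no bound on $\|\rho_{0,n}\|_\infty$, $\inf\rho_{0,n}$ or $\|u_{0,n}\|_{W^{1,2}}$ is invoked; initial data enter only through $E_{0,n}$ and $\|m_{0,n}\|_1$.
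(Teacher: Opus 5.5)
Your overall plan is the paper's: re-run the Plotnikov--Weigant argument at fixed $n$ and track the constants. However, Step 1 as written does not close, and the failure is exactly at the key idea of that argument. Write $\varphi=\zeta\,\omega\,\Phi(t,\omega\cdot x)$ with $\Phi(t,\tau)=\int_{-\infty}^{\tau}\mathcal R_\omega(\zeta\rho_n)(t,s)\,ds$. The continuity equation gives $\partial_t\Phi=-\mathcal R_\omega(\zeta\rho_nu_n\cdot\omega)$ up to $\nabla\zeta$ terms. So the principal parts of the time and convective terms are
\[
 -\int_0^T\!\!\int_{\mathbb R}\bigl[\mathcal R_\omega(\zeta\rho_nu_n\cdot\omega)\bigr]^2\,d\tau\,dt
 \qquad\text{and}\qquad
 \int_0^T\!\!\int_{\mathbb R}\mathcal R_\omega(\zeta\rho_n)\,
 \mathcal R_\omega\bigl(\zeta\rho_n(u_n\cdot\omega)^2\bigr)\,d\tau\,dt .
\]
Neither is bounded by $\|\sqrt{\rho_n}u_n\|_{L^\infty L^2}^2$ and the mass. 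The first is a directional $H^{-1/2}$ norm of the momentum. The second needs control of $\sup_\tau\mathcal R_\omega(\zeta\rho_n)$. A concrete obstruction: take a density of height $(\epsilon\log(1/\epsilon))^{-1}$ on a strip of width $\epsilon$ parallel to the lines $\{\omega\cdot x=\tau\}$, with $u_n\cdot\omega=1$ there. Entropy and kinetic energy stay bounded, while both integrals blow up as $\epsilon\to0$.

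What actually closes the estimate is that the \emph{sum} of these two principal parts is nonnegative. This follows from the Cauchy--Schwarz inequality on each line with respect to $\zeta\rho_n\,d\ell$. The sum sits on the same side of the tested identity as the pressure term, so it is discarded exactly as you discard $\delta_n\rho_n^\beta$. This is the paper's ``principal inertia remainder is nonnegative by the linewise Cauchy--Schwarz inequality''. Only after this does one reach $Z\le C(M,K,P,J,T)+C_\zeta\sqrt D\sqrt Z$. Since your Steps 2 and 3 rest on Step 1, the proposal does not close without this sign argument.

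There are also secondary problems. In the viscous term, the principal part of $\nabla\varphi$ is $\zeta\,\omega\otimes\omega\,\mathcal R_\omega(\zeta\rho_n)(t,\omega\cdot x)$, not $\zeta\rho_n$; pairing $\nabla u_n$ with $\zeta\rho_n$ would require $\rho_n\in L^2$. Its $L^2(Q_T)$ norm is at most $C_\zeta Z^{1/2}$ for the same fixed $\omega$, so Young's inequality closes directly, with no appeal to $H^{-1/2}$ or a supremum over directions.

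In Step 3, $r^\alpha$ is not $C^2$ at $r=0$, so it is not in the admissible renormalization class; use $\psi(r)=(1+r)^\alpha-1$ as the paper does. More substantively, the time-derivative and convective terms cannot be estimated term by term as you propose. At this stage $\rho_nu_n$ and $\rho_n|u_n|^2$ have essentially only $L^\infty_tL^1_x$ control, and Riesz transforms are not bounded on $L^\infty$. In the paper, following PW, the critical terms go through momentum potentials $F$ satisfying
\[
 |F|\le C\sqrt{I_1(\zeta\rho_n)\,I_1(\zeta\rho_n|u_n|^2)} .
\]
This bound, with the $H^{-1/2}$ estimate and kinetic energy, gives $F\in L^4_tL^q$ for $q<8/3$. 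Interpolating with a Sobolev multiplication bound then gives $F\in L^2_tL^q$ for every $2<q<3$. The H\"older pairing with an $L^2$ factor and an $L^{1/\alpha}$ factor, $\frac1q+\frac12+\alpha=1$ with $q=2/(1-2\alpha)<3$, is precisely where $\alpha<1/6$ arises. Your final paragraph points in this direction but does not supply the structure or the exponents.
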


\begin{proof}
Apply the proofs of \cite[Theorem~4.1, p.~9, Proposition~5.1,
p.~17, and Theorem~6.1, p.~22]{PW} to these fixed-pressure solutions.\footnote{Numbered
references to \cite{PW} in Section~2 and Appendix~\ref{appendix:PW}
follow the authors' manuscript,
\href{https://arxiv.org/abs/1407.2743v1}{arXiv:1407.2743v1}.}
Appendix~\ref{appendix:PW} traces their constants to mass, kinetic
energy, dissipation, total pressure and initial momentum. It also
records the finite potential exponents and removal of spatial
mollification at fixed $n$, establishing the stated uniformity.
\end{proof}

\section{Momentum compactness and the distributional limit}\label{sec:momentum}

Fix $T>0$ and write
\[
 m_n=\rho_nu_n,\qquad F_n=\rho_nu_n\otimes u_n,\qquad
 p_n=\rho_n+\delta_n\rho_n^\beta,\qquad S_n=\mathbb S(u_n).
\]
We use the uniform energy, the equations and the local density gain at
$\alpha=1/8$ to identify momentum and convection before effective flux.

\subsection{Weak compactness and finite mixed exponents}

The entropy and kinetic bounds give, for measurable $A\subset\Omega$
and $R\ge2$,
\begin{align}
 \sup_{n,t}\int_A\rho_n(t)\,dx
 &\le R|A|+\frac{C}{\log(1+R)},\label{mom:UI}\\
 \sup_{n,t}\int_A|m_n(t)|\,dx
 &\le C\left(\sup_{n,t}\int_A\rho_n(t)\,dx\right)^{1/2}.
 \label{mom:mUI}
\end{align}
Here and below the supremum in time is essential. Applying these
bounds to time slices also gives uniform integrability on $Q_T$.
After extraction,
\begin{equation}\label{mom:weak}
 \rho_n\rightharpoonup\rho,\quad m_n\rightharpoonup m
 \quad\text{in }L^1(Q_T),\qquad
 u_n\rightharpoonup u\quad\text{in }L^2_tW^{1,2}_0(\Omega).
\end{equation}
In particular, the weak physical pressure is already $\rho$.

\begin{lemma}[Finite local exponents]\label{mom:exponents}
For every $D\Subset\Omega$,
\begin{equation}\label{mom:mixed}
 \begin{aligned}
  \rho_n&\text{ is bounded in }L^4(0,T;L^{32/31}(D)),\\
  m_n&\text{ is bounded in }L^3(0,T;L^{96/95}(D)),\\
  F_n&\text{ is bounded in }L^{6/5}(0,T;L^{192/191}(D)).
 \end{aligned}
\end{equation}
\end{lemma}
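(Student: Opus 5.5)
The plan is to interpolate the two uniform bounds already available on $Q_D$: the energy bound \eqref{app:uniformenergy} and the local gain \eqref{app:localgain} at $\alpha=1/8$. The momentum bound then follows from Hölder's inequality with the kinetic energy, and the convection bound from Hölder's inequality with the planar Sobolev embedding for $u_n$. Fix $D\Subset\Omega$ and choose $0\le\zeta\in C_c^\infty(\Omega)$ with $\zeta\equiv1$ on $D$.

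\emph{Density.} From \eqref{app:localgain} with $\alpha=1/8$, dropping the nonnegative artificial-pressure part, $\int_{Q_D}\rho_n^{9/8}\le C$. Hence $\rho_n$ is bounded in $L^{9/8}(0,T;L^{9/8}(D))$. Mass conservation, or \eqref{app:uniformenergy}, gives a bound in $L^\infty(0,T;L^1(D))$. Mixed-norm Hölder interpolation with weight $\theta$ on the $L^{9/8}$ endpoint gives
\[
 \frac1q=\frac{8\theta}{9},\qquad \frac1p=1-\theta+\frac{8\theta}{9}=1-\frac{\theta}{9}.
\]
Taking $\theta=9/32$ yields $q=4$ and $p=32/31$, which is the first line of \eqref{mom:mixed}.

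\emph{Momentum.} I will write $m_n=\sqrt{\rho_n}\,(\sqrt{\rho_n}u_n)$. The second factor is bounded in $L^\infty_tL^2_x$ by \eqref{app:uniformenergy}. For the first factor, the density bound gives $\sqrt{\rho_n}$ bounded in $L^8(0,T;L^{64/31}(D))$. Hölder's inequality then bounds $m_n$ in $L^8(0,T;L^{64/63}(D))$. Since $T<\infty$ and $|D|<\infty$, and since $8>3$ and $64/63>96/95$, this space embeds into $L^3(0,T;L^{96/95}(D))$. This proves the second line, with room to spare.

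\emph{Convection.} I will write $F_n=m_n\otimes u_n$. The 2D embedding $W^{1,2}_0(\Omega)\hookrightarrow L^{q}(\Omega)$ holds for every $q<\infty$, so with \eqref{app:uniformenergy} $u_n$ is bounded in $L^2(0,T;L^{192}(\Omega))$. Hölder's inequality with exponents
\[
 \frac13+\frac12=\frac56\ \text{in time},\qquad \frac{95}{96}+\frac1{192}=\frac{191}{192}\ \text{in space}
\]
gives the third line. The only point needing care is that the Sobolev constant at $q=192$ is finite and independent of $n$, which is immediate because $q$ is fixed.

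No step is a real obstacle; the argument is exponent bookkeeping. The main thing to check is that the local gain \eqref{app:localgain} at $\alpha=1/8<1/6$ is exactly the input used, so that all constants depend only on the quantities listed in Lemma~\ref{app:PW}. In particular, nothing should depend on $\delta_n$ or on the regularity of the initial data.
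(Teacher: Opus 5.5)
Your proof is correct. The density and convection steps coincide with the paper's. The paper uses the same interpolation between $L^\infty_tL^1_x$ and the $L^{9/8}(Q_D)$ bound from the local gain, with weight $9/32$. It also uses the same H\"older pairing of $m_n$ against $u_n\in L^2_tL^{192}_x$.

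The momentum step uses a different splitting. The paper writes $|m_n|=\rho_n^{2/3}(\rho_n|u_n|^2)^{1/3}|u_n|^{1/3}$ and uses $u_n\in L^2_tL^{32}_x$ from the planar Sobolev embedding. This lands exactly on $L^3_tL^{96/95}_x(D)$. Your two-factor split $\sqrt{\rho_n}\cdot\sqrt{\rho_n}u_n$ needs only the kinetic energy and the density bound. It gives the strictly stronger bound $L^8_tL^{64/63}_x(D)$, and the stated space then follows by the finite-measure embedding.

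Your route is shorter and avoids the Sobolev input for the velocity at that step. The paper's split simply produces the target exponent directly; that exponent is the one later dualized against $W^{1,192/97}_0$ in Proposition~\ref{mom:compact} and Section~\ref{sec:flux}. Since your bound implies the stated one, nothing downstream is lost.
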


\begin{proof}
Since $p_n\ge\rho_n$, \eqref{app:localgain} gives a uniform
$L^{9/8}(Q_D)$ bound. Interpolation with mass yields
\[
 \|\rho_n\|_{L^4_tL^{32/31}_x(D)}
 \le\|\rho_n\|_{L^\infty_tL^1_x(D)}^{23/32}
      \|\rho_n\|_{L^{9/8}(Q_D)}^{9/32}\le C_D.
\]
Using kinetic energy and the planar embedding
$W^{1,2}_0(\Omega)\hookrightarrow L^q(\Omega)$ for every finite $q$,
H\"older's inequality gives
\begin{align*}
 \|m_n\|_{L^3_tL^{96/95}_x(D)}
 &\le\|\rho_n\|_{L^4_tL^{32/31}_x(D)}^{2/3}
       \|\rho_n|u_n|^2\|_{L^\infty_tL^1_x(D)}^{1/3}
       \|u_n\|_{L^2_tL^{32}_x(D)}^{1/3}\le C_D,\\
 \|F_n\|_{L^{6/5}_tL^{192/191}_x(D)}
 &\le\|m_n\|_{L^3_tL^{96/95}_x(D)}
       \|u_n\|_{L^2_tL^{192}_x(D)}\le C_D.
\end{align*}
\end{proof}

\subsection{The compact momentum space}

\begin{proposition}[Local negative-Sobolev compactness]\label{mom:compact}
For each $\chi\in C_c^\infty(\Omega)$ and each smooth ball $B$ containing
its support, with the localized functions extended by zero, a subsequence
satisfies
\begin{align}
 \chi\rho_n&\longrightarrow\chi\rho
       &&\text{strongly in }L^2(0,T;H^{-1}(B)),\label{mom:rho-compact}\\
 \chi m_n&\longrightarrow\chi m
       &&\text{strongly in }L^2(0,T;W^{-1,192/95}(B)).\label{mom:exact}
\end{align}
In particular localized momentum converges strongly in $L^2_tH^{-1}_x$.
\end{proposition}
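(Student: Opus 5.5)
The plan is an Aubin--Lions type argument: uniform bounds in a space compactly embedded into the target, plus uniform time-derivative bounds in some large negative Sobolev space. For \eqref{mom:rho-compact} the spatial compactness comes from \eqref{app:halfderivative}. The sequence $\chi\rho_n$ is bounded in $L^2(0,T;H^{-1/2}(\mathbb R^2))$ and supported in a fixed compact subset of $B$. Multiplying by a cutoff equal to one on $\supp\chi$ and supported in $B$, it is therefore bounded in $L^2(0,T;H^{-1/2}_0(B))$. This space embeds compactly into $H^{-1}(B)$, as the dual of the compact embedding $H^1_0(B)\hookrightarrow H^{1/2}(B)$.

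For the time derivative, the continuity equation gives
\[
\partial_t(\chi\rho_n)=\operatorname{div}(\chi m_n)-m_n\cdot\nabla\chi .
\]
By Lemma~\ref{mom:exponents}, the right-hand side is bounded in $L^3(0,T;W^{-1,96/95}(B))$, and hence in $L^1(0,T;H^{-s}(B))$ for $s$ large by Sobolev embedding. The Aubin--Lions--Simon lemma, with the chain $H^{-1/2}_0\Subset H^{-1}\subset H^{-s}$, then gives strong convergence in $L^2(0,T;H^{-1}(B))$. The limit is identified as $\chi\rho$ by \eqref{mom:weak}.

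For \eqref{mom:exact} the spatial regularity of $m_n$ is weaker, so I will use a product/commutator argument instead. Write $\chi m_n=\chi\rho_n u_n$. The time derivative comes from the momentum equation:
\[
\partial_t(\chi m_n)=\operatorname{div}(\chi F_n)-F_n\nabla\chi-\nabla(\chi p_n)+p_n\nabla\chi+\operatorname{div}(\chi S_n)-S_n\nabla\chi .
\]
Each term is bounded in $L^1(0,T;W^{-1,1}(B))$, or better. For $F_n$ this uses Lemma~\ref{mom:exponents}; for $p_n$ it uses \eqref{app:uniformenergy} (bounded in $L^\infty_tL^1_x$); for $S_n$ it uses the energy. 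All of these lie in $L^1(0,T;H^{-s}(B))$ for $s>2$.

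Spatially, $m_n$ is bounded in $L^3_tL^{96/95}_x$, which embeds into $L^3_tW^{-1,q}_x$ for $q<96/95\cdot 2/(2-96/95)$, a value well above $192/95$. Since $L^{96/95}(B)$ is compactly embedded in $W^{-1,q}(B)$ for every $q<96/47$, and $192/95<96/47$, Aubin--Lions--Simon yields strong convergence of $\chi m_n$ in $L^2(0,T;W^{-1,192/95}(B))$. Here I use that $L^3$ in time dominates $L^2$ on a finite interval.

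The limit is $\chi m$ by \eqref{mom:weak}. The closing claim about $L^2_tH^{-1}_x$ follows because $W^{-1,192/95}(B)\hookrightarrow H^{-1}(B)$ on a bounded domain, since $192/95>2$. The identification $m=\rho u$ belongs to the theorem rather than to this proposition. It would follow by pairing the strong convergence \eqref{mom:rho-compact} with $u_n\rightharpoonup u$ in $L^2_tH^1_0$.

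The main obstacle is checking the compact embedding exponent in the momentum step. The target $192/95$ must lie strictly below the Sobolev dual exponent attached to $L^{96/95}$ in 2D, which is $96/47$, and it does. A second point is justifying a uniform time-derivative bound despite the boundary term $p_n\nabla\chi$. If the crude $L^1$ bounds prove awkward for the Simon lemma (which needs $\partial_t$ in $L^1$ of some Banach space containing the target), I would instead measure $\partial_t$ in the dual of $W^{3,2}_0(B)\subset C^1$. There all terms are bounded by measures paired with $C^1$ tests, and Simon's lemma still applies.
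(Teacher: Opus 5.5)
Your proof is correct, and for the momentum it is the paper's argument. You use the same compact embedding $L^{96/95}(B)\Subset W^{-1,192/95}(B)$, the adjoint of $W^{1,192/97}_0(B)\Subset L^{96}(B)$. You bound $\partial_t(\chi m_n)$ from the localized momentum equation in a high negative Sobolev space, apply Simon's lemma, and identify the limit through \eqref{mom:weak}. The paper measures $\partial_t(\chi m_n)$ in $L^2(0,T;H^{-3}(B))$, using $F_n,p_n\in L^\infty_tL^1_x$, $\mathbb S(u_n)\in L^2$ and $H^3_0(B)\hookrightarrow C^1(\overline B)$. Your bound in $L^1(0,T;H^{-s}(B))$ with $s>2$ is essentially your announced fallback and works equally well in Simon's lemma.

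The one real difference is where the density gets its spatial compactness. You use the half-derivative bound \eqref{app:halfderivative}, together with compactness of $H^{-1/2}$ distributions of fixed compact support into $H^{-1}(B)$. The paper instead uses the $L^4_tL^{32/31}_x$ bound of Lemma~\ref{mom:exponents} and $L^{32/31}(B)\Subset H^{-1}(B)$, the adjoint of $H^1_0(B)\Subset L^{32}(B)$.

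Both routes are valid for the sequence built in Section~\ref{sec:approx}, since Lemma~\ref{app:PW} holds there. The paper's choice matters for scope, however. Section~\ref{sec:momentum} is meant to use only the energy bound and the local gain at $\alpha=1/8$, which are exactly the hypotheses of Theorem~\ref{thm:stability}. The bound \eqref{app:halfderivative} is not among those hypotheses. You already invoke Lemma~\ref{mom:exponents} for the time derivative, so switching to the $L^{32/31}$ bound costs nothing and keeps the proposition available for the stability theorem.

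Two minor points. The signs in both of your localized equations are reversed; for instance, $\partial_t(\chi\rho_n)=-\operatorname{div}(\chi m_n)+m_n\cdot\nabla\chi$. This affects no bound. Also, the product/commutator argument you announce for the momentum is never used; plain Aubin--Lions--Simon suffices, as your own subsequent steps show.
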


\begin{proof}
The planar Sobolev embedding and its adjoint give
\begin{equation}\label{mom:embeddings}
 W^{1,192/97}_0(B)\Subset L^{96}(B),\qquad
 L^{96/95}(B)\Subset W^{-1,192/95}(B)\hookrightarrow H^{-3}(B).
\end{equation}
The first embedding is strictly below the critical exponent $192$.
Componentwise, the momentum equation reads
\begin{equation}\label{mom:derivative}
 \partial_t(\chi m_{n,i})
 =\partial_j[\chi(S_{n,ij}-F_{n,ij})]
  -(\partial_j\chi)(S_{n,ij}-F_{n,ij})
  -\partial_i(\chi p_n)+p_n\partial_i\chi.
\end{equation}
Since $F_n,p_n$ are bounded in $L^\infty_tL^1_x$,
$S_n$ in $L^2(Q_T)$ and $H^3_0(B)\hookrightarrow C^1(\overline B)$,
this derivative is bounded in $L^2_tH^{-3}_x$. The same is true of
$\partial_t(\chi\rho_n)=-\operatorname{div}(\chi m_n)+m_n\cdot\nabla\chi$.
Simon compactness \cite{Simon}, \eqref{mom:mixed} and
\eqref{mom:embeddings} imply \eqref{mom:exact}. For density use
$L^{32/31}(B)\Subset H^{-1}(B)\hookrightarrow H^{-3}(B)$.
Finally $W^{-1,192/95}(B)\hookrightarrow H^{-1}(B)$.
\end{proof}

The momentum space in \eqref{mom:exact} is dual to the
$W^{1,192/97}$ commutator space used in Section~\ref{sec:flux}.

\subsection{Identification before effective flux}

\begin{proposition}[Momentum and convection]\label{mom:identification}
The weak limits satisfy $m=\rho u$ almost everywhere, and for every
$D\Subset\Omega$,
\begin{equation}\label{mom:convection}
 F_n\rightharpoonup\rho u\otimes u
       \quad\text{in }L^{6/5}(0,T;L^{192/191}(D)).
\end{equation}
\end{proposition}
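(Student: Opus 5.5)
We identify the two products by the standard weak--strong pairing: a sequence converging strongly in a negative Sobolev space, paired against one converging weakly in the dual positive Sobolev space. Both times the strong factor is supplied by Proposition~\ref{mom:compact}, and the weak factor is $u_n$, bounded in $L^2_tW^{1,2}_0$.

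\emph{Step 1: $m=\rho u$.} Fix $\chi\in C_c^\infty(\Omega)$, a ball $B\supset\supp\chi$, and a test $\phi\in C_c^\infty(Q_T)$. Write
\[
\int_{Q_T}\chi\rho_n u_n\phi=\int_0^T\langle \chi\rho_n,\ \phi u_n\rangle_{H^{-1}(B),H^1_0(B)}\,dt .
\]
By \eqref{mom:rho-compact}, $\chi\rho_n\to\chi\rho$ strongly in $L^2_tH^{-1}(B)$. By \eqref{mom:weak}, $\phi u_n\rightharpoonup\phi u$ weakly in $L^2_tH^1_0(B)$. The pairing therefore converges to $\int\chi\rho u\phi$. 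Since $m_n\rightharpoonup m$ in $L^1$, we get $\chi m=\chi\rho u$ in distributions. Both sides are $L^1_{\rm loc}$ functions (note $\rho u\in L^1_{\rm loc}$ because $\rho\in L^{9/8}_{\rm loc}$ by \eqref{app:localgain} and Fatou, and $u\in L^2_tL^q_x$ for all $q<\infty$). Hence $m=\rho u$ almost everywhere.

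\emph{Step 2: convection.} For $\phi\in C_c^\infty(Q_D)$ and indices $i,j$, choose $\chi$ with $\chi\equiv1$ on $\supp\phi$. Then
\[
\int F_{n,ij}\phi=\int_0^T\langle \chi m_{n,i},\ \phi u_{n,j}\rangle\,dt .
\]
The first factor converges strongly in $L^2_tW^{-1,192/95}(B)$ by \eqref{mom:exact}. For the second factor, note that $W^{1,2}_0\hookrightarrow W^{1,192/97}_0$ on the bounded ball, since $192/97<2$. So $\phi u_{n,j}$ is bounded in $L^2_tW^{1,192/97}_0(B)$, the dual space as remarked after Proposition~\ref{mom:compact}, and converges weakly there to $\phi u_j$. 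The pairing thus converges to $\int_0^T\langle\chi m_i,\phi u_j\rangle$. By Step~1 and the integrability $m\in L^3_tL^{96/95}_x$, $u\in L^2_tL^{192}_x$, this equals $\int\rho u_iu_j\phi$.

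\emph{Step 3: upgrading to weak convergence in the stated space.} Lemma~\ref{mom:exponents} bounds $F_n$ in the reflexive space $L^{6/5}_tL^{192/191}_x(D)$. Every subsequence therefore has a weakly convergent subsequence, and its limit must equal the distributional limit $\rho u\otimes u$. Hence the whole (extracted) sequence converges weakly.

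\textbf{Main obstacle.} Everything hinges on the exponent bookkeeping matching exactly: the strong momentum space $W^{-1,192/95}$ must be dual to a space containing $W^{1,2}_0$. This is why Proposition~\ref{mom:compact} was designed with the $W^{1,192/97}$ pairing. Beyond that check, only routine density-of-test-functions arguments remain.
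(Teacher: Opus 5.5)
Your proof is correct and is essentially the paper's argument. You pair the strong negative-Sobolev compactness of $\chi\rho_n$ and $\chi m_n$ from Proposition~\ref{mom:compact} against $u_n\rightharpoonup u$, then upgrade with the reflexive bound of Lemma~\ref{mom:exponents}. The paper pairs both products in $H^{-1}(B)$--$H^1_0(B)$; your $W^{-1,192/95}$--$W^{1,192/97}_0$ pairing for the convection is the same argument, since it is the dual form of the embedding $W^{-1,192/95}(B)\hookrightarrow H^{-1}(B)$.

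One justification should be repaired. The bounds $\rho\in L^{9/8}$ and $u\in L^2_tL^q_x$ together do not give $\rho u\in L^1_{\mathrm{loc}}$, because the time exponents satisfy $8/9+1/2>1$. Use instead the bound $\rho\in L^4_tL^{32/31}_x(D)$ inherited from Lemma~\ref{mom:exponents}. It passes to the limit by weak lower semicontinuity, not Fatou, since only weak convergence is available at this stage. It gives $\rho u\in L^{4/3}_tL^1_x(D)$, exactly as in the paper.
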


\begin{proof}
The reflexive bounds in \eqref{mom:mixed} pass to $\rho$ and $m$.
Let $\varphi$ and $\Phi$ be smooth compactly supported vector and
matrix tests, and choose $\chi=1$ near their spatial supports, with
$\supp\chi\Subset B$. In time-integrated $H^{-1}(B)$--$H^1_0(B)$
duality,
\begin{equation}\label{convergence of mass in dis}
 \begin{aligned}
 \int_{Q_T}(\rho_nu_n-\rho u)\cdot\varphi
 &=\langle\chi(\rho_n-\rho),u_n\cdot\varphi\rangle
   +\langle\chi\rho,(u_n-u)\cdot\varphi\rangle,\\
 \int_{Q_T}(F_n-m\otimes u):\Phi
 &=\langle\chi(m_n-m),\Phi u_n\rangle
   +\langle\chi m,\Phi(u_n-u)\rangle.
 \end{aligned}
\end{equation}
The first terms vanish by Proposition~\ref{mom:compact}; the second
by weak Sobolev convergence of $u_n$. All pairings agree with ordinary
integrals: density--velocity products lie in $L^{4/3}_tL^1_x$, and
momentum--velocity products in $L^{6/5}_tL^1_x$, by
\eqref{mom:mixed} and finite planar Sobolev embeddings.
The first identity identifies $m=\rho u$ by comparison with
\eqref{mom:weak}. The second identifies the distributional limit of
$F_n$ as $m\otimes u=\rho u\otimes u$. Its reflexive mixed bound
then gives \eqref{mom:convection}.
\end{proof}

\begin{lemma}[Removal of artificial pressure]\label{mom:artificial}
For every $D\Subset\Omega$,
\begin{equation}\label{mom:AP}
 \delta_n\rho_n^\beta\longrightarrow0
       \quad\text{strongly in }L^1(Q_D).
\end{equation}
\end{lemma}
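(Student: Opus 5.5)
We want to show that $\delta_n\rho_n^\beta\to0$ in $L^1(Q_D)$. The plan is to split the domain according to the size of $\rho_n$ and to use two pieces of information: the local gain \eqref{app:localgain} and the energy bound on $\delta_n\rho_n^\beta$.

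Fix $D\Subset\Omega$ and choose $0\le\zeta\in C_c^\infty(\Omega)$ with $\zeta=1$ on $D$. Let $K\ge1$ be a threshold, to be sent to infinity after $n\to\infty$. We split
\[
 \int_{Q_D}\delta_n\rho_n^\beta
 =\int_{Q_D\cap\{\rho_n\le K\}}\delta_n\rho_n^\beta
  +\int_{Q_D\cap\{\rho_n>K\}}\delta_n\rho_n^\beta .
\]
The first term is at most $\delta_nK^\beta|Q_D|$. For fixed $K$ this tends to zero as $n\to\infty$, since $\delta_n\downarrow0$.

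For the second term we use the gain with exponent $\alpha=1/8$, or any fixed $\alpha\in(0,1/6)$. On the set $\{\rho_n>K\}$ we have $\rho_n^\alpha>K^\alpha$. Since $\zeta=1$ on $D$, \eqref{app:localgain} gives
\[
 \int_{Q_D\cap\{\rho_n>K\}}\delta_n\rho_n^\beta
 \le K^{-\alpha}\int_{Q_T}\zeta^2\delta_n\rho_n^{\beta}\rho_n^\alpha
 \le K^{-\alpha}C_{\zeta,\alpha,T}.
\]
This bound is uniform in $n$.

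Combining the two terms,
\[
 \limsup_{n\to\infty}\|\delta_n\rho_n^\beta\|_{L^1(Q_D)}\le C_{\zeta,\alpha,T}K^{-\alpha}
\]
for every $K$. Letting $K\to\infty$ yields \eqref{mom:AP}. Since $\delta_n\rho_n^\beta\ge0$, convergence of the $L^1$ norm to zero is exactly the claimed strong convergence.

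The main point to get right is that the constant in \eqref{app:localgain} is independent of $n$. This is the uniformity asserted in Lemma~\ref{app:PW} and also assumed in Theorem~\ref{thm:stability}. The argument needs only this uniformity together with $\delta_n\to0$, and in particular no bound on $\|\rho_{0,n}\|_\infty$. The energy bound on $\delta_n\rho_n^\beta$ in $L^\infty_tL^1_x$ is not sufficient by itself, because it gives only boundedness and not decay. The extra factor $\rho_n^\alpha$ is what produces the smallness on the high-density set.
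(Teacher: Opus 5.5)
Your proposal is correct and matches the paper's proof: split at a density threshold, bound the low part by $\delta_n K^\beta T|D|$, bound the high part by $K^{-1/8}$ times the uniform local gain \eqref{app:localgain} with $\zeta=1$ on $D$, then let $n\to\infty$ followed by $K\to\infty$. Your remark that only the $n$-uniformity of the constant in \eqref{app:localgain} is needed is exactly the point the paper relies on.
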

\begin{proof}
Using \eqref{app:localgain} with a cutoff equal to one on $D$,
\[
 \int_{Q_D}\delta_n\rho_n^\beta
 \le\delta_nR^\beta T|D|
    +R^{-1/8}\int_{Q_D}\delta_n\rho_n^{\beta+1/8}
 \le\delta_nR^\beta T|D|+C_{D,T}R^{-1/8}.
\]
Let $n\to\infty$, then $R\to\infty$.
\end{proof}
This conclusion holds on the full interval $(0,T)$ and remains valid
after multiplication by a fixed bounded density cap. It suffices for
all sequence-level momentum tests, which are spatially interior.

\subsection{Initial traces and the preliminary limit equations}

Global weak $L^1$ convergence passes the continuity equation for tests
on $\overline\Omega$, its initial term and conservation of mass.
The preceding results give the momentum equation for interior tests
with initial datum $m_0$. Convex lower semicontinuity of
$(r,z)\mapsto |z|^2/(2r)+H(r)$ against nonnegative time weights gives
the natural uniform energy bound for $(\rho,m)$; with $m=\rho u$ and
$u\in L^2_tH^1_0$, it also gives
$\rho u\otimes u\in L^\infty_tL^1_x$. The exact initial-time energy
inequality is passed in Section~\ref{sec:existence}.

Smooth spatial test pairings are continuous by the weak equations.
The inherited bounds \eqref{mom:UI}--\eqref{mom:mUI} give weakly compact,
uniformly integrable ranges and hence representatives at every time.
Uniformly bounded smooth approximations of any $L^\infty$ test in
measure have pairing errors uniformly small in time by these same
bounds; for momentum the approximations have interior support.
Consequently,
\begin{equation}\label{mom:traces}
 \rho,m\in\Cweak([0,T];L^1(\Omega)),\qquad
 \rho(0)=\rho_0,\quad m(0)=m_0,
\end{equation}
with the vector-valued interpretation for $m$. Nonlinear initial
traces are treated in Section~\ref{sec:closure}.

We use one common subsequence for a countable exhaustion of $\Omega$,
the density Young measure, all integer caps and their bounded products,
and the scalar truncated entropies below. Subsequent extractions
preserve these identifications.

\section{Truncated effective viscous flux}\label{sec:flux}

We work on $Q=(0,T)\times\Omega$ with the sequence and limits from
Section~\ref{sec:momentum}, where momentum and convection have been
identified as $m=\rho u$ and $F=\rho u\otimes u$. Strong convergence
of the density is not yet available. For a fixed cap level $k$, we
prove the effective-flux identity below using only the distributional
limit equations and the approximate renormalized continuity equations;
the source $S_k$ will remain unidentified.

Choose a smooth, increasing, concave function $T:[0,\infty)\to[0,2]$
such that $T(s)=s$ for $0\le s\le1$ and $T(s)=2$ for $s\ge3$. For
integers $k\ge1$, set
\begin{equation}\label{flux:cap}
 T_k(s)=kT(s/k),\qquad a_k(s)=sT_k'(s)-T_k(s).
\end{equation}
Thus $0\le T_k'\le1$, $T_k(s)\le\min\{s,2k\}$, and
\begin{equation}\label{flux:cap-bounds}
 a_k(s)\le0,\qquad
 |a_k(s)|\le T_k(s)\mathbf1_{\{s>k\}}\le s\mathbf1_{\{s>k\}}.
\end{equation}
The cap parameter remains fixed until the end of the proof below. Write
\[
 q_n=T_k(\rho_n),\quad d_n=\operatorname{div}u_n,\quad
 m_n=\rho_nu_n,\quad F_{n,ij}=m_{n,i}u_{n,j},\quad
 p_n=\rho_n+\delta_n\rho_n^\beta.
\]
After refining the subsequence simultaneously for all integer caps,

\begin{equation}\label{flux:weak-caps}
\begin{aligned}
 q_n&\overset{*}{\rightharpoonup}t_k
       \quad\text{in }L^\infty(Q),
 &\text{ and }\;\;\; \rho_nq_n&\rightharpoonup P_k
       \quad\text{in }L^1(Q),\\
 q_nd_n&\rightharpoonup h_k,
 &\text{ and }\;\;\; a_k(\rho_n)d_n&\rightharpoonup S_k
       \quad\text{both in }L^2(Q).
\end{aligned}
\end{equation}

We also extract weak-$*$ limits of $q_n^2$ and the bounded cap functions
needed below. The $L^1$ weak compactness of $\rho_nq_n$ follows from
$0\le\rho_nq_n\le2k\rho_n$ and uniform integrability of the density.
In particular, $P_k$ is a function, with no concentration measure.
These symbols do not yet identify nonlinear functions of $\rho$.

\begin{proposition}[Fixed-cap flux identity]\label{flux:proposition}
For every fixed integer $k\ge1$, let $t_k,P_k,h_k$ be the limits in
\eqref{flux:weak-caps}. Then, with $\nu=2\mu+\lambda>0$,
\begin{equation}\label{flux:identity}
 P_k-\rho t_k
   =\nu\bigl(h_k-t_k\operatorname{div}u\bigr)
       \qquad\text{almost everywhere in }Q,
\end{equation}
and every term belongs to $L^1(Q)$.

\end{proposition}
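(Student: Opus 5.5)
We prove \eqref{flux:identity} by the classical Lions--Feireisl effective-flux argument, localized to a ball $B$ with a cutoff. We test the approximate momentum equation with $\varphi_n=\psi(t)\chi\,\nabla\Delta_B^{-1}[\chi q_n]$. Here $\Delta_B^{-1}$ is the Dirichlet inverse Laplacian on $B$ (or equivalently $\nabla\Delta^{-1}$ on $\mathbb R^2$ after zero extension), and $\psi\in C_c^\infty(0,T)$, $\chi\in C_c^\infty(\Omega)$. We then test the limit momentum equation with the analogous $\varphi=\psi\chi\nabla\Delta^{-1}[\chi t_k]$. The time derivative of $\chi q_n$ is supplied by the approximate renormalized equation \eqref{app:fixed-renormalized} with $b=T_k$:
\[
\partial_t q_n+\operatorname{div}(q_nu_n)+a_k(\rho_n)d_n=0.
\]
In the limit, $t_k$ satisfies $\partial_t t_k+\operatorname{div}(t_ku)+S_k=0$ in $\mathcal D'$. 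This follows from the weak convergences \eqref{flux:weak-caps} together with $q_nu_n\rightharpoonup t_ku$. The latter holds because $q_n$ is bounded and converges strongly in $L^2_tH^{-1}_{\mathrm{loc}}$ by Simon's lemma: $\partial_tq_n$ is bounded in $L^2_tW^{-1,2}+L^2_tL^2$. We keep the source $S_k$ unidentified throughout.

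Expanding both identities and subtracting, the terms organize as follows. The pressure term gives $\int\psi\chi^2 p_nq_n$. By Lemma~\ref{mom:artificial} and boundedness of $q_n$, this converges to $\int\psi\chi^2P_k$. The viscous term gives $\nu\int\psi\chi^2 d_nq_n$ plus terms involving commutators of $\chi$ with Riesz operators. Those terms are compact, since $\chi q_n$ is bounded in $L^\infty$ and $\nabla\Delta^{-1}$ gains a derivative. The source term produces $\int\psi\chi m_n\cdot\nabla\Delta^{-1}[\chi a_k(\rho_n)d_n]$. This term is harmless: $a_k(\rho_n)d_n$ is bounded in $L^2$, so $\nabla\Delta^{-1}$ of it is bounded in $L^2_tW^{1,2}$. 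Paired with $\chi m_n\to\chi m$ strongly in $L^2_tW^{-1,192/95}$ (Proposition~\ref{mom:compact}), it converges to the corresponding limit term with $S_k$, provided $W^{1,2}\hookrightarrow W^{1,192/97}$ locally, which holds since $192/97<2$. The momentum time-derivative term and the remaining lower-order terms ($\psi'$, gradients of $\chi$) converge by the same pairing, using the compactness of $m_n$ and the strong $L^2_tH^{-1}$ convergence of $\chi q_n$.

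The \emph{main obstacle} is the convective commutator term
\[
\int\psi\,\chi\bigl(F_{n,ij}\,\partial_j\mathcal R_{ik}[\chi q_n]-m_{n,i}\,\mathcal R_{ij}[\chi q_nu_{n,j}]\bigr),\qquad \mathcal R=\nabla\otimes\nabla\Delta^{-1}.
\]
It must be rewritten as $\int\psi\chi u_{n,i}\bigl(\chi q_n\mathcal R_{ij}[\chi m_{n,j}]-\chi m_{n,j}\mathcal R_{ij}[\chi q_n]\bigr)$ and passed to the limit by the div--curl/commutator lemma of Feireisl. Its integrability is delicate because $m_n$ lies only in $L^3_tL^{96/95}_x$. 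First, we use the Coifman--Rochberg--Weiss type commutator estimate: $q_n\mathcal R[m_n]-m_n\mathcal R[q_n]$ is bounded in $L^{3}_tW^{\theta,s}$ for some $\theta>0$ and $s>1$ close to $1$, since $q_n\in L^\infty$ and $m_n\in L^3_tL^{96/95}$. Next, weak convergence of the pair $(q_n,m_n)$ identifies its weak limit as $t_k\mathcal R[m]-m\mathcal R[t_k]$. This step again uses Proposition~\ref{mom:identification} to replace the limits of products by $t_k u$ and $\rho u\otimes u$. Finally, compactness in $W^{-1,q}$ lets us pair this with $u_n\rightharpoonup u$ in $L^2_tW^{1,2}$. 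The mixed exponents of Lemma~\ref{mom:exponents} are chosen exactly so that all these pairings are finite. After all terms are matched, the difference between the two identities reduces to
\[
\int\psi\chi^2\bigl(P_k-\nu h_k\bigr)=\int\psi\chi^2\bigl(\rho t_k-\nu t_k\operatorname{div}u\bigr).
\]
Since $\psi$ and $\chi$ are arbitrary, \eqref{flux:identity} follows almost everywhere. Integrability holds because $P_k,\rho t_k\in L^1$ and $h_k,t_k\operatorname{div}u\in L^2$.
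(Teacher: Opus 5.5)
Most of your outline matches the paper: you test with $\phi\,\nabla\Delta^{-1}[\chi T_k(\rho_n)]$, keep $S_k$ unidentified in the localized cap equation, pass the source potential by pairing with $\chi m_n\to\chi m$ in $L^2_tW^{-1,192/95}$, and cancel it against the limiting identity. The gap is in the step you call the main obstacle. After the time and convective terms are combined, the inertial remainder is
\[
 \int\psi\chi\,m_{n,i}\bigl(u_{n,j}\mathcal R_{ij}[\chi q_n]-\mathcal R_{ij}[\chi q_nu_{n,j}]\bigr).
\]
You move $\mathcal R$ onto the momentum to reach Feireisl's form $\chi q_n\mathcal R_{ij}[\chi m_{n,j}]-\chi m_{n,j}\mathcal R_{ij}[\chi q_n]$. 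Two things then fail.

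\textbf{The regularity gain is false.} When both factors lie only in Lebesgue spaces, $vR_{ij}[w]-wR_{ij}[v]$ gains no derivative, so the bound in $L^3_tW^{\theta,s}$ with $\theta>0$ does not hold. For $v=\phi\cos(Nx_1)$ and $w=\phi\cos(Nx_2)$ one has $vR_{11}[w]-wR_{11}[v]=-\phi^2\cos(Nx_1)\cos(Nx_2)+o(1)$. This is bounded in every $L^s$ but of order $N^\theta$ in $W^{\theta,s}$. Coifman--Rochberg--Weiss concerns $[b,R]$ with $b\in BMO$, which is a different operator and gives no smoothing either.

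\textbf{Fixed-time convergence is unavailable at $\gamma=1$.} The div--curl lemma yields only weak convergence. Feireisl applies it at each fixed time and upgrades it to the strong $L^2_tH^{-1}$ convergence needed against the merely weakly convergent $u_n$. That upgrade uses $L^s\Subset H^{-1}$ together with a uniform-in-time bound $m_n\in L^\infty_tL^q_x$, $q>1$; for $\gamma>1$ this is $q=2\gamma/(\gamma+1)$. At $\gamma=1$ the energy gives only $m_n\in L^\infty_tL^1_x$, where the lemma fails. The bound $L^3_tL^{96/95}_x$ is integrated in space--time, so it gives neither fixed-time convergence nor time compactness of the product. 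As proposed, this term does not pass to the limit, and this is exactly the endpoint obstruction.

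The fix is not to rewrite. Keep $\mathcal C_{n,i}=[u_{n,j},R_{ij}]b_n$ with $b_n=\chi q_n$, so that the derivative falls on the velocity. The Coifman--Meyer estimate $\|\nabla[v,R_{ij}]w\|_{L^{192/97}}\le C\|\nabla v\|_{L^{2}}\|w\|_{L^{192}}$ then bounds $\mathcal C_n$ in $L^2_tW^{1,192/97}$. Its limit is identified as $[u_j,R_{ij}]b$ by self-adjointness: pair $b_n\to b$ strongly in $L^2_tH^{-1}$ with $R_{ij}(\eta u_{n,j})$ and $u_{n,j}R_{ij}\eta$, which converge weakly in $L^2_tH^1$. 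Then $\int\phi\,m_n\cdot\mathcal C_n$ passes by the same $W^{-1,192/95}$--$W^{1,192/97}_0$ duality you already use for the source potential.

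A smaller omission concerns the pressure cutoff term $\int\psi\,p_n\nabla\chi\cdot\nabla\Delta^{-1}[\chi q_n]$. It is not covered by ``the same pairing'', because $p_n$ converges only weakly in $L^1$. You need locally uniform convergence of the potential. This follows from its $L^\infty_tW^{1,r}$ bound with $r>2$ and the $L^2_tL^r$ bound on its time derivative.
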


\noindent\textit{Proof of Proposition~\ref{flux:proposition}.}
The proof occupies the following four subsections.

\subsection{Localized truncation equations and their potentials}

\paragraph{The localized cap equation.}
Fix $\phi\in C_c^\infty(Q)$ and choose $\chi\in C_c^\infty(\Omega)$,
with $0\le\chi\le1$, equal to one near the spatial projection of
$\operatorname{supp}\phi$. This cutoff gives the cap compact spatial
support for the whole-space potential and leaves it unchanged near
the support of the test function. Extend $u_n$ and $b_n:=\chi q_n$ by zero
to $\mathbb R^2$. Applying \eqref{app:fixed-renormalized} to
$(\rho_n,u_n)$ with $b=T_k$ and multiplying by $\chi$, we obtain
in $\mathcal D'((0,T)\times\mathbb R^2)$
\begin{equation}\label{flux:localized-cap}
 \partial_tb_n+\operatorname{div}(b_nu_n)=g_n,
 \qquad g_n=-\chi a_k(\rho_n)d_n+q_nu_n\cdot\nabla\chi.
\end{equation}
Since $|q_n|,|a_k(\rho_n)|\le2k$ and $u_n$ is bounded in
$L^2(0,T;H^1_0(\Omega))$, both terms of $g_n$ are uniformly bounded
in $L^2((0,T)\times\mathbb R^2)$ for fixed $k$.
The cutoff term is supported away from $\operatorname{supp}\phi$,
but its nonlocal potential will be retained.

\paragraph{Cap compactness and the source limit.}
We first obtain the compactness needed to pass the localized cap
equation to the limit. For each finite $1<r<\infty$, boundedness and fixed spatial support
of $b_n$ give relative compactness of its spatial values in
$W^{-1,r}(\mathbb R^2)$, by duality and the 2D Rellich theorem.
Moreover, $b_nu_n$ is bounded in $L^2_tL^r_x$, and $g_n$ is bounded in
$L^2_tW^{-1,r}_x$. Thus \eqref{flux:localized-cap} supplies the time bound
\[
 \|\partial_tb_n\|_{L^2(0,T;W^{-1,r}(\mathbb R^2))}
 \le C_{k,\chi,r}.
\]
Arzel\`a--Ascoli and a countable diagonal extraction, with the limit
identified by $q_n\weakstar t_k$, give
\begin{equation}\label{flux:cap-compactness}
 b_n\longrightarrow b:=\chi t_k
 \quad\text{in }C([0,T];W^{-1,r}(\mathbb R^2)),
 \qquad 1<r<\infty.
\end{equation}
In particular, $b_n\to b$ strongly in $L^2_tH^{-1}_x$.

This compactness also determines the transport part of the source.
Pairing this convergence against smooth multiples of
$u_n\rightharpoonup u$ in $L^2_tH^1_x$ gives $b_nu_n\to bu$
in distributions. Repeating the argument with a cutoff equal to
one near $\operatorname{supp}\nabla\chi$ identifies
$q_nu_n\cdot\nabla\chi\to t_ku\cdot\nabla\chi$ in distributions.
The uniform $L^2$ source bound and
$a_k(\rho_n)d_n\rightharpoonup S_k$ therefore imply
\begin{equation}\label{flux:limiting-cap}
 \begin{aligned}
 g_n&\rightharpoonup g:=-\chi S_k+t_ku\cdot\nabla\chi
 \quad\quad\text{ in }L^2((0,T)\times\mathbb R^2),\\
& \partial_tb+\operatorname{div}(bu)=g
\;\;\;\;\quad\quad \quad\quad\text{ in }\mathcal D'((0,T)\times\mathbb R^2).
 \end{aligned}
\end{equation}

\paragraph{The inverse-divergence potential.}
We next prepare the potential that will enter the momentum test.
On $\mathbb R^2$, let $G(x)=(2\pi)^{-1}\log|x|$ and define
\begin{equation}\label{flux:operators}
 A_iv=\partial_iG*v,\qquad R_{ij}v=\partial_jA_iv,
 \qquad \operatorname{div}Av=v.
\end{equation}
The operator $R_{ij}$ has Fourier multiplier $\xi_i\xi_j/|\xi|^2$,
is self-adjoint on $L^2$, and is bounded on $L^r$ for
$1<r<\infty$. For $v$ supported in a fixed ball $B_0$, local
integrability of $\nabla G$ and these bounds give, for every ball $B$,
\[
 \|Av\|_{W^{1,r}(B)}
 \le C_{B,B_0,r}\|v\|_{L^r(\mathbb R^2)}.
\]
Thus the potential is defined locally without a mean-zero condition;
no global $L^2$ bound for $Av$ is required.

Put $A_n=Ab_n$ and $A=Ab$. The local potential estimate and
\eqref{flux:localized-cap} give
\begin{equation}\label{flux:potential-bounds}
 \begin{aligned}
 \|A_n\|_{L^\infty(0,T;W^{1,r}(B))}&\le C_{k,\chi,B,r},\\
 \partial_tA_{n,i}&=-R_{ij}(b_nu_{n,j})+A_ig_n,
 \end{aligned}
\end{equation}
where $1<r<\infty$ and repeated indices are summed.
The source potential $Ag_n$ is bounded in $L^2_tH^1(B)$, hence
in $L^2_tL^r(B)$ for every finite $r>1$ by the 2D Sobolev embedding.
The same $L^2_tL^r(B)$ bound holds for $R_{ij}(b_nu_{n,j})$.
Consequently, $\partial_tA_n$ is bounded in $L^2_tL^r(B)$.

Choose $r>2$. The compact embedding
$W^{1,r}(B)\Subset C(\overline B)$ and Ehrling's inequality upgrade
the uniform $1/2$-H\"older time modulus in $L^r(B)$ to equicontinuity
in $C(\overline B)$. Arzel\`a--Ascoli then gives compactness in
$C([0,T];C(\overline B))$. The distributional convergence
$Ab_n\to Ab$ identifies the limit, so, after a common diagonal
extraction over balls,
\begin{equation}\label{flux:potential-compactness}
 A_n\longrightarrow A
 \quad\text{in }C([0,T];C(\overline B)).
\end{equation}
Together, \eqref{flux:cap-compactness} and
\eqref{flux:potential-compactness} provide the cap and potential
compactness for the momentum test. The Riesz-transform estimates
above use finite exponents and require no $L^\infty$ bound.

\subsection{The localized momentum identity}

\begin{lemma}[Localized momentum identity]\label{flux:testing-lemma}
Fix $k$, $\phi$, and $\chi$ as in Subsection~4.1, and retain its notation.
Set $\mathbb S_n=\mathbb S(u_n)$ and
$\mathcal C_{n,i}=[u_{n,j},R_{ij}]b_n$.
The test $\Psi_{n,i}=\phi A_i[\chi T_k(\rho_n)]$ is admissible
by regularization. After removing the regularization, it yields
\begin{align*}
 \int_Q\phi(p_n-\nu d_n)b_n
 ={}&-\int_Q\phi m_n\cdot\mathcal C_n
      -\int_Q\phi m_n\cdot Ag_n
      -\int_Q(\partial_t\phi)m_n\cdot A_n\\
    &-\int_Q(\partial_j\phi)F_{n,ij}A_{n,i}
      -\int_Qp_n\nabla\phi\cdot A_n
      +\int_Q\mathbb S_{n,ij}(\partial_j\phi)A_{n,i}\\
    &+2\mu\int_Q u_{n,i}
       \bigl((\partial_i\phi)b_n-(\partial_j\phi)R_{ij}b_n\bigr).
\end{align*}
This is the localized momentum identity \eqref{flux:full} below.
\end{lemma}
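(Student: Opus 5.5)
The plan is to test the momentum equation of $(\rho_n,u_n)$ with $\Psi_n=\phi A[b_n]$, where $b_n=\chi T_k(\rho_n)$, and then rewrite each resulting term. $\Psi_n$ is only as regular as $A_n$. By \eqref{flux:potential-bounds} it lies in $L^\infty_tW^{1,r}$ with $\partial_t\Psi_n\in L^2_tL^r$, which is not smooth. We therefore first mollify $b_n$ in space (and in time if needed) by $b_n^\varepsilon=b_n*\omega_\varepsilon$. The test $\phi Ab_n^\varepsilon$ is then smooth and compactly supported in $Q$ (since $\phi$ is), with zero boundary trace. The equation for $b_n^\varepsilon$ comes from \eqref{flux:localized-cap}:
\[
\partial_tb_n^\varepsilon+\operatorname{div}(b_nu_n)^\varepsilon=g_n^\varepsilon .
\]
Consequently $\partial_tA b_n^\varepsilon=-R(b_nu_n)^\varepsilon+Ag_n^\varepsilon$.

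Inserting this test into the weak momentum equation with pressure $p_n$ gives the following terms, one by one.
\begin{enumerate}
\item The time-derivative term splits as $-\int(\partial_t\phi)m_n\cdot A_n$ plus $\int\phi\, m_n\cdot R(b_nu_n)-\int\phi\, m_n\cdot Ag_n$, up to sign conventions.
\item The convective term $\int F_{n,ij}\partial_j(\phi A_i b_n)$ produces $\int(\partial_j\phi)F_{n,ij}A_{n,i}$ and $\int\phi\,\rho_nu_{n,i}u_{n,j}R_{ij}b_n$. Combining the latter with $\int\phi\,m_{n,i}R_{ij}(b_nu_{n,j})$ yields exactly the commutator term $\int\phi\,m_n\cdot[u_{n,j},R_{ij}]b_n$.
\item The pressure term gives $\int p_n\phi\operatorname{div}A b_n=\int\phi p_nb_n$ plus $\int p_n\nabla\phi\cdot A_n$.
\item The viscous term $\int\mathbb S_n:\nabla(\phi A b_n)$ splits into a part with $\nabla\phi$, namely $\int\mathbb S_{n,ij}(\partial_j\phi)A_{n,i}$, and $\int\phi\,\mathbb S_{n,ij}R_{ij}b_n$. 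For the latter, the $\lambda$ part gives $\lambda\int\phi\, d_nb_n$ because $R_{ii}=\mathrm{Id}$. The $\mu$ part uses self-adjointness of $R_{ij}$ and the identity $R_{ij}\partial_ju_i=\partial_i\operatorname{div}\Delta^{-1}\ldots$, which converts $\mu\int\phi(\partial_iu_j+\partial_ju_i)R_{ij}b_n$ into $2\mu\int\phi\, d_nb_n$. The cost of moving $\phi$ through $R_{ij}$ and integrating by parts is the last term $2\mu\int u_{n,i}\bigl((\partial_i\phi)b_n-(\partial_j\phi)R_{ij}b_n\bigr)$.
\end{enumerate}
Collecting everything and moving $\nu\int\phi d_nb_n$ to the left gives the displayed identity.

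To remove the regularization at fixed $n$ we use $\rho_n\in L^{\beta+1}$, $\beta\ge6$, so that $p_n\in L^{(\beta+1)/\beta}$ and $m_n$, $F_n$ have high integrability. Together with $u_n\in L^2H^1$ and $b_n\in L^\infty$, every term is an absolutely convergent integral. Each term then converges as $\varepsilon\to0$ by strong $L^r$ convergence of the mollified caps, potentials and commutators, using the $L^r$ boundedness of $R_{ij}$ and the local potential bound.

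The main obstacle is the algebra of the viscous term combined with exact bookkeeping of the localization errors. Since $A b_n$ is a nonlocal whole-space potential of a compactly supported function, it is not compactly supported. Only $\phi$ localizes, so every integration by parts must be done in $\mathbb R^2$ with $u_n$ extended by zero, which is legitimate since $u_n\in H^1_0$. The step I would check first is the commutation $\int\phi\,\partial_ju_i\,R_{ij}b=\int\phi\,d\,b+\int u_i\bigl((\partial_i\phi)b-(\partial_j\phi)R_{ij}b\bigr)$. I would verify it for smooth $u$ and $b$ via Fourier symbols and integration by parts, writing $R_{ij}b=\partial_jA_ib$ and $\partial_jA_i=\partial_iA_j$, and then extend it by density.
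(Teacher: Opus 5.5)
Your proposal is correct and follows essentially the paper's route. You mollify $b_n$, use the mollified cap equation to get $\partial_tA^\varepsilon_{n,i}=-R_{ij}[(b_nu_{n,j})^\varepsilon]+A_i[g_n^\varepsilon]$, group the time and convective terms into $\int\phi m_n\cdot\mathcal C_n+\int\phi m_n\cdot Ag_n$, split the pressure via $\operatorname{div}A=\mathrm{Id}$, and reduce the stress through the symmetry of $R_{ij}$ and $\partial_jR_{ij}b=\partial_ib$; your commutation identity, multiplied by $2\mu$ and added to the $\lambda$ term, is exactly \eqref{flux:stress}. The only difference is minor: you remove the regularization with fixed-$n$ integrability ($\rho_n\in L^{\beta+1}$), while the paper uses the uniform mixed exponents of Lemma~\ref{mom:exponents} and dominated convergence for the pressure, and either suffices at fixed $n$.
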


\begin{proof}
\noindent\textbf{Admissibility of the momentum test.}
We first justify the approximate and limiting tests by mollification;
the inertial and stress calculations will then give the full identity.
For a smooth compactly supported vector test $\Psi$, the approximate
momentum equation corresponding to \eqref{app:fixedsystem}, with
$(r,v,\delta)=(\rho_n,u_n,\delta_n)$, reads
\begin{equation}\label{flux:approximate-momentum}
 0=\int_Q \left[ m_{n,i}\partial_t\Psi_i
    +F_{n,ij}\partial_j\Psi_i+p_n\partial_i\Psi_i
    -\mathbb S_{n,ij}\partial_j\Psi_i \right].
\end{equation}

Let $\Psi_{n,i}=\phi A_i[\chi T_k(\rho_n)]=\phi A_{n,i}$ and
$A_n^\varepsilon=A[b_n^\varepsilon]$, where the mollification is
specified below. Testing \eqref{flux:approximate-momentum} with
$\Psi_n^\varepsilon=\phi A_n^\varepsilon$ gives
\begin{equation}\label{flux:regularized-momentum-test}
 \begin{aligned}
 0=\int_Q \Big[&m_{n,i}\partial_t(\phi A_{n,i}^\varepsilon)
   +F_{n,ij}\partial_j(\phi A_{n,i}^\varepsilon)
   +p_n\partial_i(\phi A_{n,i}^\varepsilon)-\mathbb S_{n,ij}\partial_j(\phi A_{n,i}^\varepsilon)\Big].
 \end{aligned}
\end{equation}
Here, for fixed $n$ and $k$, we let $v^\varepsilon$ denote
standard mollification of $v$ in space and time with nonnegative
unit-mass kernels. The localized quantities $b_n$, $b_nu_n$, and $g_n$
are extended by zero in space before convolution; $\varepsilon$ is small
enough that time convolution near $\operatorname{supp}\phi$ stays inside
$(0,T)$. Mollifying \eqref{flux:localized-cap} and using
\eqref{flux:operators} gives
\begin{equation}\label{flux:regularized-cap}
 \begin{aligned}
 \partial_tb_n^\varepsilon
 +\operatorname{div}((b_nu_n)^\varepsilon)&=g_n^\varepsilon,\\
 \partial_tA_{n,i}^\varepsilon
 &=-R_{ij}[(b_nu_{n,j})^\varepsilon]+A_i[g_n^\varepsilon].
 \end{aligned}
\end{equation}
Thus $\Psi_n^\varepsilon$ is a smooth, compactly supported momentum
test whose time derivative is determined by
\eqref{flux:regularized-cap}.

To remove the smoothing at fixed $n$ and $k$, we use the following
pairings for the time, convective, and viscous terms, respectively.
On an interior cylinder $U=I\times D\Subset Q$ containing
$\operatorname{supp}\phi$, they follow from \eqref{mom:mixed} and
\eqref{flux:potential-bounds}:
\begin{align*}
 &m_n\in L^3_tL^{96/95}_x,
       &&\partial_tA_n\in L^2_tL^{96}_x,\\
 &F_n\in L^{6/5}_tL^{192/191}_x,
       &&\nabla A_n\in L^6_tL^{192}_x,\\
 &\mathbb S(u_n)\in L^2,
       &&\nabla\Psi_n\in L^2.
\end{align*}
All spaces in this display are restricted to $U$.
For fixed $n$ and $k$, as $\varepsilon\to0$, we have
\[
 \partial_tA_n^\varepsilon\longrightarrow\partial_tA_n
    \quad\text{in }L^2(I;L^{96}(D)),\qquad
 \nabla A_n^\varepsilon\longrightarrow\nabla A_n
    \quad\text{in }L^6(I;L^{192}(D)),
\]
and
\[
 \nabla\Psi_n^\varepsilon\longrightarrow\nabla\Psi_n
    \quad\text{in }L^2(U).
\]
H\"older's inequality and the convergences above pass the principal
time and convective contributions
\[
 \int_Q \phi m_{n,i}\partial_tA_{n,i}^\varepsilon,
 \qquad
 \int_Q \phi F_{n,ij}\partial_jA_{n,i}^\varepsilon,
\]
as $\varepsilon\to0$, while
$\nabla\Psi_n^\varepsilon\to\nabla\Psi_n$ in $L^2(U)$ passes the full
viscous term
\[
 -\int_Q\mathbb S_{n,ij}\partial_j(\phi A_{n,i}^\varepsilon).
\]
The cutoff contributions from the time and convective terms,
\[
 \int_Q(\partial_t\phi)m_n\cdot A_n^\varepsilon,
 \qquad
 \int_Q(\partial_j\phi)F_{n,ij}A_{n,i}^\varepsilon,
\]
converge by the uniform convergence
$A_n^\varepsilon\to A_n$ on $\overline U$.

For the pressure term, \eqref{flux:operators} gives the decomposition
\[
 \int_Q p_n\operatorname{div}\Psi_n^\varepsilon
 =\int_Q p_n\phi b_n^\varepsilon
  +\int_Q p_n\nabla\phi\cdot A_n^\varepsilon.
\]
At fixed $n$, $p_n\in L^1(Q)$, while
$0\le b_n^\varepsilon\le2k$ and $b_n^\varepsilon\to b_n$
almost everywhere. Dominated convergence therefore passes the
principal pressure contribution
$\int_Q p_n\phi b_n^\varepsilon$. The cutoff pressure contribution
$\int_Q p_n\nabla\phi\cdot A_n^\varepsilon$ converges because
$A_n^\varepsilon\to A_n$ uniformly on $\overline U$.
This justifies removing the smoothing in the approximate momentum
identity at fixed $n$ and $k$.

The limiting test is justified by the same procedure, independently
of the sequence limit. For $\Psi=\phi Ab$, we use the interior momentum
identity \eqref{weak:mom}, established in
Section~\ref{sec:momentum}, and mollify the limiting truncation
equation \eqref{flux:limiting-cap}.
The identities \eqref{flux:regularized-cap} then hold with
$(b_n,b_nu_n,g_n)$ replaced by $(b,bu,g)$. Writing
$A^\varepsilon=A[b^\varepsilon]$ and
$\Psi^\varepsilon=\phi A^\varepsilon$, the same finite-exponent
estimates pass the principal time and convective contributions
\[
 \int_Q\phi m_i\partial_tA_i^\varepsilon,
 \qquad
 \int_Q\phi F_{ij}\partial_jA_i^\varepsilon,
\]
and the $L^2$ convergence of $\nabla\Psi^\varepsilon$ passes the full
viscous term
\[
 -\int_Q\mathbb S_{ij}(u)\partial_j\Psi_i^\varepsilon.
\]
The corresponding time and convective cutoff contributions, containing
$(\partial_t\phi)A^\varepsilon$ and
$(\nabla\phi)A^\varepsilon$, pass by uniform local convergence of
$A^\varepsilon$ to $Ab$.

For the principal pressure contribution, since $\rho\in L^1(U)$,
$0\le b^\varepsilon\le2k$, and $b^\varepsilon\to b$ almost everywhere,
dominated convergence gives
\[
 \int_Q\rho\phi b^\varepsilon\longrightarrow\int_Q\rho\phi b.
\]
The cutoff pressure contribution
$\int_Q\rho\nabla\phi\cdot A^\varepsilon$ converges by the same
uniform local convergence and the local $L^1$ integrability of
$\rho$. This completes the justification of both momentum tests;
we now compute the approximate identity before passing $n\to\infty$.

\noindent\textbf{The inertial terms.}
Let $\mathbb S_n=\mathbb S(u_n)$ and
\[
 \mathcal C_{n,i}=[u_{n,j},R_{ij}]b_n
       :=u_{n,j}R_{ij}b_n-R_{ij}(u_{n,j}b_n).
\]
Using \eqref{flux:potential-bounds}, the principal time and convective
contributions
$\int_Q\phi m_{n,i}\partial_tA_{n,i}$ and
$\int_Q\phi F_{n,ij}\partial_jA_{n,i}$ combine exactly into
\begin{equation}\label{flux:inertia}
 \int_Q\phi m_{n,i}
       \{-R_{ij}(b_nu_{n,j})+A_ig_n+u_{n,j}R_{ij}b_n\}
 =\int_Q\phi m_n\cdot\mathcal C_n+
       \int_Q\phi m_n\cdot Ag_n.
\end{equation}
This grouping is performed before taking $n\to\infty$.

\noindent\textbf{The stress contribution.}
For the stress, symmetry of $R_{ij}$ and integration by parts give
\begin{equation}\label{flux:stress}
 \int_Q\phi\mathbb S_{n,ij}R_{ij}b_n
 =\nu\int_Q\phi d_nb_n
   +2\mu\int_Q u_{n,i}
       \bigl((\partial_i\phi)b_n-(\partial_j\phi)R_{ij}b_n\bigr).
\end{equation}
Indeed, each symmetric-gradient term equals
$\int_Q\phi d_nb_n+\int_Q u_{n,i}(\partial_i\phi)b_n
-\int_Q u_{n,i}(\partial_j\phi)R_{ij}b_n$;
the bulk-viscosity term is $\lambda\int_Q\phi d_nb_n$.

\noindent\textbf{The full identity.}
Substituting the grouped inertia and stress formula into the momentum
identity yields the formula to be passed to the limit:
\begin{align}
 \int_Q\phi(p_n-\nu d_n)b_n
 ={}&-\int_Q\phi m_n\cdot\mathcal C_n
      -\int_Q\phi m_n\cdot Ag_n
      -\int_Q(\partial_t\phi)m_n\cdot A_n \notag\\
    &-\int_Q(\partial_j\phi)F_{n,ij}A_{n,i}
      -\int_Qp_n\nabla\phi\cdot A_n
      +\int_Q\mathbb S_{n,ij}(\partial_j\phi)A_{n,i}\notag\\
    &+2\mu\int_Q u_{n,i}
       \bigl((\partial_i\phi)b_n-(\partial_j\phi)R_{ij}b_n\bigr).
       \label{flux:full}
\end{align}
The derivatives of $\chi$ enter through the source $g_n$ in
\eqref{flux:localized-cap}, whose potential is retained in the first
line. Since $\chi=1$ on $\operatorname{supp}\phi$, the left side contains
the original, unlocalized cap there. This proves the stated identity.
\end{proof}

\subsection{Passage of the nonlocal terms}

\begin{lemma}[Passage of the nonlocal terms]\label{flux:nonlocal-lemma}
Under the notation of Subsections~4.1--4.2, with $k$ fixed, as $n\to\infty$,
\[
 \int_Q\phi m_n\cdot\mathcal C_n
 \longrightarrow\int_Q\phi m\cdot\mathcal C,
 \qquad
 \int_Q\phi m_n\cdot Ag_n
 \longrightarrow\int_Q\phi m\cdot Ag,
\]
where $\mathcal C_{n,i}=[u_{n,j},R_{ij}]b_n$,
$\mathcal C_i=[u_j,R_{ij}]b$, and
\[
 g=-\chi S_k+t_ku\cdot\nabla\chi.
\]
The source $S_k$ remains the weak limit in \eqref{flux:weak-caps};
it is not identified with $a_k(\rho)\operatorname{div}u$.
\end{lemma}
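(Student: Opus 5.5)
The plan is a strong-times-weak argument in each term: the momentum converges strongly in a negative Sobolev space, and the other factor converges weakly in the dual space, with uniform bounds. The key input is Proposition~\ref{mom:compact}: with a cutoff $\tilde\chi$ equal to one near $\operatorname{supp}\phi$, $\tilde\chi m_n\to\tilde\chi m$ strongly in $L^2(0,T;W^{-1,192/95}(B))$. So it suffices to show that $\phi\,\mathcal C_n\rightharpoonup\phi\,\mathcal C$ and $\phi\,Ag_n\rightharpoonup\phi\,Ag$ weakly in $L^2(0,T;W^{1,192/97}_0(B))$. We also need these sequences bounded there, so that the pairing $\langle\tilde\chi m_n,\phi\mathcal C_n\rangle$ splits as $\langle\tilde\chi(m_n-m),\phi\mathcal C_n\rangle+\langle\tilde\chi m,\phi\mathcal C_n\rangle$. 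The first piece tends to zero by strong-times-bounded. The second tends to $\langle\tilde\chi m,\phi\mathcal C\rangle$ by weak convergence. Finally, we check that these duality pairings coincide with the integrals. For that we use $m_n\in L^3_tL^{96/95}_x$ from Lemma~\ref{mom:exponents} together with the finite-exponent integrability of $\mathcal C_n$ and $Ag_n$, exactly as in the proof of Proposition~\ref{mom:identification}.

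\emph{Source potential.} By \eqref{flux:limiting-cap}, $g_n\rightharpoonup g$ in $L^2((0,T)\times\mathbb R^2)$, with supports in a fixed ball. The local potential estimate after \eqref{flux:operators} makes $v\mapsto Av|_B$ bounded from $L^2$ into $W^{1,2}(B)$, and it is linear. Hence $Ag_n\rightharpoonup Ag$ weakly in $L^2_tW^{1,2}(B)$, and therefore $\phi Ag_n\rightharpoonup\phi Ag$ in $L^2_tW^{1,2}_0(B)\hookrightarrow L^2_tW^{1,192/97}_0(B)$, since $B$ is bounded and $2>192/97$. Note that $S_k$ enters only through the weak limit $g$, so no identification of the source is needed.

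\emph{Commutator.} This is the main obstacle. Fix $k$. Then $b_n$ is bounded in $L^\infty$ with fixed compact support, and $u_n$ is bounded in $L^2_tW^{1,2}$. The Coifman--Rochberg--Weiss type commutator estimate
\[
 \|\nabla[u_j,R_{ij}]b\|_{L^s}\le C\|\nabla u\|_{L^2}\|b\|_{L^{p}},\qquad \tfrac1s=\tfrac12+\tfrac1p ,
\]
with $p$ large, together with the $L^r$ bound on $[u_j,R_{ij}]b$ from the boundedness of $R_{ij}$ and Sobolev embedding of $u$, shows that $\phi\mathcal C_n$ is bounded in $L^2_tW^{1,s}_0(B)$ for any $s<2$, in particular $s=192/97$. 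The bound uses $u_n$ extended by zero, which remains in $W^{1,2}(\mathbb R^2)$ because of the no-slip condition, and a local treatment of $R_{ij}$ since $b_n$ has compact support. It remains to identify the weak limit as $\mathcal C$.

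To identify the limit, write $\mathcal C_n=u_{n,j}R_{ij}b_n-R_{ij}(u_{n,j}b_n)$. By \eqref{flux:cap-compactness}, $b_n\to b$ strongly in $L^2_tH^{-1}$, and $u_n\rightharpoonup u$ weakly in $L^2_tH^1$. Hence $b_nu_n\to bu$ in distributions, and consequently $R_{ij}(b_nu_{n,j})\to R_{ij}(bu_j)$ in distributions, using the uniform $L^2_tL^r$ bounds. For the product $u_{n,j}R_{ij}b_n$, note that $R_{ij}b_n=\partial_jA_{n,i}$. Pairing with a smooth test $\theta$ and integrating by parts gives
\[
 \int\theta u_{n,j}\partial_jA_{n,i}=-\int A_{n,i}\bigl(\theta d_n+u_{n,j}\partial_j\theta\bigr).
\]
Here $A_n\to A$ uniformly on $[0,T]\times\overline B$ by \eqref{flux:potential-compactness}, while $d_n$ and $u_n$ converge weakly in $L^2$. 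The right side therefore converges, and the limit equals the integral of $\theta u_jR_{ij}b$. Thus $\mathcal C_n\to\mathcal C$ in distributions, and the uniform bound upgrades this to weak convergence in $L^2_tW^{1,192/97}(B)$. With both weak convergences in hand, the strong-times-weak pairing described at the start completes the proof.

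If the commutator gradient bound were delicate near $s=2$, the fallback is to use only an $L^2_tW^{1,s}$ bound for some $s$ below $2$. Since $W^{1,192/97}$ was chosen strictly subcritical, any $s\ge192/97$ suffices.
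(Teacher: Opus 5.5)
Your proposal is correct and follows essentially the paper's route. Both arguments pair the strong $L^2_tW^{-1,192/95}$ convergence of the localized momentum with weak $L^2_tW^{1,192/97}_0(B)$ convergence of $\phi\mathcal C_n$ and $\phi Ag_n$. For the commutator, the bound comes from the Coifman--Meyer estimate with exponents $2$ and $192$; for the source potential, it comes from the local potential estimate.

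The only variation is how you identify the distributional limit of $\mathcal C_n$. The paper moves $R_{ij}$ onto the test function by self-adjointness and pairs with $b_n\to b$ in $L^2_tH^{-1}$. You instead integrate $u_{n,j}\partial_jA_{n,i}$ by parts and use the uniform convergence of $A_n$ from the potential compactness, which works equally well.
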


\begin{proof}
Both passages use the same negative-Sobolev duality: strong compactness
of the localized momentum is paired with weak convergence of the
commutator or source potential.

\noindent\textbf{The commutator term.}
We use the following first-derivative commutator estimate of
Coifman--Meyer type \cite{CM,Lions}:
\begin{equation}\label{flux:commutator-estimate}
 \|\nabla[v,R_{ij}]w\|_{L^r(\mathbb R^2)}
 \le C_{p,q,r}\|\nabla v\|_{L^p(\mathbb R^2)}
                   \|w\|_{L^q(\mathbb R^2)},
 \quad \frac1r=\frac1p+\frac1q,
 \quad 1<p,q,r<\infty.
\end{equation}
Taking $p=2$, $q=192$, and hence $r=192/97$ in
\eqref{flux:commutator-estimate}, the undifferentiated norm is bounded
with the same exponents:
\[
 \|u_{n,j}R_{ij}b_n\|_{L^{\frac{192}{97}}}
  +\|R_{ij}(u_{n,j}b_n)\|_{L^{\frac{192}{97}}}
 \le C\|u_n\|_{L^2}\|b_n\|_{L^{192}}.
\]
Poincar\'e's inequality and \eqref{flux:commutator-estimate} therefore give
\begin{equation}\label{flux:commutator-bound}
 \|\mathcal C_n\|_{L^2_tW^{1,\frac{192}{97}}(\mathbb R^2)}
       \le C_{k,\chi,\Omega}\|\nabla u_n\|_{L^2(Q)}.
\end{equation}

To identify the weak limit of the commutator, we use the cap
compactness, without strong $L^1$ density convergence. For any scalar
$\eta\in C_c^\infty((0,T)\times\mathbb R^2)$,
self-adjointness gives, with the time integrations understood,
\[
 \int\eta u_{n,j}R_{ij}b_n
       =\langle b_n,R_{ij}(\eta u_{n,j})\rangle,
 \qquad
 \int\eta R_{ij}(b_nu_{n,j})
       =\langle b_n,u_{n,j}R_{ij}\eta\rangle.
\]

Multiplication by the fixed smooth function \(\eta\) is bounded on
\(H^1(\mathbb R^2)\), and \(R_{ij}\) is bounded on the same space.
The zero extensions satisfy \(u_n\rightharpoonup u\) in
\(L^2(0,T;H^1(\mathbb R^2))\), and hence
\[
 R_{ij}(\eta u_{n,j})
 \rightharpoonup
 R_{ij}(\eta u_j)
 \qquad\text{in }L^2(0,T;H^1(\mathbb R^2)).
\]
Moreover, \(R_{ij}\eta\) and its first spatial derivatives are bounded
on \(\mathbb R^2\), uniformly in time, and hence
\[
 u_{n,j}R_{ij}\eta
 \rightharpoonup
 u_jR_{ij}\eta
 \qquad\text{in }L^2(0,T;H^1(\mathbb R^2)).
\]
On the other hand, \eqref{flux:cap-compactness} gives
\[
 b_n\longrightarrow b
 \qquad\text{strongly in }L^2(0,T;H^{-1}(\mathbb R^2)).
\]
The \(H^{-1}(\mathbb R^2)\)--\(H^1(\mathbb R^2)\) duality pairings above therefore pass
to the limit, giving
\[
 \mathcal C_n
 \longrightarrow
 [u_j,R_{ij}]b
 \qquad\text{in }\mathcal D'((0,T)\times\mathbb R^2).
\]

Combining this identification with \eqref{flux:commutator-bound} gives
\begin{equation}\label{flux:commutator-limit}
 \mathcal C_n\rightharpoonup
   \mathcal C:=[u_j,R_{ij}]b
       \quad\text{in }L^2_tW^{1,192/97}_x.
\end{equation}

We now pair this weak limit with the strongly convergent momentum.
Let $\zeta\in C_c^\infty(\Omega)$ equal one near the spatial support of
$\phi$, and choose a ball $B$ such that $\operatorname{supp}\zeta\Subset B$.
By Proposition~\ref{mom:compact}, in particular \eqref{mom:exact},
\[
 \zeta m_n\to\zeta m
       \quad\text{strongly in }L^2_tW^{-1,192/95}_x.
\]

Since \(\phi\) is compactly supported in \(B\), multiplication by
\(\phi\) maps \(W^{1,192/97}(B)\) continuously into
\(W^{1,192/97}_0(B)\). Hence \eqref{flux:commutator-limit} gives
\[
 \phi\mathcal C_n\rightharpoonup\phi\mathcal C
 \qquad\text{in }L^2(0,T;W^{1,192/97}_0(B)).
\]
The strong--weak pairing can therefore be written as
\begin{align*}
 \int_Q\phi m_n\cdot\mathcal C_n-
      \int_Q\phi m\cdot\mathcal C
 ={}&\langle\zeta(m_n-m),\phi\mathcal C_n\rangle\\
    &+\langle\zeta m,\phi(\mathcal C_n-\mathcal C)\rangle
       \longrightarrow0.
\end{align*}
This passes the principal commutator term of \eqref{flux:full}, with
dual exponents \(192/95\) and \(192/97\).

\noindent\textbf{The source-potential term.}
The same pairing applies to the source potential. By
\eqref{flux:limiting-cap} and the local potential estimate,
\[
 Ag_n\rightharpoonup Ag
 \qquad\text{in }L^2(0,T;H^1(B)),
\]
and therefore also
\[
 Ag_n\rightharpoonup Ag
 \qquad\text{in }L^2(0,T;W^{1,192/97}(B)).
\]
Since \(\phi\) is compactly supported in \(B\), multiplication by \(\phi\) yields
\[
 \phi Ag_n\rightharpoonup\phi Ag
 \qquad\text{in }L^2(0,T;W^{1,192/97}_0(B)).
\]
With the same cutoff \(\zeta\) and momentum convergence
\eqref{mom:exact}, we obtain
\[
\begin{aligned}
 \int_Q\phi m_n\cdot Ag_n-\int_Q\phi m\cdot Ag
 ={}&\langle\zeta(m_n-m),\phi Ag_n\rangle\\
    &+\langle\zeta m,\phi(Ag_n-Ag)\rangle
       \longrightarrow0.
\end{aligned}
\]
Hence
\begin{equation}\label{flux:source-passage}
 \int_Q\phi m_n\cdot Ag_n\longrightarrow
       \int_Q\phi m\cdot Ag .
\end{equation}
The source-potential term of \eqref{flux:full} thus passes by the same
duality as the commutator term. Throughout this passage,
\[
 g=-\chi S_k+t_ku\cdot\nabla\chi,
\]
and the source \(S_k\) has not been replaced by
\(a_k(\rho)\operatorname{div}u\).

\end{proof}

\subsection{Proof of Proposition 4.1}

We now pass to the limit in the remaining terms of the identity from
Lemma~\ref{flux:testing-lemma}, use Lemma~\ref{flux:nonlocal-lemma}
for the two nonlocal contributions, and compare the resulting formula
with the limiting momentum equation.

\begin{proof}[Completion of the proof of Proposition~\ref{flux:proposition}]

\noindent\textbf{Remaining cutoff and stress terms.}
The terms containing the
undifferentiated potential follow from its uniform convergence. By
\eqref{flux:potential-compactness}, $A_n\to A$ uniformly on every
compact subset of $Q$. Together with the already identified weak limits
of $m_n$, $F_n$, and $\mathbb S_n$, this gives
\[
 \int_Q(\partial_t\phi)m_n\cdot A_n
      \longrightarrow \int_Q(\partial_t\phi)m\cdot A,
\]
\[
 \int_Q(\partial_j\phi)F_{n,ij}A_{n,i}
      \longrightarrow \int_Q(\partial_j\phi)F_{ij}A_i,
 \qquad
 \int_Q\mathbb S_{n,ij}(\partial_j\phi)A_{n,i}
      \longrightarrow \int_Q\mathbb S_{ij}(u)(\partial_j\phi)A_i.
\]
The two cutoff terms from the stress formula, in the last line of
\eqref{flux:full}, use cap compactness. For the first one, \eqref{flux:cap-compactness} and
$u_n\rightharpoonup u$ in $L^2_tH^1_x$ give
\[
 \int_Q u_{n,i}(\partial_i\phi)b_n
 =\int_0^T\langle b_n,u_{n,i}\partial_i\phi\rangle\,dt
 \longrightarrow
 \int_0^T\langle b,u_i\partial_i\phi\rangle\,dt.
\]
For the Riesz-transform term, self-adjointness gives
\[
 \int_Q u_{n,i}(\partial_j\phi)R_{ij}b_n
 =\int_0^T\langle b_n,R_{ij}(u_{n,i}\partial_j\phi)\rangle\,dt.
\]
Since
\[
 R_{ij}(u_{n,i}\partial_j\phi)
 \rightharpoonup R_{ij}(u_i\partial_j\phi)
 \qquad\text{in }L^2_tH^1_x,
\]
the strong $L^2_tH^{-1}_x$ convergence in
\eqref{flux:cap-compactness} yields
\[
 \int_Q u_{n,i}(\partial_j\phi)R_{ij}b_n
 \longrightarrow
 \int_Q u_i(\partial_j\phi)R_{ij}b.
\]

\noindent\textbf{Pressure and artificial pressure.}
For the pressure terms, we use the local artificial-pressure
convergence from Lemma~\ref{mom:artificial}, in
particular \eqref{mom:AP},
\[
 \delta_n\rho_n^\beta\longrightarrow0
 \qquad\text{strongly in }L^1((0,T)\times D)
\]
for every $D\Subset\Omega$. Hence, since $|b_n|\le2k$,
\[
 \delta_n\rho_n^\beta b_n\longrightarrow0
 \qquad\text{in }L^1((0,T)\times D).
\]
For the pressure cutoff term in \eqref{flux:full}, write
\[
 \int_Q p_n\nabla\phi\cdot A_n
 =\int_Q \rho_n\nabla\phi\cdot A_n
  +\int_Q\delta_n\rho_n^\beta\nabla\phi\cdot A_n.
\]
The first term converges to
$\int_Q\rho\nabla\phi\cdot A$ by the weak $L^1$ convergence of
$\rho_n$ and the uniform convergence of $A_n$ on
$\operatorname{supp}\nabla\phi$; the second tends to zero by
\eqref{mom:AP} and the same local uniform bound. Thus
\[
 \int_Q p_n\nabla\phi\cdot A_n
 \longrightarrow \int_Q\rho\nabla\phi\cdot A.
\]
For the principal pressure--viscous term on the left-hand side of
\eqref{flux:full}, the relations in \eqref{flux:weak-caps}, the fact
that $b_n=q_n$ on $\operatorname{supp}\phi$, and the local artificial-
pressure convergence above give
\begin{equation}\label{flux:left-passage}
 \int_Q\phi(p_n-\nu d_n)b_n
 \longrightarrow
 \int_Q\phi(P_k-\nu h_k).
\end{equation}

\noindent\textbf{Comparison with the limiting momentum equation.}
The preceding passages and Lemma~\ref{flux:nonlocal-lemma}
give the limit of \eqref{flux:full}. We compare it with the identity
obtained by testing the limiting momentum equation with $\phi A$.

The mollification argument in Lemma~\ref{flux:testing-lemma}
justifies this test. We compute $\partial_t A$ from
\eqref{flux:limiting-cap} and repeat the expansions in
\eqref{flux:inertia}--\eqref{flux:stress}, with $p=\rho$ and
$\mathbb S=\mathbb S(u)$. The resulting right-hand side is
exactly the limit obtained above. In particular, both identities
contain the same source-potential term, with
$g=-\chi S_k+t_ku\cdot\nabla\chi$. This term therefore cancels
in the comparison, without any identification of $S_k$.

Since $b=\chi t_k$ and $\chi=1$ on $\operatorname{supp}\phi$,
the principal pressure--viscous term of the limiting identity is
\[
 \int_Q\phi\bigl(\rho-\nu\operatorname{div}u\bigr)b
 =
 \int_Q\phi\bigl(\rho-\nu\operatorname{div}u\bigr)t_k.
\]
Comparison with \eqref{flux:left-passage} thus gives
\[
 \int_Q\phi(P_k-\nu h_k)
 =
 \int_Q\phi\bigl(\rho-\nu\operatorname{div}u\bigr)t_k
 \qquad\text{for every }\phi\in C_c^\infty(Q).
\]
This is \eqref{flux:identity} in $\mathcal D'(Q)$.
Since $P_k,\rho t_k\in L^1(Q)$ and
$h_k,t_k\operatorname{div}u\in L^2(Q)\subset L^1(Q)$,
the identity holds almost everywhere in $Q$.
This proves Proposition~\ref{flux:proposition}.
\end{proof}

\begin{remark}\label{flux:order}
The argument is carried out with $k$ fixed. Neither strong density
convergence nor limiting renormalization is used.
Source removal, the limit $k\to\infty$, and the nonlinear initial
trace are treated below. The interior flux identity alone does not
determine the time-zero entropy trace.
\end{remark}

\section{Quadratic oscillation and endpoint cap-source control}\label{sec:variance}

We develop a new argument for removing the truncation source
at the endpoint $\gamma=1$. The key ingredient is a single
integrable variance majorant, independent of the truncation
parameter. We start from the classical
effective-flux estimate for concave truncations
\cite{Lions,Feireisl,FNP}. We express this estimate in centered
covariance--variance form and prove that the resulting density
variance is integrable. This variance provides a
truncation-independent bound for the second moment of the
truncation residual on regions of bounded cap mean.

\subsection{Classical quadratic oscillation in centered form}

We use probabilistic notation for the density Young measure. At almost every
$(t,x)\in Q_T$, let $X$ denote the coordinate variable on $[0,\infty)$
with law $\eta_{t,x}$. For every integrable function $f$, we write
\[
 \E f(X):=\int_{[0,\infty)} f(z)\,d\eta_{t,x}(z).
\]
Accordingly, for random variables $Y$ and $Z$ for which the expressions
below are well defined, we set
\[
 \Cov(Y,Z):=\E\bigl[(Y-\E Y)(Z-\E Z)\bigr],
 \qquad
 \Var(Y):=\E|Y-\E Y|^2\in[0,\infty].
\]
The following elementary lemma is purely measure-theoretic and will later
be applied pointwise to the density Young measure $\eta_{t,x}$.

\begin{lemma}[Centered cap oscillation]\label{var:abstract}
Let $X\ge0$ with $\E X=r<\infty$,  and  $T_k$ be the truncations of
Section~\ref{sec:flux}. Let
\[
 t_k=\E T_k(X),\qquad C_k=\Cov(X,T_k(X)),\qquad
 V_k=\Var(T_k(X)),\qquad V=\E|X-r|^2\in[0,\infty].
\]
Then
\begin{equation}\label{var:monotone}
 0\le V_k\le C_k,\qquad C_k\uparrow V\quad(k\to\infty).
\end{equation}
If $V<\infty$, then
\begin{equation}\label{var:jensen}
 0\le T_k(r)-t_k,\qquad |T_k(r)-t_k|^2\le V.
\end{equation}
\end{lemma}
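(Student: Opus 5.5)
The plan is to derive everything from one observation: $T_k$ is nondecreasing and $1$-Lipschitz, with $T_k(s)\le s$. All three claims then follow from elementary monotone-coupling identities, monotone convergence, and Jensen's inequality.

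\textbf{Step 1: $0\le V_k\le C_k$.} Since $X\ge0$ and $T_k$ is bounded, $C_k=\E[XT_k(X)]-r\,t_k$ is finite, with $\E[XT_k(X)]\le 2k r$. Let $X'$ be an independent copy of $X$. The symmetrization identities
\[
 C_k=\tfrac12\E\bigl[(X-X')(T_k(X)-T_k(X'))\bigr],\qquad
 V_k=\tfrac12\E\bigl[(T_k(X)-T_k(X'))^2\bigr]
\]
hold whenever $\E X<\infty$, since all products are integrable. Monotonicity and the $1$-Lipschitz property give, pointwise,
\[
 0\le (T_k(x)-T_k(x'))^2\le (x-x')(T_k(x)-T_k(x')).
\]
Taking expectations yields $0\le V_k\le C_k$.

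\textbf{Step 2: $C_k\uparrow V$.} Since $T(s)\le s$ and $T$ is concave, $T(s)/s$ is nonincreasing. Hence $T_k(s)=s\,\frac{T(s/k)}{s/k}$ is nondecreasing in $k$ for each $s$, and $T_k(s)\uparrow s$ because $T_k(s)=s$ once $k\ge s$. The integrand $(x-x')(T_k(x)-T_k(x'))$ is not obviously monotone in $k$, so I would not argue through it. Instead I use the uncentered form:
\begin{itemize}[leftmargin=*]
\item $\E[XT_k(X)]\uparrow\E X^2$ by monotone convergence, since $XT_k(X)\ge0$ increases pointwise;
\item $r\,t_k\uparrow r^2$ by monotone convergence, with $r^2$ finite.
\end{itemize}
Therefore $C_k\to\E X^2-r^2=V$ in $[0,\infty]$. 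To get monotonicity in $k$ as well, write
\[
 C_{k+1}-C_k=\Cov\bigl(X,\,T_{k+1}(X)-T_k(X)\bigr).
\]
Here $D_k(s):=T_{k+1}(s)-T_k(s)$ vanishes on $[0,k]$ and is nonnegative. If $D_k$ is nondecreasing, the same symmetrization gives $\Cov(X,D_k(X))\ge0$. The derivative is $D_k'=T'((s)/(k+1))-T'(s/k)$, which is $\ge0$ because $T'$ is nonincreasing and $s/(k+1)\le s/k$. So $D_k$ is nondecreasing and $C_k$ increases to $V$.

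\textbf{Step 3: the Jensen bounds.} Concavity of $T_k$ gives $t_k=\E T_k(X)\le T_k(\E X)=T_k(r)$. For the quadratic bound, assume $V<\infty$. Then
\[
 0\le T_k(r)-t_k=\E\bigl[T_k(r)-T_k(X)\bigr]\le\E|X-r|,
\]
using the $1$-Lipschitz property. Cauchy--Schwarz then gives $\E|X-r|\le V^{1/2}$. Squaring yields $|T_k(r)-t_k|^2\le V$.

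\textbf{Main obstacle.} The only delicate point is the monotonicity in Step 2. It relies on the concavity of $T$ making $T'$ nonincreasing, so that $D_k$ is monotone. Everything else is routine integrability bookkeeping, where the only inputs are $\E X<\infty$ and the bound $T_k\le2k$.
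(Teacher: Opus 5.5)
Your proof is correct. Step 1 coincides with the paper's argument. Step 3 differs only cosmetically: the paper applies Jensen to $z\mapsto z^2$ and then the Lipschitz bound, while you bound $\E|X-r|$ and then use Cauchy--Schwarz.

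The genuine difference is in Step 2. The paper stays with the symmetrized formula. It writes $T_k(x)-T_k(y)=\int_y^x T'(s/k)\,ds$ for $x>y$ and uses that $T'(s/k)$ increases in $k$ to get $(x-y)(T_k(x)-T_k(y))\uparrow|x-y|^2$ pointwise. A single monotone convergence then yields both monotonicity and $C_k\uparrow\frac12\E|X-Y|^2$. The paper must still identify $\frac12\E|X-Y|^2$ with $V$, and this needs a separate argument that both are infinite when $\E X^2=\infty$.

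You instead separate the two tasks. You get the limit from the uncentered form $C_k=\E[XT_k(X)]-r\,t_k$, where monotone convergence gives $\E X^2-r^2=V$ in $[0,\infty]$ with no case analysis. You get monotonicity from $\Cov(X,D_k(X))\ge0$ for the nondecreasing increment $D_k=T_{k+1}-T_k$.

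Your caution that the symmetrized integrand is ``not obviously monotone'' is unnecessary. The inequality $(x-x')(D_k(x)-D_k(x'))\ge0$ says exactly that $(x-x')(T_k(x)-T_k(x'))$ is nondecreasing in $k$. So both routes rest on the same input, namely concavity making $T'(s/k)$ nondecreasing in $k$. Your organization simply makes the infinite-variance case automatic.

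One small slip: $T(s)/s$ is nonincreasing because of concavity together with $T(0)=0$, not because of $T(s)\le s$. For example, $s-1$ is concave and lies below $s$, yet $(s-1)/s$ increases. Since $T(0)=0$ holds here, nothing is affected. Moreover, your derivative computation ($D_k'\ge0$ with $D_k=0$ on $[0,k]$) already gives $T_k(s)\le T_{k+1}(s)$ on its own.
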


\begin{proof}

Let \(Y\) be an independent copy of \(X\). Expanding the products and using independence and identical distribution, we obtain
\begin{equation}
\label{name of Ck}
\begin{aligned}
\frac12\E\bigl[(X-Y)(T_k(X)-T_k(Y))\bigr]
&=\E[XT_k(X)]-\E X\,\E T_k(X)\\
&=\Cov(X,T_k(X)),
\end{aligned}
\end{equation}
and similarly
\[
\frac12\E|T_k(X)-T_k(Y)|^2
=\E|T_k(X)|^2-\bigl|\E T_k(X)\bigr|^2
=\Var(T_k(X)).
\]
Thus
\begin{align}
 C_k&=\frac12\E[(X-Y)(T_k(X)-T_k(Y))],\label{var:copies}\\
 V_k&=\frac12\E|T_k(X)-T_k(Y)|^2.
\end{align}
The integrand on the right-hand side of \eqref{var:copies} is integrable because \(T_k\) is bounded and \(\E X<\infty\).
 We denote that 
\[
 a=X-Y,\qquad b=T_k(X)-T_k(Y).
\]
Since $T_k$ is increasing, $ab\ge0$, while the one-Lipschitz property gives
$|b|\le|a|$. Hence, pointwise,
\(
 0\le b^2\le ab,
\)
and taking expectations yields $0\le V_k\le C_k$.

For $x>y$,
\[
 T_k(x)-T_k(y)=\int_y^x T'(s/k)\,ds\uparrow x-y.
\]
Indeed, concavity of $T$ makes $T'$ nonincreasing, so for each fixed
$s\ge0$ the quantity $T'(s/k)$ increases with $k$ and eventually equals
$1$. By symmetry, for every $x,y\ge0$,
\[
 (x-y)(T_k(x)-T_k(y))\uparrow |x-y|^2.
\]
Thus monotone convergence in \eqref{var:copies} gives
\[
 C_k\uparrow\frac12\E|X-Y|^2.
\]
If $\E X^2<\infty$, independence gives
\[
 \frac12\E|X-Y|^2=\E X^2-(\E X)^2=V.
\]

If \(\E X^2=\infty\), then we have 
\(
\E|X-Y|^2=\infty.
\)
In fact, 
since \(X\) and \(Y\) are independent copies and \(\E X=\E Y=r<\infty\), one obtains that 
\[
\E(XY)=\E X\,\E Y=r^2<\infty.
\]
If instead \(\E|X-Y|^2<\infty\), then
we would obtain
\[
\E(X^2+Y^2)
=
\E|X-Y|^2+2\E(XY)
<\infty.
\]
But \(X\) and \(Y\) are identically distributed, so
\(
\E(X^2+Y^2)=2\E X^2=\infty.
\) Thus, this is
a contradiction to \(\E X^2=\infty\). Hence we have
\(
\E|X-Y|^2=\infty.
\)

We next show that \(V=\infty\). Indeed, if
\(
V=\E|X-r|^2<\infty,
\)
then
\[\E
X^2=\E( |(X-r)+r|^2)
\le \E ( 2|X-r|^2+2r^2)\leq 2V+2r^2<+\infty,
\]
again a contradiction. Thus \(V=\infty\). Consequently,
\[
\frac12\E|X-Y|^2=V=\infty,
\]
and \eqref{var:monotone} holds also in the extended sense.


Finally, concavity of \(T_k\) and Jensen's inequality give
\[
t_k=\E T_k(X)\le T_k(\E X)=T_k(r),
\]
and hence
\[
T_k(r)-t_k
=\E\bigl[T_k(r)-T_k(X)\bigr]\ge0.
\]
Applying Jensen's inequality to the convex function \(z\mapsto |z|^2\), and then using the one-Lipschitz property of \(T_k\), we obtain
\[
|T_k(r)-t_k|^2
\le \E|T_k(r)-T_k(X)|^2
\le \E|r-X|^2=V.
\]

\end{proof}

We now apply Lemma~\ref{var:abstract} pointwise to the density Young
measure. Uniform integrability of $\rho_n$ gives $\E X=\rho(t,x)$ and,
with the notation introduced above,
\begin{equation}\label{var:representation}
 t_k=\E T_k(X),\qquad
 P_k=\E[XT_k(X)],\qquad
 T_k(\rho_n)^2\weakstar\E T_k(X)^2
 \quad\text{in }L^\infty(Q_T).
\end{equation}
Here the identification of $P_k$ contains no concentration term, since
$0\le \rho_nT_k(\rho_n)\le 2k\rho_n$ and $(\rho_n)_n$ is uniformly
integrable.
\subsection{Integrable centered variance}
We now prove that the centered density variance is integrable.
The fixed-cap effective-flux identity, together with
Lemma~\ref{var:abstract}, yields a bound uniform in the truncation parameter $k$.

\begin{proposition}[Integrable oscillation variance]\label{var:bound}
Let $d_n=\operatorname{div}u_n$, $d=\operatorname{div}u$, and
$D_*:=\sup_n\int_{Q_T}|d_n|^2$. Then
\begin{equation}\label{var:global}
 \int_{Q_T}V\le\nu^2D_*,\qquad \nu=2\mu+\lambda.
\end{equation}
More precisely, for every fixed $k$,
\begin{equation}\label{var:estimate}
 \int_{Q_T}C_k
 \le\nu D_*^{1/2}\left(\int_{Q_T}V_k\right)^{1/2}.
\end{equation}
\end{proposition}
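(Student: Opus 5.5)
Starting from the fixed-cap flux identity of Proposition~\ref{flux:proposition}, I rewrite its right-hand side as a weak limit of a product of fluctuations, apply Cauchy--Schwarz, and then let $k\to\infty$ using Lemma~\ref{var:abstract}. By \eqref{var:representation}, $P_k-\rho t_k=\E[XT_k(X)]-\E X\,\E T_k(X)=C_k$, so \eqref{flux:identity} reads
\[
 C_k=\nu\bigl(h_k-t_k\operatorname{div}u\bigr)\quad\text{a.e. in }Q_T .
\]
Since $q_n\weakstar t_k$ and $d_n\rightharpoonup d$ in $L^2$, and $h_k$ is the weak limit of $q_nd_n$, we can write
\[
 h_k-t_kd=\mathop{\mathrm{w}\!\!-\!\!\lim}_{n\to\infty}(q_n-t_k)(d_n-d).
\]
Indeed, expanding gives $q_nd_n-q_nd-t_kd_n+t_kd$, and the middle terms converge weakly to $t_kd$ because $t_k,d$ are fixed functions ($t_k\in L^\infty$, $d\in L^2$).

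Integrate over $Q_T$ (testing the weak $L^1$ limit against $1$). Cauchy--Schwarz at fixed $n$ gives
\[
 \int_{Q_T}C_k=\nu\lim_{n}\int_{Q_T}(q_n-t_k)(d_n-d)
 \le\nu\limsup_n\|q_n-t_k\|_{L^2}\,\|d_n-d\|_{L^2}.
\]
For the first factor, $\int(q_n-t_k)^2=\int q_n^2-2\int q_nt_k+\int t_k^2\to\int(\E T_k(X)^2-t_k^2)=\int V_k$, by the weak-$*$ limit of $q_n^2$ in \eqref{var:representation}. For the second factor, weak lower semicontinuity gives $\|d_n-d\|_2^2=\|d_n\|_2^2-2\int d_nd+\|d\|_2^2$, whose limsup is $\le D_*-\|d\|_2^2\le D_*$. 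This proves \eqref{var:estimate}. One point needs care: the weak limit is in $L^1(Q_T)$ of a product that is bounded in $L^2$, so pairing with the constant $1$ on the bounded set $Q_T$ is legitimate.

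To obtain \eqref{var:global}, apply Lemma~\ref{var:abstract} pointwise: $V_k\le C_k$. Then \eqref{var:estimate} gives $\int C_k\le \nu D_*^{1/2}(\int C_k)^{1/2}$. Since $C_k\ge0$ is integrable (it lies in $L^1$ by the flux identity), this yields $\int_{Q_T}C_k\le\nu^2D_*$ uniformly in $k$. Finally $C_k\uparrow V$ pointwise a.e. by \eqref{var:monotone}, so monotone convergence gives $\int_{Q_T}V\le\nu^2D_*$; in particular $V<\infty$ a.e.

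The main obstacle is the bookkeeping of the centered weak limit: checking that the Young-measure representation of the weak-$*$ limit of $q_n^2$ and the identification $P_k=\E[XT_k(X)]$ (no concentration) align exactly with the limits in \eqref{flux:weak-caps}. The rest is elementary.
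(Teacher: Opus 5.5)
Your proposal is correct and follows essentially the same route as the paper: use the flux identity and the Young-measure representation to write $\int_{Q_T}C_k$ as $\nu$ times the limit of $\int_{Q_T}(T_k(\rho_n)-t_k)$ paired with the divergence, apply Cauchy--Schwarz together with $\lim_n\|T_k(\rho_n)-t_k\|_{L^2(Q_T)}^2=\int_{Q_T}V_k$, absorb using $V_k\le C_k$, and let $k\to\infty$ by monotone convergence. The only difference is that you also center $d_n$ at $d$, whereas the paper pairs $T_k(\rho_n)-t_k$ with $d_n$ directly. Your centering gives the slightly sharper factor $(D_*-\|d\|_{L^2(Q_T)}^2)^{1/2}$ in place of $D_*^{1/2}$ and changes nothing essential.
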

\begin{proof}
By \eqref{name of Ck} and  \eqref{var:representation}, we have 
\[
 C_k=\E[XT_k(X)]-\E X\,\E T_k(X)=P_k-\rho t_k.
\]
Hence the fixed-cap flux identity \eqref{flux:identity} gives
\[
 C_k=\nu(h_k-t_kd)
\]
as globally integrable functions. Using the weak convergence
$q_nd_n\rightharpoonup h_k$ from \eqref{flux:weak-caps}, together with
$d_n\rightharpoonup d$ in $L^2(Q_T)$ from \eqref{mom:weak}, we obtain
\begin{equation}\label{var:flux-integrated}
\begin{aligned}
 \int_{Q_T}C_k
 &=\nu\left(\int_{Q_T}h_k-\int_{Q_T}t_kd\right)\\
 &=\nu\lim_{n\to\infty}\int_{Q_T}d_n(T_k(\rho_n)-t_k).
\end{aligned}
\end{equation}

The first-moment convergence in \eqref{flux:weak-caps} and the
second-moment convergence in \eqref{var:representation} yield
\begin{equation}\label{var:cap-L2}
\begin{aligned}
 \lim_{n\to\infty}\int_{Q_T}|T_k(\rho_n)-t_k|^2
 &=\lim_{n\to\infty}\left(
    \int_{Q_T}T_k(\rho_n)^2
   -2\int_{Q_T}t_kT_k(\rho_n)
   +\int_{Q_T}t_k^2\right)\\
 &=\int_{Q_T}\bigl(\E T_k(X)^2-t_k^2\bigr)
 =\int_{Q_T}V_k.
\end{aligned}
\end{equation}


Since \(C_k\ge0\) applying the Cauchy--Schwarz inequality to \eqref{var:flux-integrated}, we have 
\[
\begin{aligned}
\int_{Q_T} C_k
&\le
\nu\limsup_{n\to\infty}
\left|
\int_{Q_T}
d_n\bigl(T_k(\rho_n)-t_k\bigr)
\right| \\
&\le
\nu\limsup_{n\to\infty}
\|d_n\|_{L^2(Q_T)}
\|T_k(\rho_n)-t_k\|_{L^2(Q_T)}
\\&\le
\nu D_*^{1/2}
\left(\int_{Q_T}V_k\right)^{1/2},
\end{aligned}
\]
where we used \eqref{var:cap-L2}, and 
\(
\|d_n\|_{L^2(Q_T)}\le D_*^{1/2}.
\)


 Since $V_k\le C_k$ by
Lemma~\ref{var:abstract},
\[
 \int_{Q_T}C_k
 \le \nu D_*^{1/2}
      \left(\int_{Q_T}C_k\right)^{1/2}.
\]
If $\int_{Q_T}C_k=0$, the desired bound is immediate; otherwise,
dividing by its square root gives
\(
 \int_{Q_T}C_k\le\nu^2D_*.
\)
Finally, $C_k\uparrow V$ by \eqref{var:monotone}, so the monotone
convergence theorem yields
\[
 \int_{Q_T}V
 =\lim_{k\to\infty}\int_{Q_T}C_k
 \le\nu^2D_*,
\]
which is \eqref{var:global}.
\end{proof}

\begin{remark}
The estimate above controls oscillations about the local barycenter; it
does not imply either $\rho\in L^2(Q_T)$ or
$\E X^2\in L^1(Q_T)$. Indeed, the Dirac Young measure
$\eta_{t,x}=\delta_{\rho(t,x)}$ has zero variance independently of
whether $\rho\in L^2(Q_T)$. On the other hand, for almost every
$(t,x)$, the finiteness of $V(t,x)$ and $\rho(t,x)$ gives
\[
 \E X^2=V+\rho^2<\infty.
\]
Thus a finite second moment is available pointwise in the Young
probability space, but no global $L^1(Q_T)$ bound for $\E X^2$ is
asserted.
\end{remark}

\subsection{Endpoint cap-source control}

We now use the integrable variance bound to control the truncation
residual on regions of bounded cap mean. The resulting integrable
majorant is independent of $k$. Dominated convergence then shows
that the residual vanishes as $k\to\infty$.
For each fixed $M\ge0$, we introduce 
\[
 A_{k,M}=\{(t,x)\in Q_T:t_k(t,x)\le M\}.
\]

\begin{proposition}[Endpoint cap-source control]\label{var:residual}
For every integer $k>2M$,
\begin{equation}\label{var:residual-majorant}
 \mathbf1_{A_{k,M}}\E|a_k(X)|^2\le4V_k\le4V
 \qquad\text{almost everywhere in }Q_T.
\end{equation}
Moreover,
\begin{equation}\label{var:residual-limit}
 \int_{A_{k,M}}\E|a_k(X)|^2\longrightarrow0
 \qquad\text{as }k\to\infty.
\end{equation}
\end{proposition}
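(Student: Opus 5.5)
The plan has two parts: a pointwise majorant on $A_{k,M}$, obtained from the cap bounds \eqref{flux:cap-bounds}, and then the limit \eqref{var:residual-limit}, obtained by dominated convergence with the single majorant $4V$ from Proposition~\ref{var:bound}.

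\textbf{Pointwise majorant.} Fix $(t,x)\in A_{k,M}$ at which the Young measure representation holds. By \eqref{flux:cap-bounds}, $a_k$ vanishes on $[0,k]$ and $|a_k(s)|\le T_k(s)$ for $s>k$. Since $T_k$ is increasing and agrees with the identity on $[0,k]$, we have $T_k(s)\ge k$ for $s>k$. Using $t_k\le M<k/2$, for $s>k$ this gives
\[
T_k(s)-t_k\ge k-\tfrac k2=\tfrac k2\ge\tfrac12T_k(s)\quad\text{whenever }T_k(s)\le k,
\]
but this case is the wrong one to check, since in fact $T_k(s)\ge k$. The correct route is the general inequality $T_k(s)-t_k\ge T_k(s)-T_k(s)/2$, which holds because $t_k\le M<k/2\le T_k(s)/2$. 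Hence $0\le T_k(s)\le 2(T_k(s)-t_k)$ for $s>k$, and so pointwise
\[
|a_k(s)|^2\le 4|T_k(s)-t_k|^2\mathbf 1_{\{s>k\}}\le4|T_k(s)-t_k|^2 .
\]
Integrating against $\eta_{t,x}$ yields $\mathbf1_{A_{k,M}}\E|a_k(X)|^2\le4V_k$. Lemma~\ref{var:abstract} gives $V_k\le C_k\le V$, since $C_k$ increases to $V$. This proves \eqref{var:residual-majorant}.

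\textbf{Limit.} By Proposition~\ref{var:bound}, $V\in L^1(Q_T)$. It follows that $V<\infty$ almost everywhere, and with $\rho<\infty$ this gives $\E X^2=V+\rho^2<\infty$ almost everywhere. Since $|a_k(s)|^2\le s^2\mathbf1_{\{s>k\}}$, dominated convergence in the Young probability space shows $\E|a_k(X)|^2\le\E[X^2\mathbf1_{\{X>k\}}]\to0$ at almost every point. Therefore $\mathbf1_{A_{k,M}}\E|a_k(X)|^2\to0$ almost everywhere, and the integrand is dominated by the $k$-independent majorant $4V\in L^1(Q_T)$. Lebesgue's dominated convergence theorem then gives \eqref{var:residual-limit}.

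The sets $A_{k,M}$ vary with $k$, but this causes no difficulty. Only the indicator changes with $k$, and it is bounded by $1$, so the majorant is unaffected. The only delicate point is the elementary comparison $T_k(s)\le2(T_k(s)-t_k)$ on the tail, which requires both $k>2M$ and $T_k\ge k$ beyond $k$. I expect no further obstacle, since the integrability of $V$ was already secured in Proposition~\ref{var:bound}.
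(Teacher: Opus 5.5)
Your proof is correct and follows the paper's argument step by step. The three stages are the same:
\begin{itemize}
\item the tail comparison $t_k\le M<k/2\le T_k(s)/2$ for $s>k$, which gives $|a_k(s)|\le T_k(s)\le 2(T_k(s)-t_k)$ on $A_{k,M}$;
\item the chain $V_k\le C_k\le V$;
\item dominated convergence, first in the Young probability space using $\E X^2=V+\rho^2<\infty$ a.e., and then on $Q_T$ with the $k$-independent majorant $4V$.
\end{itemize}
The only fix needed is presentational: delete your abandoned first attempt (the case ``$T_k(s)\le k$''), which plays no role in the argument.
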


\begin{proof}
By the cap bounds \eqref{flux:cap-bounds},
\[
 a_k(s)\le0,\qquad
 |a_k(s)|\le T_k(s)\mathbf1_{\{s>k\}}
             \le s\mathbf1_{\{s>k\}}.
\]
Fix $(t,x)\in A_{k,M}$. If $s>k$, then, since $T_k$ is increasing and
$T_k(k)=k$,
\[
 T_k(s)\ge k>2M\ge2t_k.
\]
Hence
\[
 T_k(s)-t_k\ge \frac12T_k(s),
\]
and therefore
\[
 |a_k(s)|\le T_k(s)\le2|T_k(s)-t_k|.
\]
If $s\le k$, then $T_k(s)=s$ and $T_k'(s)=1$, so $a_k(s)=0$ and the
same inequality holds. Consequently, on $A_{k,M}$,
\[
 \E|a_k(X)|^2
 \le4\E|T_k(X)-t_k|^2
 =4V_k.
\]
Thus
\[
 \mathbf1_{A_{k,M}}\E|a_k(X)|^2\le4V_k\le4V,
\]
where the last inequality follows from Lemma~\ref{var:abstract}. This
proves \eqref{var:residual-majorant}.

Since $V\in L^1(Q_T)$, we have $V(t,x)<\infty$ for almost every
$(t,x)\in Q_T$. At such a point, $\rho(t,x)<\infty$ and
\[
 \E X^2=V+\rho^2<\infty.
\]
For each fixed $s<\infty$, one has $a_k(s)=0$ once $k>s$; hence
$a_k(X)\to0$ pointwise in the Young probability variable. Moreover,
by \eqref{flux:cap-bounds},
\(
 |a_k(X)|^2\le X^2,
\)
and $X^2$ is integrable with respect to $\eta_{t,x}$. Dominated
convergence in the Young probability space therefore gives
\(
 \E|a_k(X)|^2\longrightarrow0
\)
for almost every $(t,x)\in Q_T$.

Let
\[
 F_k(t,x):=\mathbf1_{A_{k,M}}(t,x)\,\E|a_k(X)|^2.
\]
By \eqref{var:residual-majorant},
\[
 0\le F_k\le4V\in L^1(Q_T).
\]
Moreover, for almost every $(t,x)$,
\[
 0\le F_k(t,x)\le\E|a_k(X)|^2\longrightarrow0.
\]
A second application of the dominated convergence theorem, now on
$Q_T$, yields
\[
 \int_{A_{k,M}}\E|a_k(X)|^2
 =\int_{Q_T}F_k\longrightarrow0,
\]
which proves \eqref{var:residual-limit}.
\end{proof}
In the supercritical argument, the cap residual is made small through an
interpolation gain; see \cite[Proposition~7.1, (7.6)]{Feireisl} and
\cite[Lemma~4.4, (4.25)]{FNP}. This gain is no longer available at the
linear pressure law. Proposition~\ref{var:residual} replaces it by the
quadratic oscillation information obtained above: on the region of
bounded cap mean, the residual is dominated by the single integrable
quantity \(4V\). Thus the relevant issue at the endpoint is not the
absence of oscillation control, but how to convert that control into an
integrable domination strong enough to remove the cap residual. No
global \(L^2\)-bound on the density is used in this argument.

\subsection{Cap-mean discrepancy}
We now compare $T_k(\rho)$ with the cap mean $t_k$.
The integrable variance $V$ bounds their squared difference.
Dominated convergence then yields strong $L^2(Q_T)$ convergence
of this difference to zero as $k\to\infty$.

\begin{corollary}[Cap-mean discrepancy]\label{var:discrepancy}
With $e_k:=T_k(\rho)-t_k$,
\begin{equation}\label{var:discrepancy-limit}
 0\le e_k,\qquad e_k\longrightarrow0
                 \quad\text{strongly in }L^2(Q_T).
\end{equation}
Also $t_k\uparrow\rho$ almost everywhere and in $L^1(Q_T)$.
\end{corollary}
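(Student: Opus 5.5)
The plan is to apply Lemma~\ref{var:abstract} pointwise to the Young measure $\eta_{t,x}$ at almost every $(t,x)$, using $\E X=\rho$ and the representation \eqref{var:representation}. The Jensen part \eqref{var:jensen} of that lemma applies wherever $V(t,x)<\infty$. By Proposition~\ref{var:bound} this holds almost everywhere, since $V\in L^1(Q_T)$. It gives, almost everywhere,
\[
 0\le e_k=T_k(\rho)-t_k,\qquad e_k^2\le V .
\]
This is the sign statement in \eqref{var:discrepancy-limit}, and it also supplies a $k$-independent integrable majorant $V$ for $e_k^2$.

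For the strong convergence, I will show $e_k\to0$ pointwise almost everywhere and then apply dominated convergence with majorant $V$, which yields $\int_{Q_T}e_k^2\to0$. For the pointwise limit, note first that $T_k(\rho)\to\rho$ at every point where $\rho<\infty$, because $T_k(\rho)=\rho$ once $k\ge\rho$. Next, $t_k=\E T_k(X)\to\E X=\rho$ by monotone convergence in the Young probability variable. Here $T_k(s)$ increases in $k$ to $s$: as in the proof of Lemma~\ref{var:abstract}, $T_k(s)=\int_0^sT'(\sigma/k)\,d\sigma$ and $T'(\sigma/k)\uparrow1$. The only input needed is $\E X=\rho<\infty$, which holds almost everywhere since $\rho\in L^1(Q_T)$. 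Combining the two limits gives $e_k\to0$ almost everywhere.

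The final assertion follows from the same monotonicity. Because $T_k\uparrow\mathrm{id}$ pointwise, we get $t_k\uparrow\rho$ almost everywhere by monotone convergence for $\eta_{t,x}$. Since $0\le t_k\le\rho\in L^1(Q_T)$, monotone convergence on $Q_T$, or dominated convergence with majorant $\rho$, then gives $t_k\to\rho$ in $L^1(Q_T)$.

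I expect no serious obstacle; the argument needs only the earlier results. The one point to check is that all the pointwise Young-measure statements hold on a common full-measure set. This is handled by taking the countable intersection over integer $k$ of the sets where \eqref{var:representation} holds, intersected with the sets $\{V<\infty\}$ and $\{\rho<\infty\}$. One must also confirm that $t_k$, defined earlier as the weak-$*$ limit, coincides almost everywhere with $\E T_k(X)$, and this is exactly \eqref{var:representation}.
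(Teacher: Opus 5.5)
Your proposal is correct and follows essentially the same route as the paper's proof. Lemma~\ref{var:abstract} gives $0\le e_k$ and $e_k^2\le V$, with $V\in L^1(Q_T)$ by Proposition~\ref{var:bound}; monotone convergence in the Young variable gives $t_k=\E T_k(X)\uparrow\rho$, hence $e_k\to0$ a.e. and $t_k\to\rho$ in $L^1$ with majorant $\rho$; dominated convergence then closes the argument, and your remarks on a common full-measure set and on identifying $t_k$ with $\E T_k(X)$ only make explicit what the paper leaves implicit.
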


\begin{proof}
Since \(T_k(s)\uparrow s\) for every \(s\ge0\), the monotone convergence
theorem in the Young probability variable gives
\[
 t_k=\E T_k(X)\uparrow\E X=\rho
 \qquad\text{a.e. in }Q_T.
\]
Moreover, \(0\le t_k\le\rho\), and \(\rho\in L^1(Q_T)\); hence
\[
 t_k\longrightarrow\rho
 \qquad\text{strongly in }L^1(Q_T).
\]
Since also \(T_k(\rho)\to\rho\) almost everywhere, we have
\[
 e_k=T_k(\rho)-t_k\longrightarrow0
 \qquad\text{a.e. in }Q_T.
\]
By \eqref{var:jensen} and \eqref{var:global},
\[
 0\le e_k^2\le V,\qquad V\in L^1(Q_T).
\]
The dominated convergence theorem therefore yields
\[
 \int_{Q_T}e_k^2\longrightarrow0,
\]
thus  \(e_k\to0\) strongly in \(L^2(Q_T)\).
\end{proof}

\section{Renormalization and truncated entropy closure}\label{sec:closure}

Fix \(Q=(0,T)\times\Omega\), and continue to work with the subsequence and
density Young measure obtained above. The passage from bounded-cap
renormalization to the truncated entropy identity follows the classical
scheme \cite{Feireisl,FNP}. To close the argument at the endpoint, we use
the cap-source estimate of Proposition~\ref{var:residual}. We use \(K\)
for the cap parameter that will be sent to infinity in the
renormalization argument, and reserve \(k\) for the entropy cap fixed
afterwards. The global continuity equation and conservation of mass from
Section~\ref{sec:momentum} remain available throughout.

\subsection{Bounded-cap renormalization}

For fixed \(K\), we return to the artificial-pressure approximation
sequence \((\rho_n,u_n)\) retained throughout the paper, where
\(\delta_n\downarrow0\). Thus \(n\) denotes the artificial-pressure
approximation index. Set
\[
 q_{K,n}=T_K(\rho_n),\qquad t_K=\E T_K(X),\qquad
 a_K(\rho_n)\operatorname{div}u_n\rightharpoonup S_K
       \quad\text{in }L^2(Q).
\]
The source \(S_K\) remains unidentified.

\begin{lemma}[Global cap equation and initial cap trace]\label{close:globalcap}
The zero extensions of $t_K,u,S_K$ satisfy
\begin{equation}\label{close:capequation}
 \partial_t t_K+\operatorname{div}(t_Ku)=-S_K,
 \qquad t_K|_{t=0}=T_K(\rho_0).
\end{equation}
For every smooth scalar test $\psi$ up to the spatial boundary,
vanishing near $t=T$,
\begin{equation}\label{close:capweak}
 \int_Q(t_K\partial_t\psi+t_Ku\cdot\nabla\psi-S_K\psi)
 +\int_\Omega T_K(\rho_0)\psi(0)=0.
\end{equation}
On any ball containing $\overline\Omega$, the zero-extended caps
$q_{K,n}:=T_K(\rho_n)$ converge to $t_K$ in $C([0,T];H^{-1})$.
\end{lemma}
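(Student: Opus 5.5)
We pass to the limit in the weak form of the fixed-parameter renormalized continuity equation with $b=T_K$, using test functions defined up to the boundary. Proposition~\ref{app:fixed} with $b=T_K$ (derivative compactly supported) gives, for every smooth $\psi$ on $[0,T)\times\overline\Omega$,
\[
 \int_Q\bigl(T_K(\rho_n)\partial_t\psi+T_K(\rho_n)u_n\cdot\nabla\psi
 -a_K(\rho_n)\operatorname{div}u_n\,\psi\bigr)
 +\int_\Omega T_K(\rho_{0,n})\psi(0)=0 .
\]
The $\psi$ need not vanish on $\partial\Omega$: since $u_n\in W^{1,2}_0$, the zero extension of $T_K(\rho_n)$ satisfies the equation on $\mathbb R^2$ with zero-extended $u_n$ and source. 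The terms pass to the limit as follows.
\begin{itemize}
\item The first term converges by $q_{K,n}\weakstar t_K$.
\item The source term converges by $a_K(\rho_n)\operatorname{div}u_n\rightharpoonup S_K$ in $L^2(Q)$.
\item The initial term converges because $T_K$ is one-Lipschitz and $\rho_{0,n}\to\rho_0$ in $L^1$.
\item The transport term $T_K(\rho_n)u_n$ is a product of a weak-$*$ and a weak sequence, so it needs the global cap compactness described next.
\end{itemize}

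For the global cap compactness, fix a ball $B\supset\overline\Omega$ and take the zero extensions. The functions $q_{K,n}$ are bounded by $2K$ and supported in $\overline\Omega$, hence relatively compact in $H^{-1}(B)$, and in fact in $W^{-1,r}(B)$ for every $r<\infty$, by Rellich duality. The equation gives
\[
 \partial_tq_{K,n}=-\operatorname{div}(q_{K,n}u_n)-a_K(\rho_n)\operatorname{div}u_n .
\]
This is bounded in $L^2(0,T;H^{-1}(B))$, because $q_{K,n}u_n$ is bounded in $L^2_tL^2_x$ and the source is bounded in $L^2$. So the family is equicontinuous with H\"older-$1/2$ modulus in $H^{-1}(B)$ and pointwise relatively compact. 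Arzel\`a--Ascoli, as in the proof of \eqref{flux:cap-compactness} but now without the cutoff $\chi$, gives convergence in $C([0,T];H^{-1}(B))$. The limit is identified as $t_K$ by the weak-$*$ convergence.

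To pass the transport term, let $\tilde\psi$ be a smooth extension of $\psi$ to $B$. Then $u_n\cdot\nabla\tilde\psi$ converges weakly in $L^2_tH^1_0(B)$, and pairing it with the strong $L^2_tH^{-1}$ convergence gives $q_{K,n}u_n\to t_Ku$ against $\nabla\psi$. This is exactly the argument of Lemma~\ref{flux:nonlocal-lemma}, now global since $u_n$ vanishes on $\partial\Omega$. With all four terms passed, \eqref{close:capweak} follows.

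It remains to identify the initial trace in \eqref{close:capequation}. Since $q_{K,n}\to t_K$ in $C([0,T];H^{-1}(B))$, $t_K$ has a weakly continuous representative, and $t_K(0)=\lim_nT_K(\rho_{0,n})=T_K(\rho_0)$ in $H^{-1}$. Here it matters that $\rho_n(0)=\rho_{0,n}$ is attained in $C_{\mathrm{weak}}$ at the fixed-parameter level. The bound $|t_K|\le 2K$ upgrades this to weak-$*$ continuity in $L^\infty$, so the initial term in \eqref{close:capweak} matches.

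The step I expect to need the most care is the global time-equicontinuity with tests up to the boundary. It relies on the zero extension of the renormalized equation being valid across $\partial\Omega$, which should follow from $u_n\in W^{1,2}_0$ and the fixed-$n$ renormalization on $\overline\Omega$, and on $S_K$ being defined globally rather than locally. The fixed-cap flux estimates are not needed here.
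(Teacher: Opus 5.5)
Your proposal is correct and follows the paper's proof essentially step by step. The steps match in order: the global approximate cap equation, the $L^2(0,T;H^{-1})$ bound on $\partial_t q_{K,n}$ on a ball containing $\overline\Omega$, Arzel\`a--Ascoli in $C([0,T];H^{-1})$, the strong--weak pairing for $q_{K,n}u_n$, and identification of the initial trace from $T_K(\rho_{0,n})\to T_K(\rho_0)$.

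The boundary step you flag is handled in the paper exactly as you anticipate. It uses cutoffs $\chi_a$ and Hardy's inequality $\|u_n/d_\partial\|_{L^2}\lesssim\|\nabla u_n\|_{L^2}$. One small correction: $q_{K,n}(0)=T_K(\rho_{0,n})$ comes from the initial term $b(r_0)$ in the renormalized identity. It does not follow from weak continuity of $\rho_n$, which says nothing about the nonlinear quantity $T_K(\rho_n)$.
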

\begin{proof}
We first globalize the approximate cap equation. Let
$d_\partial(x)=\operatorname{dist}(x,\partial\Omega)$ and choose
$\chi_a=0$ for $d_\partial<a$, $\chi_a=1$ for $d_\partial>2a$,
with $|\nabla\chi_a|\le C/d_\partial$. Hardy's inequality gives,
for bounded $w$ and $v\in L^2_tW^{1,2}_0(\Omega)$,
\begin{equation}\label{close:hardy}
 \int_Q|wv\cdot\nabla\chi_a|
 \le C\|w\|_\infty
 |(0,T)\times\{d_\partial<2a\}|^{1/2}
 \|v/d_\partial\|_{L^2(Q)}\longrightarrow0,
\end{equation}
as $a\to 0$. At fixed $K$, this convergence is uniform for
$(w,v)=(q_{K,n},u_n)$. Recall that
\[
 \partial_t q_{K,n}+\operatorname{div}(q_{K,n}u_n)
 =-a_K(\rho_n)\operatorname{div}u_n.
\]
Testing this equation with $\chi_a\psi$, the only boundary-layer term is
\[
 \int_Q q_{K,n}u_n\cdot\nabla\chi_a\,\psi,
\]
which tends to zero uniformly in $n$ by \eqref{close:hardy}. All
remaining terms pass to the limit as $a\to 0$, yielding the global
approximate cap equation with its initial term.

We next establish compactness of the caps. After zero extension to a
fixed ball containing $\overline\Omega$, the sequence $(q_{K,n})_n$ is
bounded in $L^\infty(0,T;L^2)$ and has fixed bounded support. Moreover,
both $q_{K,n}u_n$ and
$a_K(\rho_n)\operatorname{div}u_n$ are uniformly bounded in $L^2(Q)$,
so
\[
 \|\partial_t q_{K,n}\|_{L^2(0,T;H^{-1})}\le C_K.
\]
Consequently, for $0\le s<t\le T$,
\[
 \|q_{K,n}(t)-q_{K,n}(s)\|_{H^{-1}}
 \le C_K|t-s|^{1/2}.
\]
Since $L^2$ is compactly embedded into $H^{-1}$ on the auxiliary ball,
Arzel\`a--Ascoli yields relative compactness in
$C([0,T];H^{-1})$. The weak-$*$ limit of $q_{K,n}=T_K(\rho_n)$ is
$t_K=\E T_K(X)$, hence
\[
 q_{K,n}\longrightarrow t_K
 \qquad\text{in }C([0,T];H^{-1}),\text{ as } n\to \infty.
\]
Pairing this strong cap convergence with the weak
$L^2_tH^1_x$ convergence of $u_n$ gives
$q_{K,n}u_n\to t_Ku$ in distributions as $n\to\infty.$

It remains to identify the initial cap trace. Since
$q_{K,n}(0)=T_K(\rho_{0,n})$, the strong convergence of $q_{K,n}$ in
$C([0,T];H^{-1})$ gives, as $n\to\infty$, we have 
\[
 T_K(\rho_{0,n})\longrightarrow t_K(0)
 \qquad\text{in }H^{-1}.
\]
On the other hand,
\[
 \|T_K(\rho_{0,n})-T_K(\rho_0)\|_{L^p}^p
 \le(4K)^{p-1}\|\rho_{0,n}-\rho_0\|_{L^1}\longrightarrow0
 \quad(1\le p<\infty),
\]
so $t_K(0)=T_K(\rho_0)$ in $H^{-1}$. Passing $n\to\infty$ in the
global approximate cap equation therefore gives \eqref{close:capweak}
and hence \eqref{close:capequation}. The initial trace is thus obtained
without invoking any nonlinear trace for the density Young measure.
\end{proof}


We use the classical renormalization techniques for bounded solutions of
the continuity equation with source; see \cite{DL,Feireisl,FNP}. In the
present low-regularity setting, it follows by spatial mollification and
the standard commutator estimate, which controls the
defect generated by commuting the transport operator with convolution.
Thus, if \(w\in L^\infty\), \(v\in L^2_tH^1_x\), and \(G\in L^2\) satisfy
\[
 \partial_t w+\operatorname{div}(wv)=G,
\]
then, for every \(C^1\) function \(b\) on the bounded range of \(w\),
\[
 \partial_t b(w)+\operatorname{div}(b(w)v)
 +(wb'(w)-b(w))\operatorname{div}v=b'(w)G
\]
in distributions.

Applying this to \eqref{close:capequation} gives
\begin{equation}\label{close:renormcap}
 \partial_t b(t_K)+\operatorname{div}(b(t_K)u)
 +(t_Kb'(t_K)-b(t_K))\operatorname{div}u
 =-b'(t_K)S_K.
\end{equation}
To retain the nonlinear initial value, extend \(t_K\) to \(-1<t<0\)
by \(T_K(\rho_0)\) and set \(u=S_K=0\) there. By
\eqref{close:capweak}, this extension satisfies the cap equation
distributionally across \(t=0\). Renormalizing the bounded extension
therefore yields the initial value \(b(T_K(\rho_0))\). This step uses
neither strong convergence of the density nor renormalization of the
unbounded limiting density.


\subsection{Endpoint source removal}
We now let $K\to\infty$ in the renormalized cap equation.
Using the estimate of Proposition~\ref{var:residual}, we show
that $b'(t_K)S_K\to0$ in $L^1(Q)$.
This yields renormalized continuity for $\rho$, with initial
value $b(\rho_0)$.

\begin{proposition}[Limiting renormalization]\label{close:renormalization}
For every $b\in C^1([0,\infty))$ with compactly supported derivative,
\begin{equation}\label{close:renormrho}
 \partial_tb(\rho)+\operatorname{div}(b(\rho)u)
 +(\rho b'(\rho)-b(\rho))\operatorname{div}u=0
\end{equation}
globally, with initial value $b(\rho_0)$. For the tests of
Lemma~\ref{close:globalcap},
\begin{align}\label{close:renormweak}
 &\int_Q[b(\rho)\partial_t\psi+b(\rho)u\cdot\nabla\psi
       -(\rho b'(\rho)-b(\rho))\operatorname{div}u\,\psi]
       \notag\\
 &\hspace{35mm}+\int_\Omega b(\rho_0)\psi(0)=0.
\end{align}
\end{proposition}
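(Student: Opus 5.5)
Start from the renormalized cap equation \eqref{close:renormcap}, in weak form with initial term $b(T_K(\rho_0))$ obtained from the extension across $t=0$, and let $K\to\infty$ term by term. Fix $b\in C^1([0,\infty))$ with $\supp b'\subset[0,M]$. Then $b$ is bounded and Lipschitz, and $|rb'(r)-b(r)|\le M\|b'\|_\infty+\|b\|_\infty$.

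By Corollary~\ref{var:discrepancy}, $t_K\to\rho$ almost everywhere and in $L^1(Q)$. Hence $b(t_K)\to b(\rho)$ and $t_Kb'(t_K)-b(t_K)\to\rho b'(\rho)-b(\rho)$ almost everywhere, with uniform bounds. Dominated convergence, together with $u\in L^2_tH^1_0$ and $\operatorname{div}u\in L^2$, then passes $\int b(t_K)\partial_t\psi$, $\int b(t_K)u\cdot\nabla\psi$ and $\int(t_Kb'(t_K)-b(t_K))\operatorname{div}u\,\psi$ to their $\rho$-counterparts. For the initial term, $T_K(\rho_0)\to\rho_0$ pointwise, so $b(T_K(\rho_0))\to b(\rho_0)$ boundedly, and dominated convergence on $\Omega$ applies.

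The main step is to show $\int_Q b'(t_K)S_K\psi\to0$. Since $b'(t_K)$ vanishes off $\{t_K\le M\}=A_{K,M}$, we have
\[
 \Bigl|\int_Q b'(t_K)S_K\psi\Bigr|\le\|b'\|_\infty\|\psi\|_\infty\int_{A_{K,M}}|S_K|.
\]
I would bound $\int_{A_{K,M}}|S_K|$ by weak lower semicontinuity. For any $\theta\in L^\infty$ with $|\theta|\le\mathbf 1_{A_{K,M}}$,
\[
 \int\theta S_K=\lim_n\int\theta\,a_K(\rho_n)\operatorname{div}u_n\le D_*^{1/2}\liminf_n\Bigl(\int\mathbf 1_{A_{K,M}}|a_K(\rho_n)|^2\Bigr)^{1/2}.
\]
Because $a_K$ is bounded and continuous at fixed $K$ and $\mathbf 1_{A_{K,M}}$ is a fixed $L^1$ weight, the Young measure representation gives
\[
 \int\mathbf 1_{A_{K,M}}|a_K(\rho_n)|^2\to\int_{A_{K,M}}\E|a_K(X)|^2 .
\]
This holds provided $|a_K|^2$ is among the bounded cap functions whose weak-$*$ limits were extracted; otherwise the fundamental theorem of Young measures supplies it. Taking the supremum over $\theta$ yields
\[
 \int_{A_{K,M}}|S_K|\le|Q|^{1/2}D_*^{1/2}\Bigl(\int_{A_{K,M}}\E|a_K(X)|^2\Bigr)^{1/2}.
\]
A cleaner route is Cauchy--Schwarz in $L^2$, using $\|\mathbf 1_AS_K\|_{L^2}\le\liminf\|\mathbf 1_Aa_K(\rho_n)d_n\|_{L^2}$; I would check which of the two versions is valid. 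The subtlety is that $\mathbf 1_A$ multiplies the product $a_Kd_n$ rather than $a_K$ alone, so Hölder has to be applied before taking the limit. The right-hand side tends to $0$ by Proposition~\ref{var:residual} applied with $K>2M$.

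Collecting the limits gives \eqref{close:renormweak} for smooth tests up to the boundary vanishing near $T$, and hence \eqref{close:renormrho} globally. The only nontrivial point is the source estimate, specifically justifying that restricting to $A_{K,M}$ commutes with the weak limit in the lower semicontinuity step. This works because $A_{K,M}$ is a fixed set at fixed $K$ and the bound uses only $\|d_n\|_{L^2}$.
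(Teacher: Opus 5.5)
Your argument is correct and is essentially the paper's proof. The paper also passes the bounded terms using $t_K\to\rho$ a.e.\ and dominated convergence. It removes the source by testing $S_K$ against the bounded function $\zeta=b'(t_K)\operatorname{sgn}(b'(t_K)S_K)$, applying Cauchy--Schwarz before $n\to\infty$, and invoking Proposition~\ref{var:residual}. This is exactly your first (duality) version, and it yields $\int_{A_{K,M}}|S_K|\le D_*^{1/2}\bigl(\int_{A_{K,M}}\E|a_K(X)|^2\bigr)^{1/2}$ without your superfluous factor $|Q|^{1/2}$.

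Discard your ``cleaner'' $L^2$ lower-semicontinuity route. It only gives $\|\mathbf 1_{A_{K,M}}a_K(\rho_n)d_n\|_{L^2}\le 2KD_*^{1/2}$, which does not decay in $K$.

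One step you omit: the paper first treats $b(0)=0$, to match the zero extension behind \eqref{close:renormcap}, and then restores the constant $b(0)$. That step is trivial, since constants satisfy the weak equation.
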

\begin{proof}
Choose $M$ so that $b'=0$ on $[M,\infty)$, and set
$A_{K,M}=\{t_K\le M\}$.

At fixed \(K\), we have
\[
 a_K(\rho_n)\operatorname{div}u_n\rightharpoonup S_K
 \qquad\text{in }L^2(Q).
\]
Hence, for every bounded measurable \(\zeta\),
\[
 \int_Q\zeta S_K
 =
 \lim_{n\to\infty}
 \int_Q\zeta\,a_K(\rho_n)\operatorname{div}u_n.
\]
By the Cauchy--Schwarz inequality,
\[
 \left|
 \int_Q\zeta\,a_K(\rho_n)\operatorname{div}u_n
 \right|
 \le
 \|\operatorname{div}u_n\|_{L^2(Q)}
 \left(
 \int_Q|\zeta|^2|a_K(\rho_n)|^2
 \right)^{1/2}.
\]
Using the definition of \(D_*\), we obtain
\[
 \left|\int_Q\zeta S_K\right|
 \le
 D_*^{1/2}
 \lim_{n\to\infty}
 \left(
 \int_Q|\zeta|^2|a_K(\rho_n)|^2
 \right)^{1/2}.
\]
Since \(a_K^2\) is bounded and continuous, the Young-measure
representation gives
\[
 |a_K(\rho_n)|^2
 \overset{*}{\rightharpoonup}
 \E|a_K(X)|^2
 \qquad\text{in }L^\infty(Q).
\]
Therefore
\[
 \int_Q|\zeta|^2|a_K(\rho_n)|^2
 \longrightarrow
 \int_Q|\zeta|^2\E|a_K(X)|^2,
\]
and hence
\begin{equation}\label{close:sourcebound}
 \left|\int_Q\zeta S_K\right|
 \le
 D_*^{1/2}
 \left(
 \int_Q|\zeta|^2\E|a_K(X)|^2
 \right)^{1/2}.
\end{equation}



Choosing 
\(
\zeta=b'(t_K)\operatorname{sgn}\!\bigl(b'(t_K)S_K\bigr)
\)
in \eqref{close:sourcebound}, then we have 
\[
\int_Q \zeta S_K
=
\int_Q |b'(t_K)S_K|
=
\|b'(t_K)S_K\|_{L^1(Q)}.
\]
Since \(b'=0\) on \([M,\infty)\),
\[
|b'(t_K)|^2
\le
\|b'\|_\infty^2\,\mathbf1_{A_{K,M}},
\;\;\;\text{ and }\;
A_{K,M}=\{t_K\le M\}.
\]
Hence \eqref{close:sourcebound} yields
\[
\begin{aligned}
\|b'(t_K)S_K\|_{L^1(Q)}
&\le
D_*^{1/2}
\left(
\int_Q
|b'(t_K)|^2\,\E|a_K(X)|^2
\right)^{1/2}\\
&\le
\|b'\|_\infty D_*^{1/2}
\left(
\int_{A_{K,M}}\E|a_K(X)|^2
\right)^{1/2}.
\end{aligned}
\]
For \(K>2M\), Proposition~\ref{var:residual} gives
\(
\int_{A_{K,M}}\E|a_K(X)|^2\longrightarrow0,
\)
and therefore
\begin{equation}\label{close:sourcevanishes}
\|b'(t_K)S_K\|_{L^1(Q)}
\longrightarrow0.
\end{equation}


Here we let 
\(
G(s):=sb'(s)-b(s),
\)
note that \(b'\) has compact support, both \(b\) and \(G\) are bounded and
continuous on \([0,\infty)\). As \(t_K\to\rho\) almost everywhere in
\(Q\), the dominated convergence theorem gives
\[
 b(t_K)\to b(\rho),\qquad
 G(t_K)\to G(\rho)
 \quad\text{strongly in }L^2(Q).
\]
Since \(u,\operatorname{div}u\in L^2(Q)\), these convergences allow the
transport and divergence terms in \eqref{close:renormcap} to pass to the
limit. Moreover,
\[
 b(T_K(\rho_0))\to b(\rho_0)
 \quad\text{strongly in }L^1(\Omega)
\]
by bounded dominated convergence. Together with
\eqref{close:sourcevanishes}, this yields \eqref{close:renormweak} as
\(K\to\infty\), provided first that \(b(0)=0\), so that the zero
extension used in the whole-space renormalization argument is compatible
with \(b(t_K)\).

For a general \(b\) with \(b(0)\ne0\), define
\(
 \widetilde b(s):=b(s)-b(0).
\)
Then \(\widetilde b(0)=0\) and \(\widetilde b'=b'\), so the preceding
argument applies to \(\widetilde b\). Restoring the constant \(b(0)\)
does not change the renormalized equation, since
\[
 \operatorname{div}\!\bigl(b(0)u\bigr)
 -b(0)\operatorname{div}u=0.
\]
The initial value is restored simultaneously from
\(\widetilde b(\rho_0)=b(\rho_0)-b(0)\). Hence
\eqref{close:renormweak}, and therefore \eqref{close:renormrho}, hold for
every \(b\in C^1([0,\infty))\) with compactly supported derivative.

\end{proof}

\subsection{Truncated entropy argument}
In this subsection, 
we want to prove strong convergence of the density in $L^1(Q)$.
We first establish the truncated entropy identities and identify
the initial entropy traces. Combining these identities with the
fixed-cap effective-flux identity, we show that the centered
variance $V$ vanishes. Uniform integrability then yields the
strong convergence.

\begin{lemma}[Truncated entropy identities]\label{close:entropyidentity}
For each fixed integer $k\ge1$, define
\begin{equation}\label{close:entropy}
 L_k(s)=s\int_1^s\frac{T_k(z)}{z^2}\,dz\quad(s>0),
 \qquad L_k(0)=0.
\end{equation}
Then $L_k$ is continuous, convex, and of at most linear growth, and
\begin{equation}\label{close:entropyidentities}
 sL_k'(s)-L_k(s)=T_k(s),\qquad
 L_k''(s)=\frac{T_k'(s)}s\ge0\quad(s>0).
\end{equation}
Moreover, the functions
\[
 \ell_{k,n}(t):=\int_\Omega L_k(\rho_n(t)),
 \qquad
 \ell_k(t):=\int_\Omega L_k(\rho(t))
\]
admit absolutely continuous representatives on $[0,T]$ satisfying
\begin{align}
 \ell_{k,n}'(t)&=-\int_\Omega T_k(\rho_n)\operatorname{div}u_n,
 \label{close:entropyapprox}\\
 \ell_k'(t)&=-\int_\Omega T_k(\rho)\operatorname{div}u,
 \label{close:entropylimit}
\end{align}
for almost every $t\in(0,T)$, with initial values
\[
 \ell_{k,n}(0)=\int_\Omega L_k(\rho_{0,n}),
 \qquad
 \ell_k(0)=\int_\Omega L_k(\rho_0).
\]
\end{lemma}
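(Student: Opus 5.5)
The plan has three parts: the elementary properties of $L_k$, the identity \eqref{close:entropyapprox} for the approximants, and the identity \eqref{close:entropylimit} for the limit. The second and third follow the same renormalization-and-cutoff pattern.

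\textbf{Properties of $L_k$.} For $s>0$, differentiate $L_k(s)=s\int_1^s T_k(z)z^{-2}\,dz$ directly:
\[
L_k'(s)=\int_1^s\frac{T_k(z)}{z^2}\,dz+\frac{T_k(s)}{s},
\]
so $sL_k'-L_k=T_k$. Differentiating once more gives $L_k''(s)=T_k'(s)/s\ge0$, hence convexity. Near $s=0$ we have $T_k(z)=z$, so $L_k(s)=s\log s$ there. This shows $L_k$ is continuous at $0$ with $L_k(0)=0$. For large $s$, $T_k\le 2k$ gives $\int_1^s T_kz^{-2}\le 2k$, so $|L_k(s)|\le C_k(1+s)$.

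\textbf{Approximate identity \eqref{close:entropyapprox}.} The difficulty is that $L_k'$ is unbounded near $0$ (logarithmic), so $L_k$ is not directly admissible in \eqref{app:fixed-renormalized}. I would regularize by $L_{k,\epsilon}(s)=L_k(s+\epsilon)-L_k(\epsilon)$, which is $C^2$ on $[0,\infty)$. It satisfies $|b|+|sb'|+|s^2b''|\le C(1+s^2)$, since $s^2L_k''(s+\epsilon)\le s$. This places it in the class \eqref{app:PWclass}. Its renormalized source is
\[
sL_{k,\epsilon}'(s)-L_{k,\epsilon}(s)=T_k(s+\epsilon)-\epsilon L_k'(s+\epsilon)+L_k(\epsilon)+\text{corrections},
\]
and these terms converge to $T_k(s)$ boundedly on bounded sets as $\epsilon\to0$, with linear growth overall. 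Since $\rho_n\in L^\infty_tL^\beta_x$, every term is integrable against $\operatorname{div}u_n\in L^2$. Next I test with $\psi(t)$, a function of time only. This is allowed because the renormalized equation was obtained on $\Omega$ with $u_n|_{\partial\Omega}=0$, so the global form with test functions up to the boundary holds, as for the mass equation. Letting $\epsilon\to0$ by dominated convergence then gives the weak form of $\ell_{k,n}'=-\int T_k(\rho_n)\operatorname{div}u_n$ together with its initial term. Absolute continuity follows because the right-hand side lies in $L^2(0,T)$. The initial value is taken from the weak continuity of $\rho_n$ in $L^\beta$ and the initial term of the renormalized identity.

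\textbf{Limit identity \eqref{close:entropylimit}: the main obstacle.} Here only Proposition~\ref{close:renormalization} is available, which covers $b$ with compactly supported $b'$, whereas $L_k$ has linear growth and a logarithmic singularity at $0$. I would approximate $L_k$ by $b_{j}(s)=L_{k,\epsilon}(T_j(s))$-type compositions, or more simply by functions $b_{j,\epsilon}$ with $b_{j,\epsilon}'=L_{k,\epsilon}'\,\theta(s/j)$ for a cutoff $\theta$. For each of these, \eqref{close:renormweak} holds with test functions $\psi(t)$, including the initial term $b(\rho_0)$. Passing $j\to\infty$ requires the source terms $(\rho b'-b)\operatorname{div}u$ to converge in $L^1(Q)$. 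They are dominated by $C(1+\rho)|\operatorname{div}u|$ only on $\{\rho\le 3j\}$. On $\{\rho>j\}$ the source is bounded by $C_k\,\rho\,\mathbf 1_{\{\rho>j\}}|\operatorname{div}u|$, and $\rho\operatorname{div}u$ need not be integrable. To handle this, I would exploit that $L_k'$ is bounded for large $s$ (by $C_k\log$-free growth, since $T_k$ is bounded) and choose the cutoff so that $\rho b_j'-b_j$ stays bounded by $C_k(1+\text{something bounded})$. Concretely, take $b_j(s)=L_k(s)$ for $s\le j$, continued linearly with slope $L_k'(j)$. Then $sb_j'-b_j=T_k(j)$ stays bounded by $2k$. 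The derivative $b_j'$ is not compactly supported but becomes constant, and constants are handled by adding a multiple of the mass equation, exactly as $b(0)$ was handled. After this reduction the sources are uniformly bounded by $2k|\operatorname{div}u|$ and converge pointwise to $T_k(\rho)\operatorname{div}u$. The terms $b_j(\rho)$ converge in $L^1$ by linear growth and $\rho\in L^\infty_tL^1$. Finally $\epsilon\to0$ is handled as in the approximate case. This yields \eqref{close:entropylimit} with $\ell_k(0)=\int L_k(\rho_0)$, and absolute continuity follows again from the $L^2(0,T)$ right-hand side.
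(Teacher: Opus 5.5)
Your proposal is correct and follows essentially the paper's route. You regularize at vacuum by $L_k(s+\epsilon)-L_k(\epsilon)$, split off the linear part using the global continuity equation, and apply Proposition~\ref{close:renormalization} to the bounded remainder; for the approximants you use the quadratic class \eqref{app:PWclass} at fixed $n$ instead, which is equally valid. You then let $\epsilon\downarrow0$ and test with functions of time only.

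Your $j$-continuation step is superfluous, however. Since $T_k=2k$ on $[3k,\infty)$, one has $L_k(s)=c_ks-2k$ there, with $c_k=\int_1^\infty T_k(z)z^{-2}\,dz$. So your $b_j$ coincides with $L_k$ for $j\ge3k$, and the source coefficient $sL_k'-L_k=T_k\le2k$ is bounded from the start. This is exactly the paper's splitting $L_k=B_k+c_ks$, with $B_k$ bounded and constant on $[3k,\infty)$.
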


\begin{proof}
A direct differentiation of \eqref{close:entropy} gives
\eqref{close:entropyidentities}, and hence the convexity of $L_k$.
Since $T_k(s)=s$ on $[0,k]$ and $k\ge1$, we have
$L_k(s)=s\log s$ on $(0,1]$; in particular, $L_k$ extends continuously
to zero. Its derivative, however, has the logarithmic singularity at
vacuum, so a regularization will be needed before applying
Proposition~\ref{close:renormalization}.

We first separate the affine tail of $L_k$. Set
\begin{equation}\label{close:affine}
 c_k=\int_1^\infty\frac{T_k(z)}{z^2}\,dz,
 \qquad B_k(s)=L_k(s)-c_ks.
\end{equation}
For $s>0$,
\[
 B_k(s)=-s\int_s^\infty\frac{T_k(z)}{z^2}\,dz.
\]
Thus $B_k$ is bounded. Since $T_k(z)=2k$ for $z\ge3k$,
\[
 B_k(s)=-2k,
 \qquad
 L_k(s)=c_ks-2k
 \qquad\text{for }s\ge3k.
\]
This proves, in particular, that $L_k$ has at most linear growth.

We next remove the logarithmic singularity at zero. For
$0<\eta<1/2$, define
\[
 L_{k,\eta}(s)=L_k(s+\eta)-L_k(\eta),
 \qquad
 B_{k,\eta}(s)=L_{k,\eta}(s)-c_ks.
\]
Then $B_{k,\eta}(0)=0$, and $B_{k,\eta}$ is bounded with continuous,
compactly supported derivative. Hence
Proposition~\ref{close:renormalization} applies to $B_{k,\eta}$.
The renormalization coefficient is unchanged by subtracting the affine
term $c_ks$ and is given by
\[
\begin{aligned}
 g_{k,\eta}(s)
 &:=sL_{k,\eta}'(s)-L_{k,\eta}(s)\\
 &=T_k(s+\eta)-\eta L_k'(s+\eta)+L_k(\eta).
\end{aligned}
\]
The logarithmic behavior of $L_k'$ near zero, the affine tail above,
and the one-Lipschitz property of $T_k$ yield
\begin{equation}\label{close:vacuumestimate}
 \sup_{s\ge0}\bigl(
 |L_{k,\eta}(s)-L_k(s)|+|g_{k,\eta}(s)-T_k(s)|
 \bigr)
 \le C_k\eta(1+|\log\eta|)\longrightarrow0.
\end{equation}
Indeed, the first difference is controlled by integrating $L_k'$ over
an interval of length $\eta$, while the displayed formula for
$g_{k,\eta}$, together with
$|T_k(s+\eta)-T_k(s)|\le\eta$, gives the second.

We now apply Proposition~\ref{close:renormalization} to the bounded
remainder $B_{k,\eta}(\rho)$. Since
\[
 L_{k,\eta}(\rho)=B_{k,\eta}(\rho)+c_k\rho,
\]
adding $c_k$ times the global continuity equation recovers the linear
part and gives
\[
 \partial_tL_{k,\eta}(\rho)
 +\operatorname{div}(L_{k,\eta}(\rho)u)
 +g_{k,\eta}(\rho)\operatorname{div}u=0
\]
globally, with initial value $L_{k,\eta}(\rho_0)$. Thus the bounded
remainder is handled by Proposition~\ref{close:renormalization}, whereas
the linear part $c_k\rho$ is recovered directly from the global
continuity equation; in particular, no boundary estimate on
$\rho u/d_\partial$ is required. The same argument, using the
renormalized continuity equation available for each approximant, gives
\[
 \partial_tL_{k,\eta}(\rho_n)
 +\operatorname{div}(L_{k,\eta}(\rho_n)u_n)
 +g_{k,\eta}(\rho_n)\operatorname{div}u_n=0
\]
with initial value $L_{k,\eta}(\rho_{0,n})$.

Finally, by \eqref{close:vacuumestimate},
$L_{k,\eta}\to L_k$ and $g_{k,\eta}\to T_k$ uniformly on
$[0,\infty)$ as $\eta\downarrow0$, with $k$ fixed.
Since $u_n,u$ and their divergences belong to $L^1(Q)$,
we can pass to the limit in every term of the two weak
formulations. The uniform convergence also gives convergence
of the initial terms. We therefore obtain the global identities
\[
 \partial_tL_k(\rho_n)
 +\operatorname{div}(L_k(\rho_n)u_n)
 +T_k(\rho_n)\operatorname{div}u_n=0,
\]
and
\[
 \partial_tL_k(\rho)
 +\operatorname{div}(L_k(\rho)u)
 +T_k(\rho)\operatorname{div}u=0,
\]
with initial values $L_k(\rho_{0,n})$ and $L_k(\rho_0)$,
respectively.

Testing these identities with smooth functions depending only
on time eliminates the transport terms and yields
\eqref{close:entropyapprox}--\eqref{close:entropylimit}.
Since $0\le T_k\le2k$ and
$\operatorname{div}u_n,\operatorname{div}u\in L^2(Q)$,
the right-hand sides belong to $L^1(0,T)$.
Hence $\ell_{k,n}$ and $\ell_k$ admit absolutely continuous
representatives on $[0,T]$, with the stated initial values.
\end{proof}

\begin{lemma}[Zero initial entropy defect]\label{close:initialdefect}
For each fixed integer $k\ge1$, the sequence
$(\ell_{k,n})_{n\ge1}$ is bounded in $H^1(0,T)$ and hence precompact in
$C([0,T])$. After a diagonal refinement of the approximation sequence
over $k\in\mathbb N$, still denoted by $n$, we may assume that,
simultaneously for every integer $k\ge1$,
\[
 \ell_{k,n}\longrightarrow\overline\ell_k
 \qquad\text{uniformly on }[0,T],
\]
for some $\overline\ell_k\in H^1(0,T)$. Moreover,
\begin{equation}\label{close:barentropy}
 \overline\ell_k(t)
   =\int_\Omega\E L_k(X)(t,x)\,dx
 \quad\text{for a.e. }t,\qquad
 \overline\ell_k'(t)
   =-\int_\Omega h_k(t,x)\,dx
 \quad\text{for a.e. }t,
\end{equation}
and
\begin{equation}\label{close:zerotrace}
 \overline\ell_k(0)
 =\int_\Omega L_k(\rho_0)\,dx
 =\ell_k(0).
\end{equation}
\end{lemma}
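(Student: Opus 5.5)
The plan has three steps: uniform bounds in $H^1(0,T)$ from \eqref{close:entropyapprox}, identification of the limit through the Young measure, and identification of the initial value, which is the real content of the lemma.

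\textbf{Uniform bounds and extraction.} By \eqref{close:entropyapprox},
\[
 |\ell_{k,n}'(t)|\le 2k\|\operatorname{div}u_n(t)\|_{L^1(\Omega)},
\]
which is bounded in $L^2(0,T)$ uniformly in $n$ by \eqref{app:uniformenergy}. The values $\ell_{k,n}(t)$ are uniformly bounded: $L_k$ has linear growth, $L_k(s)\ge s\log s$ is bounded below near zero, and mass is bounded. Hence $(\ell_{k,n})_n$ is bounded in $H^1(0,T)$. The compact embedding $H^1(0,T)\Subset C([0,T])$ and a diagonal extraction over $k\in\mathbb N$ then give uniform convergence to some $\overline\ell_k\in H^1(0,T)$, with $\ell_{k,n}'\rightharpoonup\overline\ell_k'$ weakly in $L^2(0,T)$.

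\textbf{Identification of $\overline\ell_k$ and its derivative.} Write $L_k(s)=B_k(s)+c_ks$ with $B_k$ bounded and continuous, as in \eqref{close:affine}. The Young-measure representation gives $B_k(\rho_n)\weakstar\E B_k(X)$, and $\rho_n\rightharpoonup\rho=\E X$. Hence, for every $\theta\in L^\infty(0,T)$,
\[
 \int_0^T\theta\,\ell_{k,n}\,dt\longrightarrow\int_0^T\theta\int_\Omega\E L_k(X)\,dx\,dt .
\]
This identifies $\overline\ell_k$ almost everywhere. For the derivative, test \eqref{close:entropyapprox} against $\theta$ and use $T_k(\rho_n)\operatorname{div}u_n\rightharpoonup h_k$ in $L^2(Q)$ from \eqref{flux:weak-caps}. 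This gives $\overline\ell_k'=-\int_\Omega h_k$, proving \eqref{close:barentropy}.

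\textbf{Initial value (main obstacle).} Uniform convergence gives
\[
 \overline\ell_k(0)=\lim_{n\to\infty}\ell_{k,n}(0)=\lim_{n\to\infty}\int_\Omega L_k(\rho_{0,n}),
\]
so it remains to show $\int_\Omega L_k(\rho_{0,n})\to\int_\Omega L_k(\rho_0)$. This is where the initial approximation must be strong. By Lemma~\ref{app:data}, $\rho_{0,n}\to\rho_0$ in $L^1(\Omega)$. For $s\ge1$ the function $L_k$ is globally Lipschitz: its derivative is $L_k'(s)=\int_1^sT_k(z)z^{-2}\,dz+T_k(s)/s$, bounded on $[1,\infty)$ by $c_k+1$. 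Near zero $L_k(s)=s\log s$ is uniformly continuous on $[0,1]$ and bounded. So $L_k$ is uniformly continuous on $[0,\infty)$ with linear growth. Passing to a subsequence converging almost everywhere, the family $L_k(\rho_{0,n})$ is dominated in the uniform-integrability sense by $C_k(1+\rho_{0,n})$, and $\rho_{0,n}$ is uniformly integrable because it converges in $L^1$. Vitali's theorem then yields the convergence; since the limit is unique, the full refined sequence converges. This proves \eqref{close:zerotrace}, and the identity $\ell_k(0)=\int_\Omega L_k(\rho_0)$ is already part of Lemma~\ref{close:entropyidentity}.

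The subtle point is that $\overline\ell_k(0)$ is a pointwise value of a function identified only almost everywhere through the Young measure. The $H^1$ continuity is exactly what transfers the value at time zero from the approximants, avoiding any nonlinear trace of $\eta_{t,x}$ at $t=0$. The main care is to ensure the diagonal refinement is performed after all earlier identifications, so that these identifications persist along the refined subsequence.
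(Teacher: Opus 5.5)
Your proposal is correct and follows the paper's proof essentially step by step. The steps match: the $H^1(0,T)$ bound from \eqref{close:entropyapprox} and linear growth, the diagonal extraction over $k$, the Young-measure identification of $\overline\ell_k$ and of $\overline\ell_k'$ through $h_k$, and Vitali for $\int_\Omega L_k(\rho_{0,n})$, transferred to $\overline\ell_k(0)$ by uniform convergence. The only variation is that you identify the weak limit of $L_k(\rho_n)$ through the affine splitting $L_k=B_k+c_ks$ of \eqref{close:affine}, which is slightly more direct than the paper's truncation-plus-uniform-integrability argument.
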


\begin{proof}
Fix an integer $k\ge1$. By the linear growth of $L_k$ and the uniform
mass bound,
\[
 \sup_n\|\ell_{k,n}\|_{L^\infty(0,T)}\le C_k.
\]
Moreover, \eqref{close:entropyapprox} and $0\le T_k\le2k$ give
\[
 |\ell_{k,n}'(t)|
 \le
 2k|\Omega|^{1/2}
 \|\operatorname{div}u_n(t)\|_{L^2(\Omega)}.
\]
Hence $(\ell_{k,n})_n$ is bounded in $H^1(0,T)$. Since
\(
 H^1(0,T)\Subset C([0,T]),
\)
for the  fixed $k$ there exists a subsequence $n_j=n_j(k)$ and a
function $\overline\ell_k\in H^1(0,T)$ such that
\[
 \ell_{k,n_j}\longrightarrow\overline\ell_k
 \qquad\text{uniformly on }[0,T].
\]
Since the integer cap levels form a countable family, a diagonal
extraction yields a single subsequence of the approximation index $n$,
still denoted by $n$, for which this convergence holds simultaneously
for every integer $k\ge1$. The diagonal refinement is chosen to retain
all previously established weak limits.

We next identify the scalar limit. Since $L_k$ is continuous and has at
most linear growth,
\(
 |L_k(s)|\le C_k(1+s).
\)
The density sequence $(\rho_n)_n$ is uniformly integrable on $Q$.
Consequently, a standard truncation of $L_k$ by bounded continuous
functions, together with the Young-measure representation, gives
\[
 L_k(\rho_n)
 \rightharpoonup
 \E L_k(X)
 \qquad\text{weakly in }L^1(Q).
\]
Indeed, the Young-measure convergence applies directly to each bounded
truncation of $L_k$, while the linear growth of $L_k$ and the uniform
integrability of $\rho_n$ make the truncation tails uniformly small and
exclude a concentration contribution.

Let $\varphi\in C_c^\infty(0,T)$, then we have
\[
\begin{aligned}
 \int_0^T\varphi(t)\ell_{k,n}(t)\,dt
 &=
 \int_Q\varphi(t)L_k(\rho_n(t,x))\,dx\,dt\\
 &\longrightarrow
 \int_0^T\varphi(t)
 \left(\int_\Omega\E L_k(X)(t,x)\,dx\right)dt.
\end{aligned}
\]
On the other hand, the uniform convergence of $\ell_{k,n}$ gives
\[
 \int_0^T\varphi(t)\ell_{k,n}(t)\,dt
 \longrightarrow
 \int_0^T\varphi(t)\overline\ell_k(t)\,dt.
\]
Thus
\[
 \overline\ell_k(t)
 =
 \int_\Omega\E L_k(X)(t,x)\,dx
 \qquad\text{for a.e. }t.
\]

We now identify the time derivative. By
\eqref{close:entropyapprox},
\[
 \ell_{k,n}'(t)
 =
 -\int_\Omega
 T_k(\rho_n)\operatorname{div}u_n\,dx.
\]
By the fixed-cap convergence \eqref{flux:weak-caps},
\[
 T_k(\rho_n)\operatorname{div}u_n
 \rightharpoonup h_k
 \qquad\text{weakly in }L^2(Q),
\]
we obtain, for every $\varphi\in C_c^\infty(0,T)$,
\[
 \int_0^T\varphi(t)\ell_{k,n}'(t)\,dt
 \longrightarrow
 -\int_0^T\varphi(t)
 \left(\int_\Omega h_k(t,x)\,dx\right)dt.
\]
At the same time, the $H^1(0,T)$ boundedness and the uniform convergence
of $\ell_{k,n}$ imply
\[
 \ell_{k,n}'\rightharpoonup\overline\ell_k'
 \qquad\text{weakly in }L^2(0,T).
\]
Uniqueness of the weak limit therefore gives
\[
 \overline\ell_k'(t)
 =
 -\int_\Omega h_k(t,x)\,dx
 \qquad\text{for a.e. }t,
\]
which completes the proof of \eqref{close:barentropy}.

It remains to identify the initial value. Since
\[
 \rho_{0,n}\longrightarrow\rho_0
 \qquad\text{strongly in }L^1(\Omega),
\]
the sequence $(\rho_{0,n})_n$ is uniformly integrable and converges to
$\rho_0$ in measure. The continuity and linear growth of $L_k$ therefore
give, by Vitali's theorem,
\[
 L_k(\rho_{0,n})
 \longrightarrow
 L_k(\rho_0)
 \qquad\text{strongly in }L^1(\Omega).
\]
Consequently,
\[
 \ell_{k,n}(0)
 =\int_\Omega L_k(\rho_{0,n})\,dx
 \longrightarrow
 \int_\Omega L_k(\rho_0)\,dx.
\]
The uniform convergence on the closed interval $[0,T]$ also gives
\(
 \ell_{k,n}(0)\longrightarrow\overline\ell_k(0),
\)
and hence
\(
 \overline\ell_k(0)
 =
 \int_\Omega L_k(\rho_0)\,dx.
\)
The second equality in \eqref{close:zerotrace} follows independently
from the initial value in \eqref{close:entropylimit}. This proves
\eqref{close:zerotrace}.
\end{proof}

The uniform scalar compactness above identifies the nonlinear entropy
trace at $t=0$; the weak initial trace of the capped density alone is
insufficient for this purpose.


\begin{proposition}[Strong density compactness]\label{close:strongdensity}
Along the common subsequence,
\begin{equation}\label{close:strongLone}
 \rho_n\longrightarrow\rho\qquad\text{strongly in }L^1(Q).
\end{equation}
\end{proposition}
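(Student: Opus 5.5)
The plan is the classical truncated-entropy comparison: define the entropy defect
\[
 \mathcal D_k(t):=\overline\ell_k(t)-\ell_k(t)=\int_\Omega\bigl(\E L_k(X)-L_k(\rho)\bigr)(t,x)\,dx,
\]
show that its derivative is controlled by quantities vanishing as $k\to\infty$, and conclude that the Young measure is Dirac. By convexity of $L_k$ (Lemma~\ref{close:entropyidentity}) and Jensen, $\mathcal D_k\ge0$ for a.e.\ $t$. By Lemma~\ref{close:initialdefect}, $\mathcal D_k(0)=0$.

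\textbf{Step 1: the defect equation.} Combining \eqref{close:barentropy} with \eqref{close:entropylimit} gives, for a.e.\ $t$,
\[
 \mathcal D_k'(t)=\int_\Omega\bigl(T_k(\rho)\operatorname{div}u-h_k\bigr)\,dx .
\]
Substituting the fixed-cap flux identity \eqref{flux:identity}, $h_k=t_k\operatorname{div}u+\nu^{-1}C_k$, this becomes
\[
 \mathcal D_k'=\int_\Omega e_k\operatorname{div}u-\nu^{-1}\int_\Omega C_k ,
\]
with $e_k=T_k(\rho)-t_k$. Integrating from $0$ and using $\mathcal D_k\ge0$, $C_k\ge0$ yields
\[
 \nu^{-1}\int_0^t\!\!\int_\Omega C_k\le\int_0^t\!\!\int_\Omega e_k\operatorname{div}u\le\|e_k\|_{L^2(Q)}\|\operatorname{div}u\|_{L^2(Q)} .
\]

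\textbf{Step 2: letting $k\to\infty$.} Corollary~\ref{var:discrepancy} gives $e_k\to0$ in $L^2(Q)$, so $\int_Q C_k\to0$. Since $C_k\uparrow V$ (Lemma~\ref{var:abstract}), monotone convergence gives $\int_Q V=0$, i.e.\ $V=0$ a.e. Thus $\eta_{t,x}=\delta_{\rho(t,x)}$ almost everywhere.

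\textbf{Step 3: conclusion.} A Dirac Young measure together with the uniform integrability of $\rho_n$ (from \eqref{mom:UI}) gives convergence in measure on $Q$. Vitali's theorem then yields $\rho_n\to\rho$ strongly in $L^1(Q)$. An alternative route is to note that $T_k(\rho_n)\to T_k(\rho)$ in $L^2$ because $V_k=0$, and then to control the tails $\rho_n-T_k(\rho_n)$ uniformly by uniform integrability.

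\textbf{Main obstacle.} The delicate point is the rigor of Step 1 rather than its algebra. One must check that $\overline\ell_k$ and $\ell_k$ are compared as absolutely continuous functions with matching initial values; this is exactly what Lemma~\ref{close:initialdefect} supplies. One must also check that $\mathcal D_k\ge0$ holds pointwise in time, which follows from Jensen applied at a.e.\ $t$ together with continuity of both representatives. Finally, all terms must be integrable, which holds because $e_k$ and $\operatorname{div}u$ lie in $L^2$ and $C_k\in L^1$. No smallness on $k$ is needed beyond Corollary~\ref{var:discrepancy}, so the argument closes without any $L^2$ bound on $\rho$.
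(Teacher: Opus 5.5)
Your proposal is correct and follows essentially the same route as the paper. The paper also sets up the entropy defect $D_k=\overline\ell_k-\ell_k$, with $D_k(0)=0$ and $D_k\ge0$ for every $t$ by Jensen and continuity, and substitutes the fixed-cap flux identity to get $\nu^{-1}\int_0^t\!\int_\Omega C_k\le\|e_k\|_{L^2(Q)}\|\operatorname{div}u\|_{L^2(Q)}$; it then uses Corollary~\ref{var:discrepancy} and monotone convergence $C_k\uparrow V$ to force $V=0$, hence a Dirac Young measure, convergence in measure, and Vitali via entropy-based uniform integrability (your alternative ending via $V_k=0$ and uniform control of the tails $\rho_n-T_k(\rho_n)$ would also work).
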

\begin{proof}
Let
\[
 D_k(t):=\overline\ell_k(t)-\ell_k(t).
\]
By Lemma~\ref{close:initialdefect}, the definition of $\ell_k$, and
Jensen's inequality for the convex function $L_k$,
\[
 D_k(t)
 =\int_\Omega\bigl(\E L_k(X)-L_k(\rho)\bigr)\,dx
 \ge0
 \qquad\text{for a.e. }t,
\]
since $\E X=\rho$. Both $\overline\ell_k$ and $\ell_k$ belong to
$H^1(0,T)$, hence $D_k$ is continuous on $[0,T]$. Its a.e.
nonnegativity therefore implies $D_k(t)\ge0$ for every $t\in[0,T]$,
and Lemma~\ref{close:initialdefect} gives
\(
 D_k(0)=0.
\)

We next derive the evolution of the defect. By
\eqref{close:barentropy} and \eqref{close:entropylimit}, for almost every
$t\in(0,T)$,
\[
\begin{aligned}
 D_k'(t)
 &=\overline\ell_k'(t)-\ell_k'(t)\\
 &=-\int_\Omega h_k\,dx
   +\int_\Omega T_k(\rho)\operatorname{div}u\,dx.
\end{aligned}
\]
The fixed-cap flux identity \eqref{flux:identity} reads
\[
 C_k=\nu\bigl(h_k-t_k\operatorname{div}u\bigr),
\]
and therefore
\[
 h_k=t_k\operatorname{div}u+\frac1\nu C_k.
\]
Substituting this expression for $h_k$ gives
\[
\begin{aligned}
 D_k'(t)
 &=-\int_\Omega
    \left(t_k\operatorname{div}u+\frac1\nu C_k\right)dx
   +\int_\Omega T_k(\rho)\operatorname{div}u\,dx\\
 &=-\frac1\nu\int_\Omega C_k\,dx
   +\int_\Omega\bigl(T_k(\rho)-t_k\bigr)
                  \operatorname{div}u\,dx.
\end{aligned}
\]
Thus, with
\(
 e_k:=T_k(\rho)-t_k,
\)
we have
\begin{equation}\label{close:defect}
 D_k'(t)
 =-\frac1\nu\int_\Omega C_k\,dx
  +\int_\Omega e_k\operatorname{div}u\,dx
\end{equation}
for almost every $t\in(0,T)$.

Integrating \eqref{close:defect} from $0$ to $t$ and using $D_k(0)=0$
yields
\[
 D_k(t)+\frac1\nu\int_0^t\!\int_\Omega C_k
 =\int_0^t\!\int_\Omega e_k\operatorname{div}u.
\]
Since $D_k(t)\ge0$ and $C_k\ge0$ by
Lemma~\ref{var:abstract}, the left-hand side is nonnegative. Hence,
by the Cauchy--Schwarz inequality,
\begin{equation}\label{close:defectintegrated}
\begin{aligned}
 0
 &\le D_k(t)+\frac1\nu\int_0^t\!\int_\Omega C_k=\int_0^t\!\int_\Omega e_k\operatorname{div}u\le
 \|e_k\|_{L^2(Q)}\|\operatorname{div}u\|_{L^2(Q)}.
\end{aligned}
\end{equation}
By \eqref{var:discrepancy-limit},
$e_k\to0$ strongly in $L^2(Q)$, so the right-hand side of
\eqref{close:defectintegrated} tends to zero as $k\to\infty$.
Taking $t=T$ therefore gives
\(
 \int_Q C_k\longrightarrow0.
\)
On the other hand, $C_k\uparrow V$ almost everywhere by
\eqref{var:monotone}. The monotone convergence theorem yields
\[
 \int_Q V
 =\lim_{k\to\infty}\int_Q C_k
 =0.
\]
Since $V\ge0$,
\begin{equation}\label{close:zerovariance}
 V=0\qquad\text{a.e. on }Q.
\end{equation}
Consequently,
\[
 \E|X-\rho|^2=0
\]
for almost every $(t,x)$, and hence the density Young measure is the
Dirac mass
\[
 \eta_{t,x}=\delta_{\rho(t,x)}
 \qquad\text{for a.e. }(t,x)\in Q.
\]

We now show that this implies convergence in measure. For
$\varepsilon>0$, define the bounded Carath\'eodory function
\[
 \Phi_\varepsilon(t,x,z)
 :=\min\left\{1,
       \frac{|z-\rho(t,x)|^2}{\varepsilon^2}\right\}.
\]
The Young-measure representation gives
\[
 \int_Q\Phi_\varepsilon(t,x,\rho_n(t,x))\,dx\,dt
 \longrightarrow
 \int_Q\left\langle\eta_{t,x},
        \Phi_\varepsilon(t,x,\cdot)\right\rangle dx\,dt=0,
\]
because $\eta_{t,x}=\delta_{\rho(t,x)}$. Since
$\Phi_\varepsilon(t,x,\rho_n)=1$ on
$\{|\rho_n-\rho|\ge\varepsilon\}$, it follows that
\[
 \bigl|\{(t,x)\in Q:|\rho_n-\rho|\ge\varepsilon\}\bigr|
 \longrightarrow0,
\]
which implies that 
\[
 \rho_n\longrightarrow\rho
 \qquad\text{in measure on }Q.
\]

Now we need  to upgrade this convergence to $L^1(Q)$. The entropy part of
\eqref{app:uniformenergy} gives
\[
 \sup_n\operatorname*{ess\,sup}_{0<t<T}
 \int_\Omega \rho_n\log(1+\rho_n)\,dx\le C.
\]
Note that on the set $\{\rho_n>R\}$, we have 
$\log(1+\rho_n)\ge\log(1+R)$, and therefore
\begin{equation}\label{close:UI}
\begin{aligned}
 \sup_n\int_{\{(t,x)\in Q:\,\rho_n>R\}}\rho_n\,dx\,dt
 &\le
 \frac{1}{\log(1+R)}
 \sup_n\int_Q\rho_n\log(1+\rho_n)\,dx\,dt\\
 &\le \frac{CT}{\log(1+R)}
 \longrightarrow0
 \qquad(R\to\infty).
\end{aligned}
\end{equation}
Hence $(\rho_n)_n$ is uniformly integrable on $Q$.

The entropy-tail estimate above implies uniform integrability of
\((\rho_n)_n\) on \(Q\). Indeed, for every measurable set
\(E\subset Q\) and every \(R>0\),
\[
 \int_E \rho_n
 \le
 R|E|
 +
 \int_{\{\rho_n>R\}}\rho_n .
\]
Given \(\varepsilon>0\), first choose \(R\) so large that the second term
is smaller than \(\varepsilon/2\), uniformly in \(n\), and then choose
\(|E|<\varepsilon/(2R)\). Thus
\(
 \sup_n\int_E\rho_n<\varepsilon
\)
whenever \(|E|\) is sufficiently small. Hence \((\rho_n)_n\) is
uniformly integrable on \(Q\). Together with convergence in measure,
Vitali's theorem gives
\[
 \rho_n\longrightarrow\rho
 \qquad\text{strongly in }L^1(Q),
\]
which is \eqref{close:strongLone}.

\end{proof}

The conclusion is global although the earlier density gain and
momentum compactness were local. The global variance bound,
the entropy identities integrated over $\Omega$, and uniform
integrability of the density yield strong density convergence
on the whole cylinder $Q$. Neither uniform integrability of
the entropy densities nor global disappearance of artificial
pressure is required.

\section{Energy admissibility and global existence}\label{sec:existence}

With strong density convergence and renormalized continuity established
in Section~\ref{sec:closure}, we turn to the proofs of the main results.
We first complete the proof of Theorem~\ref{thm:stability} and then
deduce Theorem~\ref{thm:main} from the approximation constructed in
Section~\ref{sec:approx}.

\begin{proof}[Proof of Theorem~\ref{thm:stability}]
Section~\ref{sec:momentum} gives momentum and convection compactness,
the distributional equations for spatially interior momentum tests,
and the initial traces. Sections~\ref{sec:flux}--\ref{sec:closure}
give global renormalization and strong density convergence.
It remains to establish the initial-time energy inequality and to
extend the momentum equation to the zero-boundary test class in
Definition~\ref{def:renormalized-solution}.

We first pass to the limit in the energy inequality. Consider the
nonnegative lower semicontinuous convex energy density
\begin{equation}\label{exist:perspective}
\begin{aligned}
 e(r,z)&=\frac{|z|^2}{2r}+H(r),\qquad r>0,\\
 e(0,0)&=1,\qquad e(0,z)=+\infty\quad(z\ne0).
\end{aligned}
\end{equation}
The value at $(0,0)$ agrees with the normalization $H(0)=1$.
To use the available space-time convergence, we first integrate the
approximate energy inequality against an arbitrary nonnegative time
weight. This avoids requiring convergence of the approximate energies
at individual times.

Let $\theta\in C_c^\infty(0,T)$, $\theta\ge0$, and set
\[
 \Theta(s)=\int_s^T\theta(t)\,dt.
\]
Multiplying \eqref{exist:approxenergy} by $\theta(t)$ and integrating
in $t$, Fubini's theorem rewrites the accumulated dissipation with
weight $\Theta$. Dropping the nonnegative artificial internal energy
then gives
\begin{equation}\label{exist:weighted}
 \int_{Q_T}\theta(t)e(\rho_n,m_n)
 +\int_{Q_T}\Theta(t)\mathbb S(u_n):\nabla u_n
 \le E_{0,n}\int_0^T\theta(t)\,dt.
\end{equation}
By Proposition~\ref{close:strongdensity} and \eqref{mom:weak},
\[
 \rho_n\longrightarrow\rho
 \quad\text{strongly in }L^1(Q_T),
 \qquad
 m_n\rightharpoonup m
 \quad\text{weakly in }L^1(Q_T).
\]
Convex lower semicontinuity therefore yields
\[
 \int_{Q_T}\theta(t)e(\rho,m)
 \le
 \liminf_{n\to\infty}
 \int_{Q_T}\theta(t)e(\rho_n,m_n).
\]
For the dissipation, the 2D identity
\[
 \mathbb S(v):\nabla v
 =2\mu|D^0v|^2+(\mu+\lambda)|\operatorname{div}v|^2,
 \qquad
 D^0v=\frac{\nabla v+\nabla v^{\mathsf T}}2
       -\frac12\operatorname{div}v\,I
\]
expresses the integrand as a nonnegative convex quadratic form.
Since $\sqrt{\Theta}$ is bounded, weak $L^2$ convergence of
$\nabla u_n$ gives lower semicontinuity of the weighted dissipation
as well. Using $E_{0,n}\to E_0$ in \eqref{exist:weighted}, we obtain
\[
 \int_{Q_T}\theta(t)e(\rho,m)
 +\int_{Q_T}\Theta(t)\mathbb S(u):\nabla u
 \le E_0\int_0^T\theta(t)\,dt.
\]
Applying Fubini's theorem once more gives
\[
 \int_0^T\theta(t)\left[
 \int_\Omega e(\rho,m)(t)\,dx
 +\int_0^t\!\int_\Omega\mathbb S(u):\nabla u\,dx\,ds
 -E_0\right]dt\le0.
\]
Since this holds for every nonnegative
$\theta\in C_c^\infty(0,T)$, the bracketed quantity is nonpositive
for almost every $t\in(0,T)$. Together with $m=\rho u$, this proves
\eqref{eq:energy}.

We next extend the momentum test class. The limit equation obtained
in Section~\ref{sec:momentum} is already valid for tests supported
strictly inside $\Omega$. Let $\varphi$ be a smooth vector test on
$[0,T)\times\overline\Omega$ with
$\varphi|_{\partial\Omega}=0$. Set
\[
 d_\partial(x)=\operatorname{dist}(x,\partial\Omega),
 \qquad
 \Omega_a=\{x\in\Omega:d_\partial(x)<2a\}.
\]
Choose smooth cutoffs $\xi_a\in C_c^\infty(\Omega)$ satisfying
\[
 0\le\xi_a\le1,\qquad
 \xi_a=0\ \text{if }d_\partial<a,\qquad
 \xi_a=1\ \text{if }d_\partial>2a,
 \qquad
 |\nabla\xi_a|\le \frac{C}{d_\partial}.
\]
Such cutoffs can be obtained by smoothing distance cutoffs.
Since $\varphi$ vanishes on the boundary and has bounded spatial
derivatives,
\[
 |\varphi(t,x)|\le C d_\partial(x),
 \qquad
 |\varphi\otimes\nabla\xi_a|\le C,
\]
with constants independent of $a$.

For each fixed $a>0$, the field $\xi_a\varphi$ is an admissible
interior test. Expanding
\[
 \nabla(\xi_a\varphi)
 =\xi_a\nabla\varphi+\varphi\otimes\nabla\xi_a,
\]
we see that the additional spatial derivative terms are
\[
 R_a=
 \int_{Q_T}
 \bigl(\rho u\otimes u+\rho I-\mathbb S(u)\bigr)
 :(\varphi\otimes\nabla\xi_a).
\]
The preceding bounds and the support of $\nabla\xi_a$ imply
\[
 |R_a|
 \le
 C\int_{(0,T)\times\Omega_a}
 \bigl(\rho|u|^2+\rho+|\mathbb S(u)|\bigr)\,dx\,dt.
\]
The limiting energy bounds give
$\rho|u|^2,\rho\in L^1(Q_T)$, while
$u\in L^2(0,T;W^{1,2}_0(\Omega))$ gives
$\mathbb S(u)\in L^2(Q_T)\subset L^1(Q_T)$.
Thus the integrand is a fixed $L^1(Q_T)$ function, and absolute
continuity of its integral over the shrinking boundary layer yields
$R_a\to0$.


We now pass to the limit in the remaining terms. Since $\xi_a$
is independent of time,
\(
 \partial_t(\xi_a\varphi)=\xi_a\partial_t\varphi.
\)
Thus the momentum identity tested with $\xi_a\varphi$ reads
\begin{align*}
 0={}&
 \int_{Q_T}\xi_a
 \bigl[
 m\cdot\partial_t\varphi
 +(\rho u\otimes u):\nabla\varphi
 +\rho\operatorname{div}\varphi
 -\mathbb S(u):\nabla\varphi
 \bigr]\,dx\,dt\\
 &+\int_\Omega\xi_a m_0\cdot\varphi(0)\,dx
 +R_a.
\end{align*}
Here $R_a$ contains all terms in which a spatial derivative falls
on $\xi_a$.

Because $\xi_a=1$ outside $\Omega_a$ and $0\le\xi_a\le1$, replacing
$\xi_a$ by $1$ in the space-time integral produces an error bounded by
\[
 C_\varphi
 \int_{(0,T)\times\Omega_a}
 \bigl(
 |m|+\rho|u|^2+\rho+|\mathbb S(u)|
 \bigr)\,dx\,dt,
\]
where $C_\varphi$ depends only on the bounded derivatives of the
fixed test field $\varphi$. All four terms in the integrand belong
to $L^1(Q_T)$. Since the boundary layers $\Omega_a$ shrink to the
empty set inside $\Omega$ as $a\downarrow0$, absolute continuity
of the integral shows that this error tends to zero. This
justifies the passage in both the time term and the spatial
terms containing $\xi_a$.

The initial term is treated in the same way:
\[
 \left|
 \int_\Omega(\xi_a-1)m_0\cdot\varphi(0)\,dx
 \right|
 \le
 \|\varphi(0)\|_{L^\infty(\Omega)}
 \int_{\Omega_a}|m_0|\,dx
 \longrightarrow0,
\]
because $m_0\in L^1(\Omega)$.

The terms involving $\nabla\xi_a$ require the zero boundary trace
of $\varphi$. Indeed, this trace gives
$|\varphi|\le C d_\partial$, which compensates for the bound
$|\nabla\xi_a|\le C/d_\partial$. Consequently, the estimate for
$R_a$ established above shows that these terms also tend to zero.
No boundary traces of the convection or viscous stress are needed;
the argument uses their volume integrability and the vanishing
boundary trace of the test field.

Letting $a\downarrow0$ in the tested identity therefore gives
\[
 \int_{Q_T}
 \bigl[
 m\cdot\partial_t\varphi
 +(\rho u\otimes u):\nabla\varphi
 +\rho\operatorname{div}\varphi
 -\mathbb S(u):\nabla\varphi
 \bigr]\,dx\,dt
 +\int_\Omega m_0\cdot\varphi(0)\,dx=0.
\]
This is \eqref{weak:mom} for every smooth zero-boundary test
$\varphi$ in the stated class. The boundary passage is performed
on the fixed limiting fields, after $n\to\infty$, and therefore
requires no global artificial-pressure convergence.

\end{proof}

Theorem~\ref{thm:main} follows by applying the stability theorem to
the artificial-pressure solutions constructed in
Section~\ref{sec:approx}. A diagonal extraction ensures that the
same subsequence converges on every finite time interval, yielding
a global solution with the prescribed initial data.

\begin{proof}[Proof of Theorem~\ref{thm:main}]
If $\int_\Omega\rho_0\,dx=0$, then $\rho_0=m_0=0$ almost
everywhere, and $(\rho,u)=(0,0)$ is a global solution with
$E_0=|\Omega|$. The approximation assertion follows by taking
$\rho_n=1/n$, $u_n=0$, and $\delta_n=1/n$: these are
artificial-pressure solutions with spatially constant pressure,
their initial augmented energies converge to $|\Omega|$, and
$\rho_n\to0$ strongly in $L^1(Q_T)$ for every finite $T$.
We henceforth assume that the initial mass is positive.

\medskip
\noindent
\emph{Step 1: Construction of the approximating sequence.}
Apply Lemma~\ref{app:data} to obtain smooth initial data satisfying
\[
 \rho_{0,n}\longrightarrow\rho_0,\qquad
 m_{0,n}\longrightarrow m_0
 \quad\text{in }L^1(\Omega),
\]
together with convergence of the natural initial energies.
Choose $\delta_n$ as in \eqref{app:delta}. Then
\[
 \delta_n\int_\Omega\rho_{0,n}^{\beta}\,dx
 \le\frac1n\longrightarrow0,
\]
so the initial augmented energies satisfy
$E_{0,n}\to E_0$ by \eqref{app:initialenergy}.

For each $n$, choose a global solution $(\rho_n,u_n)$ furnished
by Proposition~\ref{app:fixed}, with initial data
$(\rho_{0,n},m_{0,n})$ and pressure
$\rho_n+\delta_n\rho_n^\beta$.
Fix this choice for the remainder of the proof.
The individual energy inequalities, the boundedness of $E_{0,n}$,
and the coercivity identity \eqref{app:coercivity} give
\eqref{app:uniformenergy} on every finite time interval.
Lemma~\ref{app:PW}, applied to these same solutions with
$\alpha=1/8$, supplies the local density gain required by
Theorem~\ref{thm:stability}.
Thus this single sequence satisfies all hypotheses of that theorem
on every finite time interval.

\medskip
\noindent
\emph{Step 2: Passage to a global limit.}
Applying Theorem~\ref{thm:stability} successively on
$Q_L=(0,L)\times\Omega$, $L=1,2,\ldots$, and using a standard
diagonal extraction, we obtain a single subsequence indexed by
$n_k$ and a pair $(\rho,u)$ such that, for every $T<\infty$,
\[
 \rho_{n_k}\longrightarrow\rho
 \quad\text{strongly in }L^1(Q_T),
 \qquad
 u_{n_k}\rightharpoonup u
 \quad\text{weakly in }L^2(0,T;W^{1,2}_0(\Omega)).
\]
The extractions are nested, so the limits agree
on overlapping intervals, including their weakly continuous
density and momentum representatives.
By Theorem~\ref{thm:stability}, the restriction of $(\rho,u)$
to $(0,T)$ for every $T>0$ is a renormalized finite-energy weak
solution with initial data $(\rho_0,m_0)$.
Thus $(\rho,u)$ is a global solution in the sense of
Definition~\ref{def:renormalized-solution}, and the displayed
strong density convergence gives the asserted approximation.

\end{proof}

\appendix
\section{Approximation details and uniform PW constants}\label{appendix:approx}

This appendix supplies the technical inputs of Section~\ref{sec:approx}.
The fixed artificial-pressure limits are taken before the endpoint
compactness argument, with constants allowed to depend on $\delta$.

\subsection{Energy-consistent initial regularization}\label{appendix:initial}

\begin{proof}[Proof of Lemma~\ref{app:data}]
Set $u_0=m_0/\rho_0$ on $\{\rho_0>0\}$ and zero elsewhere.
Finite energy gives $M_0=\int\rho_0<\infty$ and
$u_0\in L^2(\rho_0\,dx)$. Density of $C_c^\infty(\Omega;\mathbb R^2)$
in this weighted space provides $v_n$ with
$\|\sqrt{\rho_0}(v_n-u_0)\|_2\le n^{-1}$.

For $R\ge1$, $s_R=\rho_0\wedge R$ satisfies $H(s_R)\le H(\rho_0)$,
so $(s_R,H(s_R))\to(\rho_0,H(\rho_0))$ in $L^1$ by dominated
convergence. With a nonnegative mollifier and zero extension, put
$r_{R,h,c}=\varphi_h*\widetilde s_R+c$. At fixed $R$, these smooth
positive densities lie in $[c,R+c]$ and converge to $s_R$ in $L^1$
and almost everywhere as $h,c\downarrow0$. Continuity of $H$ on
$[0,R+1]$ gives the same $L^1$ convergence for their energies.
A diagonal choice therefore produces smooth $r_j$, bounded above and
away from zero individually, with
$(r_j,H(r_j))\to(\rho_0,H(\rho_0))$ in $L^1$.

Choose $j(n)$ increasing so that
\[
 (1+\|v_n\|_\infty^2)\|r_{j(n)}-\rho_0\|_1\le n^{-1},\qquad
 \|H(r_{j(n)})-H(\rho_0)\|_1\le n^{-1},
\]
and set $\rho_{0,n}=r_{j(n)}$, $u_{0,n}=v_n$.
The inequality $|\sqrt a-\sqrt b|^2\le|a-b|$ yields
\[
 \left\|\sqrt{\rho_{0,n}}u_{0,n}
        -\frac{m_0}{\sqrt{\rho_0}}\right\|_2
 \le n^{-1}+\|v_n\|_\infty\|\rho_{0,n}-\rho_0\|_1^{1/2}\to0.
\]
Moreover,
\[
 \|\rho_{0,n}v_n-m_0\|_1
 \le\|v_n\|_\infty\|\rho_{0,n}-\rho_0\|_1
      +\sqrt{M_0}\|\sqrt{\rho_0}(v_n-u_0)\|_2\to0.
\]
These are the claimed convergences, including the exact initial
kinetic and internal energies.
\end{proof}

\subsection{The fixed-pressure construction on a \texorpdfstring{$C^2$}{C2} domain}
\label{appendix:fixed}

\begin{proof}[Proof of Proposition~\ref{app:fixed}]
We use the approximation and effective-flux framework of
\cite[Sections~2--3]{FNP}, with the logarithmic physical potential and
planar exponents specified below. The source treats three-dimensional
$C^{2+\eta}$ domains; the planar exponents, logarithmic potential,
$C^2$ parabolic input and initial-energy passages are supplied here. At fixed
$\delta$, the pressure is increasing and
$\lim_{z\to\infty}p_\delta(z)z^{-\beta}=\delta>0$; thus the general-pressure
extension in \cite[Section~7.5, p.~205, and Example~7.1, p.~207]{Lions} also corroborates the
high-power regime. Its smooth-domain convention is supplemented here
by finite-exponent $C^2$ Neumann regularity.

\emph{Galerkin approximation and energy.}
Choose $0<c_*\le\inf r_0$ and $\chi_\varepsilon\in C_c^\infty(\Omega)$,
$0\le\chi_\varepsilon\le1$, tending pointwise to one. The data
$r_{0,\varepsilon}=c_*+\chi_\varepsilon(r_0-c_*)$ are constant near the
boundary and converge to $r_0$ in every finite $L^q$ and in energy.
Let $X_N$ be nested finite-dimensional spaces of compactly supported
smooth vector fields, containing $v_0$ and dense in every finite
$W^{1,q}_0$. Solve
\begin{align}
 \partial_t r_N+\operatorname{div}(r_Nv_N)
 &=\varepsilon\Delta r_N,\qquad
 \partial_{\boldsymbol n}r_N=0,\quad r_N(0)=r_{0,\varepsilon},
 \label{app:parabolic}\\
 \frac{d}{dt}\int_\Omega r_Nv_N\cdot w
 &=\int_\Omega\left(r_Nv_N\otimes v_N:\nabla w
                  +p_\delta(r_N)\operatorname{div}w
                  -\mathbb S(v_N):\nabla w\right)\notag\\
 &\quad-\varepsilon\int_\Omega(\nabla r_N\cdot\nabla)v_N\cdot w,
 \qquad w\in X_N,\quad v_N(0)=v_0.\label{app:Galerkin}
\end{align}
Local solvability uses finite-exponent Neumann regularity;
for the scalar Laplacian on $C^2$ domains this follows from
Denk--Hieber--Pr\"uss, Theorem~2.1.\footnote{R. Denk, M. Hieber and
J. Pr\"uss, \emph{Optimal $L_p$--$L_q$-regularity for parabolic problems
with inhomogeneous boundary data}, Konstanzer Schriften in Mathematik
und Informatik, no.~205 (2005), Theorem~2.1, p.~7, and Proposition~3.3,
p.~9:
\href{https://d-nb.info/1093431571/34}{original preprint}.}
The scalar order-two operator is normally elliptic, the normal Neumann
operator satisfies the complementing condition, and the smooth
boundary-compatible data satisfy its trace conditions.
Comparison keeps $r_N$ positive. With
$\widehat H_\delta(r)=r\log r+\delta r^\beta/(\beta-1)$,
$r\widehat H_\delta'-\widehat H_\delta=p_\delta$ and
$\widehat H_\delta''=r^{-1}+\delta\beta r^{\beta-2}$.
The Galerkin diffusion correction cancels the kinetic cross term,
giving
\begin{equation}\label{app:exactenergy}
 \frac{d}{dt}\widehat E_N
 +\int_\Omega\mathbb S(v_N):\nabla v_N
 +\varepsilon\int_\Omega
       (r_N^{-1}+\delta\beta r_N^{\beta-2})|\nabla r_N|^2=0,
\end{equation}
where $\widehat E_N=\int(r_N|v_N|^2/2+\widehat H_\delta(r_N))$.
Coercivity \eqref{app:coercivity} and finite-dimensional norm
 equivalence control $\int\|\operatorname{div}v_N\|_\infty$ on each finite
time interval; comparison keeps the mass matrix positive and energy controls
its velocity coefficients. The approximate solution therefore continues.

\emph{The limit $N\to\infty$ at fixed $\varepsilon,\delta$.}
Energy gives $r_N\in L^\infty_tL^\beta_x$, $v_N\in L^2_tH^1_0$,
$\nabla r_N\in L^2(Q_T)$ and
$r_N^{\beta/2}\in L^\infty_tL^2_x\cap L^2_tH^1_x$, hence
$r_N\in L^{2\beta}(Q_T)$ by the planar space-time estimate.
For $a=1/2-1/\beta$, spatial H\"older yields
\[
 \|r_Nv_N\|_2
 \le\|r_N|v_N|^2\|_1^a\|r_N\|_\beta^{1-a}
       \|v_N\|_{4\beta/(\beta-2)}^{2/\beta}.
\]
Thus $r_Nv_N\in L^\beta_tL^2_x$. The continuity equation gives
$\partial_t r_N\in L^2_tH^{-1}_x$, and compactness implies
\begin{equation}\label{app:firstdensity}
 r_N\to r_\varepsilon\text{ in }L^q(Q_T),\ q<2\beta,
 \qquad p_\delta(r_N)\to p_\delta(r_\varepsilon)\text{ in }L^1(Q_T).
\end{equation}
For the negative-space heat estimate, put $X=(H^1(\Omega))^{\prime}$
and define $A_\varepsilon:H^1(\Omega)\to X$ by
\[
 \langle A_\varepsilon w,\phi\rangle
 =\int_\Omega(w\phi+\varepsilon\nabla w\cdot\nabla\phi),
 \qquad \phi\in H^1(\Omega).
\]
As an operator in $X$, $A_\varepsilon=I-\varepsilon\Delta_N$ has domain
$H^1(\Omega)$ and is positive self-adjoint in the equivalent Hilbert
norm $\|f\|_X=\|(I-\Delta_N)^{-1/2}f\|_2$, defined spectrally by the
Neumann Laplacian. It is therefore invertible and $R$-sectorial of
angle zero. Regard \eqref{app:parabolic} as
\[
 \partial_t r_N+A_\varepsilon r_N
 =r_N-\operatorname{div}(r_Nv_N)\in L^\beta(0,T;X).
\]
Here divergence is defined by pairing with $-\nabla\phi$; the forcing
bound follows from $r_Nv_N\in L^\beta_tL^2_x$ and
$r_N\in L^\infty_tL^\beta_x$. The smooth datum satisfies
$r_{0,\varepsilon}\in H^1(\Omega)\subset
(X,H^1(\Omega))_{1-1/\beta,\beta}$.
Denk--Hieber--Pr\"uss, Proposition~3.3, p.~9 of the same preprint,
applied with time exponent $\beta$ (extending the forcing by zero
past $T$), yields
\[
 \begin{aligned}
 &\|\partial_t r_N\|_{L^\beta(0,T;X)}
   +\|r_N\|_{L^\beta(0,T;H^1)}\\
 &\quad\le C_{\varepsilon,\beta,T,\Omega}\bigl(
 \|r_Nv_N\|_{L^\beta_tL^2_x}+\|r_N\|_{L^\beta_tX}
 +\|r_{0,\varepsilon}\|_{(X,H^1)_{1-1/\beta,\beta}}\bigr).
 \end{aligned}
\]
Thus $\nabla r_N\in L^\beta_tL^2_x$, with a bound depending on
$\varepsilon,\delta,\beta,T,\Omega$ and the prescribed data,
independently of $N$. Consequently,
$-v_N\cdot\nabla r_N-r_N\operatorname{div}v_N\in L^{4/3}(Q_T)$;
here use $v_N\in L^2_tL^4_x$, $r_N\in L^{2\beta}(Q_T)$ and
$\beta\ge6$. The preceding scalar $C^2$ theorem at exponent $4/3$
then gives $\partial_t r_N,\Delta r_N\in L^{4/3}(Q_T)$.

Momentum ranges are uniformly bounded in
$L^{2\beta/(\beta+1)}\Subset H^{-1}$. For each fixed Galerkin test,
\eqref{app:Galerkin} bounds its scalar time derivative in
$L^{2\beta/(\beta+2)}$, whose exponent exceeds one.
Coordinate Ascoli and compactness of the ranges therefore give
momentum convergence in $C_tH^{-1}$; no norm bound for Galerkin
projections is required. Strong--weak pairing identifies convection.
The density-square balance
\[
 \frac12\frac{d}{dt}\|r_N\|_2^2+
 \varepsilon\|\nabla r_N\|_2^2
 =-\frac12\int r_N^2\operatorname{div}v_N
\]
and $r_N^2\to r_\varepsilon^2$ in $L^2(Q_T)$ give convergence of the
weighted gradient norms. Hence $\nabla r_N\to\nabla r_\varepsilon$
strongly on $Q_{T'}$, $T'<T$, identifying the diffusion correction.
The construction on a larger time interval covers the prescribed $Q_T$.

To retain the initial energy, integrate \eqref{app:exactenergy} with
$\theta\ge0$ and $\Theta(s)=\int_s^T\theta(t)dt$:
\begin{equation}\label{app:weightedinner}
 \int_0^T\theta\widehat E_N
 +\int_0^T\Theta(D_N+\varepsilon A_N)
 =\widehat E_{0,\varepsilon}\int_0^T\theta,
\end{equation}
where $D_N=\int\mathbb S(v_N):\nabla v_N$ and
$A_N=4\int|\nabla\sqrt{r_N}|^2+(4\delta/\beta)\int|\nabla r_N^{\beta/2}|^2$.
Convex and quadratic lower semicontinuity gives the parabolic energy
inequality with the same initial right-hand side.

\emph{Removal of density diffusion at fixed $\delta$.}
The Bogovski\u\i\ test with
$\mathcal B[r_\varepsilon-\langle r_\varepsilon\rangle]$,
as in \cite[Lemma~3.1]{FNP}, gives
\begin{equation}\label{app:fixedgain}
 \int_{Q_T}p_\delta(r_\varepsilon)r_\varepsilon\le C_\delta,
 \qquad\sqrt\varepsilon\|\nabla r_\varepsilon\|_2\le C_\delta.
\end{equation}
The required planar bounds are $\mathcal B[r_\varepsilon-
\langle r_\varepsilon\rangle]\in L^\infty$ since $\beta>2$, and
$r_\varepsilon v_\varepsilon\in L^2(Q_T)$; inertia is controlled by
$\int_0^T\|r_\varepsilon\|_3^2\|v_\varepsilon\|_6^2dt$.
The two diffusion terms are $O_\delta(\sqrt\varepsilon)$.
Time cutoffs of bounded total variation cover $(0,T)$, and
\[
 \varepsilon\nabla r_\varepsilon\to0\text{ in }L^2,
 \qquad\varepsilon(\nabla r_\varepsilon\cdot\nabla)v_\varepsilon
       \to0\text{ in }L^1.
\]

Write $\overline p$ for the weak $L^{(\beta+1)/\beta}$ pressure limit.
The product $p_\delta(r_\varepsilon)r_\varepsilon$ initially has a
locally weak-* measure limit. The full-density effective-flux argument
of \cite[Lemmas~3.2--3.4]{FNP} gives, on interior tests,
\begin{equation}\label{app:innerflux}
 \overline{p_\delta(r)r}-\nu\,\overline{r\operatorname{div}v}
 =\overline p\,r-\nu r\operatorname{div}v,
 \qquad\nu=2\mu+\lambda.
\end{equation}
Its div--curl exponent $2\beta/(\beta+3)\ge4/3$ is compactly embedded
in planar $H^{-1}$; the density potential is locally bounded and
compact because $\beta>2$. Both diffusion errors in the potential
test vanish by \eqref{app:fixedgain} and the $L^2$ boundedness of
$\nabla\Delta^{-1}\operatorname{div}$.

The limiting continuity equation is renormalized since $r\in L^2$
and $\nabla v\in L^2$. Its initial value is retained by the extension
$r=r_0$, $v=0$ to negative times. For the logarithm, the parabolic
inequality is
\[
 \int_\Omega r_\varepsilon\log r_\varepsilon(\tau)
 +\int_0^\tau\!\int r_\varepsilon\operatorname{div}v_\varepsilon
 \le\int_\Omega r_{0,\varepsilon}\log r_{0,\varepsilon}.
\]
Convex terminal lower semicontinuity and the limiting logarithmic
identity therefore give
\[
 \limsup_{\varepsilon\downarrow0}\int_0^\tau\!\int
 r_\varepsilon\operatorname{div}v_\varepsilon
 \le\int_0^\tau\!\int r\operatorname{div}v
\]
at admissible terminal times. Monotonicity makes the pressure-product
defect nonnegative. Increasing interior cutoffs in
\eqref{app:innerflux}, with the uniform
$L^{2\beta/(\beta+2)}$ bound on
$r_\varepsilon\operatorname{div}v_\varepsilon$, and this limsup
inequality yield the Minty identification
$\overline p=p_\delta(r)$; see \cite[Section~3.5]{FNP}.
This exhaustion uses positivity rather than uniform integrability
of the pressure product. Weak pressure convergence tested by one,
together with weak density convergence, gives
$\int r_\varepsilon^\beta\to\int r^\beta$. Uniform convexity then yields
\[
 r_\varepsilon\to r\text{ in }L^\beta(Q_T),\qquad
 p_\delta(r_\varepsilon)\to p_\delta(r)\text{ in }L^1(Q_T).
\]

The weak equations retain $r_0,r_0v_0$. The quadratic renormalization
class follows by truncation, using
\[
 \|r^2\mathbf1_{\{r>R\}}\|_{L^2(Q_T)}^2
 \le R^{4-\beta}\|r\|_{L^\beta(Q_T)}^\beta\to0
\]
for the source paired with $\operatorname{div}v\in L^2$.
For transport, $r^2\mathbf1_{\{r>R\}}\to0$ in
$L^2_tL^{\beta/2}_x$ by time dominated convergence, paired with
$v\in L^2_tL^{\beta/(\beta-2)}_x$. The smooth bounded initial density
makes the initial tails vanish. The logarithm is regularized at zero;
the stated $C^2([0,\infty))$ class is already regular there.

Finally pass the parabolic energy inequality with the weights in
\eqref{app:weightedinner}, discard the nonnegative diffusion term,
and use $\widehat E_{0,\varepsilon}\to\widehat E_\delta(0)$.
Subtracting conserved mass and adding $|\Omega|$ gives
\eqref{app:fixedenergy}. The weak continuous density and momentum
representatives follow from their fixed high-power bounds and weak
equations with initial terms. Nested extractions on the intervals
$(0,L)$, $L=1,2,\ldots$, give the global solution, with all the stated properties.
\end{proof}

\subsection{Natural-energy dependence of the PW estimates}\label{appendix:PW}

\begin{proof}[Proof of Lemma~\ref{app:PW}]
At each fixed $n$, Proposition~\ref{app:fixed} supplies the quadratic
renormalizations and auxiliary integrability in
\cite[Proposition~3.1]{PW}. We trace the estimates in its Sections~4--6.
Write $(r,v)$ for one selected solution and set
\[
 \begin{gathered}
 M=\int_\Omega r_0,\qquad
 K=\operatorname*{ess\,sup}_t\int_\Omega r|v|^2,\qquad
 D=\int_{Q_T}|\nabla v|^2,\\
 P=\operatorname*{ess\,sup}_t\int_\Omega p_\delta(r),\qquad
 J=\|r_0v_0\|_1.
 \end{gathered}
\]
These quantities are uniform in $n$ by \eqref{app:initialenergy} and
\eqref{app:uniformenergy}; in particular $J\le\sqrt{MK_0}$.

\emph{Radon and negative half-derivative bounds.}
In \cite[Section~4]{PW}, the smoothed one-dimensional density
primitive is bounded by $C_\zeta M$. Endpoint momentum terms use
$C_\zeta M(J+\sqrt{MK})$; the inertia and pressure cutoff terms use
$C_\zeta TM(K+P)$, and viscous cutoff terms use
$C_\zeta M\sqrt{TD}$. The principal stress is bounded by
$C_\zeta\sqrt D\sqrt Z$, where $Z$ is the smoothed squared Radon norm.
The principal inertia remainder is nonnegative by the linewise
Cauchy--Schwarz inequality. Consequently,
\[
 Z\le C_{\zeta,T,\Omega}(M,K,D,P,J)+C_\zeta\sqrt D\sqrt Z.
\]
Young's inequality closes this bound uniformly in the line direction
and smoothing scale. Removing smoothing gives \eqref{app:Radon};
\cite[Lemma~2.1 and Proposition~5.1]{PW} then give
\eqref{app:halfderivative}.

\emph{Finite potential exponents.}
On a containing ball let $I_1$ denote the potential with kernel
$|x|^{-1}$, and put
$Q=I_1(\zeta r)$, $L=I_1(\zeta r|v|^2)$.
The half-derivative estimate gives $Q\in L^2_tL^4_x$ locally;
kinetic energy gives $L\in L^\infty_tL^s_x$ for $1<s<2$.
Each component $F$ of the momentum potentials in
\cite[Section~5.1, Lemmas~5.1--5.2]{PW} satisfies $|F|\le C\sqrt{QL}$, hence
$F\in L^4_tL^q_x$ for $q<8/3$. The Sobolev multiplication estimate
\cite[Lemma~2.2]{PW} also gives
$F\in L^1_tL^{4/(1+2\sigma)}_x$ for $0<\sigma<1/2$.
Interpolation with weights $2/3$ and $1/3$, choosing $q$ close to
$8/3$ and $\sigma$ small, therefore yields
\begin{equation}\label{appendix:PWpotential}
 F\in L^2_tL^q_x\quad\text{for every }2<q<3,
\end{equation}
with norms controlled by the same budget and localization.

\emph{Local pressure-density gain.}
Take $\psi(r)=(1+r)^\alpha-1$ in
\cite[Section~6]{PW}. Then
$|\psi(r)|+|r\psi'(r)-\psi(r)|\le C_\alpha r^\alpha$.
Mass and dissipation bound the renormalization source in
$L^2_tL^{2/(1+2\alpha)}_x$. The localized potential
$\mathcal H=\nabla\Delta^{-1}(\zeta\psi(r))$ satisfies
\[
 \|\mathcal H\|_{L^\infty_{t,x}}
 +\|\nabla\mathcal H\|_{L^\infty_tL^{1/\alpha}_x}\le C_{
 \zeta,\alpha,\Omega}(M),
\]
by finite-exponent potential estimates. Choose
$q=2/(1-2\alpha)<3$. The critical potential terms close using
\eqref{appendix:PWpotential} and
\[
 \frac1q+\frac12+\alpha=1.
\]
At $\alpha=1/8$ these exponents are $8/3$, $2$ and $8$,
while the source exponent is $8/5$. Cutoff pressure, inertia and
stress terms are controlled by $P,K,D$ and the preceding potential
bounds. The time endpoint terms satisfy
\[
 |\Gamma_T|\le C_\zeta\|\mathcal H\|_\infty
 \left(\|r_0v_0\|_1+\operatorname*{ess\,sup}_t\|rv(t)\|_1\right)
 \le C_\zeta\|\mathcal H\|_\infty(J+\sqrt{MK}).
\]
Thus no initial velocity Sobolev norm or density lower bound enters
the final constants.

Spatial mollification is removed at each fixed $n$ before asserting
uniformity. At this stage $rv\in L^2(Q_T)$, and the commutator
$[\zeta\psi(r)v]_h-[\zeta\psi(r)]_hv$ tends to zero in $L^2$;
this follows from strong mollifier convergence and time dominated
convergence, using $\psi(r)\in L^\infty_tL^4_x$ and
$v\in L^2_tL^4_x$. The bounded $L^2$ operator
$\nabla\operatorname{div}\Delta^{-1}$ therefore removes the remainder
in \cite[Section~6, (80), (99)--(100), pp.~26, 31--32]{PW}. This fixed-$n$ use of the
momentum norm introduces no outer-uniform high-power requirement.
Since $r^\alpha\le C_\alpha(1+\psi(r))$, the bound for
$\int\zeta^2p_\delta(r)\psi(r)$ and the pressure budget $TP$ give
\eqref{app:localgain}. Its constants
have precisely the dependence stated in Lemma~\ref{app:PW}.
\end{proof}

\section*{Acknowledgments}

The author is partially supported by NSF grant DMS-2510425 and by the Simons
Foundation grant MPS-TSM-00007824.

\section*{AI Use Disclosure}

During the preparation of this work, the author used ChatGPT (OpenAI)  to assist with language editing, checking notation and internal consistency, and LaTeX preparation. The author reviewed and edited any AI-generated output and takes full responsibility for the content of the article.

\end{document}